\newtheorem{theorem}{Theorem}[section]
\newtheorem{lemma}[theorem]{Lemma}
\newtheorem{proposition}[theorem]{Proposition}
\newtheorem{statement}[theorem]{Statement}
\theoremstyle{definition}
\newtheorem{corollary}[theorem]{Corollary}
\newtheorem{definition}[theorem]{Definition}
\newtheorem{example}[theorem]{Example}
\newtheorem{problem}[theorem]{Problem}
\newtheorem{remark}[theorem]{Remark}
\renewcommand{\L}{{\mathtt{L}}}
\def\C{{\mathcal C}}
\def\A{{\mathcal{A}}}
\def\B{{\mathcal{B}}}
\def\N{{\mathcal{N}}}
\def\F{{\mathbb{F}}}
\def\E{{\mathcal{E}}}
\def\T{{\mathcal{T}}}
\newcommand{\EN}{{\mathbf{N}}}
\newcommand{\Th}{{\mathrm{Th}}}
\newcommand{\Tr}{{\mathrm{T}}}
\newcommand{\At}{{\mathrm{At}}}
\newcommand{\Mod}{{\mathrm{Mod}}}
\newcommand{\id}{{\mathrm{id}}}
\newcommand{\K}{{\mathbf{K}}}
\renewcommand{\S}{{\mathbf{S}}}
\renewcommand{\P}{{\mathbf{P}}}
\newcommand{\Pw}{{\mathbf{P_{\!\! \omega}}}}
\newcommand{\Ps}{{\mathbf{P_{\!\! s}}}}
\newcommand{\Pf}{{\mathbf{P_{\! f}}}}
\newcommand{\Pu}{{\mathbf{P_{\!\! u}}}}
\newcommand{\Ld}{{\underrightarrow{\mathbf{L}}}}
\newcommand{\Ls}{{\underrightarrow{\mathbf{L}}_{s}}}
\newcommand{\Lo}{{\mathbf{L_{fg}}}}
\newcommand{\e}{{\mathbf{\,_e}}}
\newcommand{\pvar}{{\mathbf{Pvar}}}
\newcommand{\qvar}{{\mathbf{Qvar}}}
\newcommand{\ucl}{{\mathbf{Ucl}}}
\newcommand{\Res}{{\mathbf{Res}}}
\newcommand{\Dis}{{\mathbf{Dis}}}
\newcommand{\ac}{{\mathrm{ac}}}
\newcommand{\Tf}{{\mathrm{T}}}
\newcommand{\Hom}{{\mathrm{Hom}}}
\newcommand{\Rad}{{\mathrm{Rad}}}
\newcommand{\V}{{\mathrm{V}}}
\newcommand{\Mor}{{\mathrm{Mor}}}
\newcommand{\CA}{{\mathbf{CA}}}
\newcommand{\AS}{{\mathbf{AS}}}
\title{Algebraic geometry over algebraic structures II: Foundations}
\author{E.\,Daniyarova, A.\,Myasnikov, V.\,Remeslennikov}
\begin{document}
\maketitle

\begin{abstract}
MSC 03C99; 08A99; 14A99

In this paper we introduce elements of algebraic geometry over an
arbitrary algebraic structure. We prove so-called Unification
Theorems which describe coordinate algebras of algebraic sets in
several different ways.

\textit{Keywords:} algebraic structure, equation, algebraic set,
radical, coordinate algebra, the Zariski topology, irreducible
set, equationally Noetherian algebraic structure, universal
closure, quasivariety, complete atomic type, limit algebraic
structure.
\end{abstract}

\tableofcontents

\section{Introduction}\label{sec:introduction}

Quite often relations between sets of elements of a fixed
algebraic structure $\A$ can be described in terms  of equations
over $\A$. In the classical case when $\A$ is a field the area of
mathematics that studies such relations is {\it algebraic geometry}.
It is therefore reasonable to use the same
name in the general case.

Algebraic geometry over algebraic structures is a new
area of research in modern algebra. Nevertheless, there are already several
breakthrough results for particular algebraic structures as well as
an interesting development of a general theory.

To date, the most developed branch of algebraic geometry over
algebraic structures is  algebraic geometry over groups. Most
notable is the solution of the main problem of algebraic
geometry~--- the classification of algebraic sets and coordinate
groups in the case of free groups. The classification of
coordinate groups is given in the language of free constructions
and is a result of joint effort of many mathematicians. The most
important papers in this direction are by
R.\,C.\,Lyndon~\cite{Lyndon}, K.\,I.\,Appel~\cite{Appel},
R.\,Bryant~\cite{Bryant}, G.\,Makanin~\cite{Makanin},
A.\,Razborov~\cite{Razborov1, Razborov2}, R.\,I.\,Grigorchuk and
P.\,F.\,Kurchanov~\cite{GrigorchukKurchanov},
Z.\,Sela~\cite{Sela1, Sela2, Sela3}, A.\,Myasnikov,
V.\,Re\-mes\-len\-ni\-kov, D.\,Serbin~\cite{MR1, MRS, Remesl}. The
final results were obtained in a break-through series of papers by
O.\,Khar\-lam\-po\-vich and A.\,Myas\-ni\-kov~\cite{KM1, KM2, KM3,
KM4}.

A significant progress was made in algebraic geometry over free
metabelian groups~\cite{Chapuis, Rem2, Rem-Shtor2,
Rem-Shtor1,Rem-Rom1, Rem-Rom2, Rem-Tim, Rom1}. The case of
solvable groups was considered in~\cite{GRom, MRom, Rom2}. In the
last few years considerable progress has been made towards
understanding algebraic geometry over partially commutative
groups. Here we would like to mention the following
papers~\cite{CasKaz1, CasKaz2, CasKaz3, GTim, Misch2, Tim}.

Algebraic geometry over algebraic structures is also being
developed for algebraic structures other than groups. Nice results
were obtained in algebraic geometry over commutative monoids with
cancellation~\cite{MorShev, Shevl2, Shevl3}. A certain progress is
achieved in algebraic geometry over non-associative algebras,
namely over Lie algebras~\cite{ChSh, DKR1, DKR2, Daniyarova2,
DKR3, Daniyarova3, Rem-Shtor3, RomSh}, and over anti-commutative
algebras~\cite{DaniyarovaOnskul}.

Note that there are lots of papers on solving particular equations over particular algebras.
In this short introduction, we do not pretend to account for all these papers,
 rather we only mention those papers that demonstrate the importance and necessity of algebraic geometry over algebraic structures.

The accumulated analysis of the structure of algebraic sets and
coordinate algebras over particular algebraic structures (groups,
monoids, rings, algebras, etc.) creates a need for a general
framework. From this perspective, there are general results which
hold when one studies algebraic geometry over an arbitrary
algebraic structure, we refer to such results, and, more
generally, to such a viewpoint, as to the {\it universal algebraic
geometry}. Research in this area has been initiated in a series of
papers by B.\,I.\,Plotkin~\cite{Plot1,Plot2,Plot3}, G.\,Baumslag,
O.\,G.\,Kharlampovich, A.\,G.\,Myasnikov and
V.\,N.\,Remeslennikov~\cite{BMR1,KM1,KM2,MR2}.

Universal algebraic geometry, firstly, is a transfer of general
notions and ideas from algebraic geometry over particular
algebraic structures to the case of an arbitrary algebraic
structure; secondly, it is formulation and proof of general
results without the use of properties of a concrete algebraic
structure; thirdly, it is development of a general theory with its
own naturally arising problems and goals. One can  point out
several papers with general results for particular algebraic
structures. Most of the results in these papers are proven using
techniques and properties specific for the structures considered.
Universal algebraic geometry presents standard and universal means
of proving those results using the framework of universal algebra
and model theory.

\bigskip

This paper is the second in our series of papers on universal
algebraic geometry. In the first paper of this
series,~\cite{DMR1}, we present the background material from
universal algebra and model theory as needed for universal
algebraic geometry and discuss how model-theoretic notions and
ideas work in universal algebraic geometry. As this paper is a
continuation of the authors' previous paper~\cite{DMR1}, we
suggest the reader to consult that paper prior to reading this
one. For the sake of convenience, and in an attempt to make the
paper more self-contained, we present some of the more essential
notations and definitions from~\cite{DMR1} (see
Section~\ref{sec:preliminaries}).

The main aim of our previous paper~\cite{DMR1} is to prove the
 so-called Unification Theorems (Theorem~A and Theorem~B) which
give a description of coordinate algebras of irreducible algebraic
sets from several different viewpoints. Let us note that,
following R.\,Hartshorne~\cite{Hartshorne}, in our papers all
irreducible algebraic sets are non-empty.

\medskip

\noindent {\bf Theorem~A.} {\it Let $\A$ be an equationally
Noetherian algebraic structure in $\L$. Then for a finitely
generated algebraic structure $\C$ of $\L$ the following
conditions are equivalent:
\begin{enumerate}
\item [1)] $\Th_{\forall} (\A) \subseteq \Th_{\forall} (\C)$, i.e., $\C \in \ucl(\A)$;
\item [2)] $\Th_{\exists} (\A) \supseteq \Th_{\exists} (\C)$;
\item [3)] $\C$ embeds into an ultrapower of $\A$;
\item [4)] $\C$ is discriminated by $\A$;
\item [5)] $\C$ is a limit algebraic structure over $\A$;
\item [6)] $\C$ is an algebraic structure defined by a complete atomic type
in the theory $\Th _{\forall} (\A)$ in $\L$;
\item [7)] $\C$ is the coordinate algebra of an
irreducible algebraic set over $\A$ defined by a system of
equations in the language $\L$.
\end{enumerate}
}

\medskip

We begin current paper with a detailed exposition of the
foundations of universal algebraic geometry. In
Section~\ref{sec:elements} we introduce the basic notions of
algebraic geometry over an arbitrary algebraic structure $\A$:
equation over $\A$, algebraic set over $\A$, radical, coordinate
algebra, the Zariski topology, the notions of irreducible sets and
equationally Noetherian algebras.

The main results of this paper are the following theorems.

\medskip

\noindent {\bf Theorem}~\ref{dual}{\bf.} {\it The category $\AS
(\A)$ of algebraic sets over an algebraic structure $\A$ and the
category $\CA (\A)$ of coordinate algebras of algebraic sets over
$\A$ are dually equivalent.}

\medskip

\noindent {\bf Theorem~C.} {\it Let $\A$ be an equationally
Noetherian algebraic structure in a language $\L$. Then for a
finitely generated algebraic structure $\C$ of $\L$ the following
conditions are equivalent:
\begin{enumerate}
\item [1)] $\C \in \qvar(\A)$, i.e., $\Th_{\rm qi} (\A) \subseteq \Th_{\rm qi} (\C)$;
\item [2)] $\C \in \pvar(\A)$;
\item [3)] $\C$ embeds into a direct power of $\A$;
\item [4)] $\C$ is separated by $\A$;
\item [5)] $\C$ is a subdirect product of finitely many limit algebraic structures over $\A$;
\item [6)] $\C$ is an algebraic structure defined by a complete atomic type in the theory $\Th _{\rm qi} (\A)$ in $\L$;
\item [7)] $\C$ is the coordinate algebra of an algebraic set over $\A$ defined by a system of equations in the language $\L$.
\end{enumerate}
}

Theorem~C continues a series of Unification Theorems in algebraic
geometry that we have begun in~\cite{DMR1}. Theorem~A gives a
description of coordinate algebras of irreducible algebraic sets.
In classical algebraic geometry over a field, irreducible
algebraic sets determine the whole picture. Unlike the classical
case, in algebraic geometry over an arbitrary algebraic structure
$\A$ it is not so and here we need a description of all algebraic
sets and all coordinate algebras. Here Theorem~C is helpful.

Let us note that items~5) in Theorems~A~and~C give a description
of coordinate algebraic structures via limit algebraic structures.
Limit algebraic structures (for the most part, groups) become the
object of an intense study in modern algebra~\cite{CG, GS,
Groves1, Groves1-2, Groves2-1, Groves2, G1}. The definitions of a
limit algebraic structure and of an algebraic structure defined by
a complete atomic type require a lot of preliminary material and
are omitted in this paper (see~\cite[Subsections~4.2
and~5.1]{DMR1}).

In the previous~\cite{DMR1} paper and this one we suppose that a
language $\L$ is functional, i.e., it has no predicates. This
restriction is not a fundamental matter: all of proved here
results stay true in the case of an arbitrary signature $\L$.
However, if $\L$ has predicates then definitions of all notions
that we introduce will become more complicated, the volume of the
paper will become bigger, and a reader will be expected more
grounding to understand the paper. We will describe the case of an
arbitrary signature in an addition to this paper.

\medskip

Summarising, in our work we set up the foundation of universal
algebraic geometry. The presented material can be considered as a
guide for studying algebraic geometry over particular algebraic
structures. In Section~\ref{sec:problems} of the paper we present
several open problems in algebraic geometry over free monoids,
free Lie algebras and free associative algebras. Before applying
universal algebraic geometry to a particular group, ring, monoid
etc., we suggest to draw attention to the following remark.

There are three different segments of algebraic geometry over a
particular algebraic structure:
\begin{itemize}
\item[(i)] Coefficient-free algebraic geometry;
\item[(ii)] Diophantine algebraic geometry;
\item[(iii)] Algebraic geometry with
coefficients in some algebraic structure $\A$ and solutions in
some extension $\A < \B$ (usually, in some saturated model).
\end{itemize}
While laying the foundations of algebraic geometry over groups in
the papers~\cite{BMR1,MR2}, the authors choose the universal way
for explanation a material: they talk about algebraic geometry
over a group $H$ with coefficients in a given group $G$, $G\leq
H$. Along these lines, the notions of a $G$-group,
$G$-homomorphism, $G$-formula, etc. arise naturally. Obviously,
this approach is useful for all the three above mentioned segments
of algebraic geometry over groups: for coefficient-free algebraic
geometry (set $G=1$), for Diophantine algebraic geometry (set
$G=H$). The situation is the same in algebraic geometry over Lie
algebras~\cite{Daniyarova1}, monoids, rings, and so on. Note that
for semigroups and any other algebra without the trivial
subalgebra such ``universal'' approach does not work. However,
universal algebraic geometry provides an instrument for analysing
the three of the above segments in a uniform way, using one
technique: it just suffices to choose the ground language $\L$
appropriately. For instance, when  studying coefficient-free
algebraic geometry over a semigroup $G$, one should choose the
language $\L=\{\cdot\}$. For Diophantine algebraic geometry over
$G$ it is only natural to take the extended language $\L_G$ as the
ground language (see Section~\ref{sec:preliminaries} for the
definition of $\L_G$). Similarly, for algebraic geometry over a
semigroup $H$ with coefficients in $G$, $G\leq H$, the signature
$\L_G$ also works well.

Mathematical logic, model theory and universal algebra are the
background of universal algebraic geometry. Hence, it is only
natural that the choice of the ground language $\L$ plays a
crucial role in universal algebraic geometry, as all definitions
that we give depend on the ground language $\L$. Since when we
talk about algebraic structures, formulas, theories we always
assume that a certain language is fixed, so no confusion arises.
Therefore, when one considers our definitions and results in the
context of particular algebraic structures (e.g. groups, monoids,
algebras, etc.) it is necessary to point out the language in which
this group (monoid, algebra, etc.) is considered.

\section{Preliminaries}\label{sec:preliminaries}

In this section we present basic notations from model theory that
we use in this paper. For more detailed information we refer
to~\cite{DMR1, Gorbunov, Marker}.

Let $\L$ be a first-order functional language, $X = \{x_1, x_2,
\ldots, x_n\}$ a finite set of variables, $\Tr_\L(X)$ the set of
all terms of $\L$ with variables in $X$, $\T_\L(X)$ the absolutely
free $\L$-algebra with basis $X$ and $\At_{\L}(X)$ the set of all
atomic formulas of $\L$ with variables in $X$.

Typically we denote algebraic structures in $\L$ by capital
calligraphic letters $\A, \B, \C,\ldots$ and their universes (the
underlying sets) by the corresponding capital Latin letters $A, B,
C, \ldots$. Algebraic structures in a functional language are
termed {\em algebras}.

In this paper we use some operators which image a class $\K$ of
$\L$-algebras into another one. For the sake on convenience we
collect here the list of all these operators:\\
$\S (\K)$~--- the class of subalgebras of algebras from $\K$;\\
$\P (\K)$~--- the class of direct products of algebras from
$\K$;\\ $\Pw (\K)$~--- the class of finite direct products of
algebras from $\K$;\\
$\Ps (\K)$~--- the class of subdirect products of algebras from
$\K$;\\
$\Pf (\K)$~--- the class of filterproducts of algebras from
$\K$;\\
$\Pu (\K)$~--- the class of ultraproducts of algebras from $\K$;\\
$\Ld (\K)$~--- the class of direct limits of algebras from $\K$;\\
$\Ls (\K)$~--- the class of epimorphic direct limits of algebras
from $\K$;\\
$\Lo(\K)$~--- the class of algebras in which all finitely
generated subalgebras belong to $\K$;\\
$\pvar (\K)$~--- the least prevariety including $\K$;\\
$\qvar (\K)$~--- the least quasi-variety including $\K$, i.e.,
$\qvar (\K)=\Mod (\Th _{\rm qi} (\K))$;\\
$\ucl (\K)$~--- the universal class of algebras generated by $\K$,
i.e., $\ucl (\K)=\Mod (\Th _{\forall} (\K))$; \\$\Res (\K)$~---
the class of algebras which are separated by $\K$;
\\$\Dis (\K)$~--- the class of algebras which are discriminated by
$\K$;
\\
$\K\e$~--- the addition of the trivial algebra $\E$ to $\K$, i.e., $\K\e=\K\cup \{\E\}$;\\
$\K_\omega$~--- the class of finitely generated algebras from
$\K$.

Here we denote by $\Th_{\rm qi}(\K)$ (correspondingly,
$\Th_{\forall}(\K)$, $\Th_{\exists}(\K)$) the set of all
quasi-identities (correspondingly, universal sentences,
existential sentences) which are true in all structures from $\K$.

For an arbitrary class $\K$ of $\L$-algebras one has:
$$
\Dis(\K)\;\; \subseteq \;\; \ucl(\K)=\S\Pu(\K) \;\; \subseteq \;\;
\qvar(\K),
$$
$$
\Dis(\K) \;\; \subseteq \;\; \Res(\K)=\S\P(\K)=\pvar(\K) \;\;
\subseteq \;\; \qvar(\K).
$$

According to Gorbunov~\cite{Gorbunov} and in contrast
to~\cite{DMR1}, we assume that the direct product for the empty
set of indexes coincides with the trivial $\L$-algebra $\E$. In
particularly, $\E \in \P(\K)$, $\E \in \Pw(\K)$, $\E \in \Ps(\K)$,
for an arbitrary class of $\L$-algebras $\K$. However, while
defining an filterproduct we assume that the set of indexes is
non-empty.

Let us remind the definitions of separation and discrimination.

\begin{definition}
An $\L$-algebra $\C$ is {\it separated} by a class of
$\L$-algebras $\K$ if for any pair of non-equal elements $c_1, c_2
\in C$ there is a homomorphism $h \colon \C \to \B$ for some
$\B\in \K$, such that $h(c_1)\ne h(c_2)$.
\end{definition}

\begin{definition}
An $\L$-algebra $\C$ is {\em discriminated} by $\K$ if for any
finite set $W$ of elements from $C$ there is a homomorphism $h
\colon\C \to \B$ for some $\B\in \K$, whose restriction onto $W$
is injective.
\end{definition}

If $\C$ is separated (discriminated) by a class $\K=\{\B\}$, then
we say that $\C$ is separated (discriminated) by algebra $\B$. Let
us note that in the definitions above we do not claim a
homomorphism $h$ is an epimorphism. As it follows from the
definitions of separation and discrimination, the trivial algebra
$\E$ is separated by a class $\K$ anyway, and $\E$ is
discriminated by $\K$ if and only if there exists a homomorphism
$h \colon\E \to \B$ for some $\B\in \K$, i.e., $\B$ has a trivial
subalgebra. This circumstance allows to reserve inclusion $\Dis
(\K) \subseteq \ucl (\K)$ and identity $\Res (\K)=\S\P(\K)$ in a
habitual form.

\medskip

Unification Theorem~A has been proven in~\cite{DMR1} for
equationally Noetherian algebras. However, the following result
holds in more general case too.

\begin{proposition}[\cite{DMR1}]\label{logirr}
Let $\A$ be an algebra in a language $\L$. Then for a finitely
generated $\L$-algebra $\C$ the following conditions are
equivalent:
\begin{itemize}
\item $\Th_{\forall} (\A) \subseteq \Th_{\forall} (\C)$, i.e., $\C \in \ucl(\A)$;
\item $\Th_{\exists} (\A) \supseteq \Th_{\exists} (\C)$;
\item $\C$ embeds into an ultrapower of $\A$;
\item $\C$ is a limit algebra over $\A$;
\item $\C$ is an algebra defined by a complete atomic type in the theory $\Th _{\forall} (\A)$ in
$\L$.
\end{itemize}
\end{proposition}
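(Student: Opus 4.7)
The plan is to establish the equivalences along the route $(1) \Leftrightarrow (2) \Leftrightarrow (3)$, then $(1) \Leftrightarrow (5)$, and finally $(3) \Rightarrow (4) \Rightarrow (1)$, which isolates the purely model-theoretic content from the structural characterizations borrowed from \cite{DMR1}. This avoids routing anything through discrimination or coordinate-algebra characterizations, both of which typically require the equationally Noetherian hypothesis and are precisely what is dropped in comparison with Theorem~A.

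The first block is formal. The equivalence $(1) \Leftrightarrow (2)$ is a consequence of the duality between universal and existential sentences: $\phi$ is universal iff $\neg\phi$ is logically equivalent to an existential sentence, so $\Th_{\forall}(\A) \subseteq \Th_{\forall}(\C)$ iff $\Th_{\exists}(\C) \subseteq \Th_{\exists}(\A)$. The equivalence $(1) \Leftrightarrow (3)$ is precisely the identity $\ucl(\K) = \S\Pu(\K)$ already recorded in the preliminaries, i.e., the Keisler--Shelah-style description of universal classes as subalgebras of ultrapowers.

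For $(1) \Rightarrow (5)$, I would fix generators $c_1, \ldots, c_n$ of $\C$ and take $p(\bar x)$ to be the set of all atomic and negated atomic formulas $\phi(\bar x)$ with $X = \{x_1, \ldots, x_n\}$ that are satisfied by $(c_1, \ldots, c_n)$ in $\C$. Since $\C$ is finitely generated, $p$ is a complete atomic type, and by $(1)$ every finite conjunction drawn from $p$ must be consistent with $\Th_{\forall}(\A)$~--- otherwise its negation would provide a universal sentence in $\Th_{\forall}(\A) \setminus \Th_{\forall}(\C)$. Then $\C$ is, up to isomorphism, the algebra defined by $p$ in the sense of \cite[Subsection~5.1]{DMR1}. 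Conversely, for $(5) \Rightarrow (1)$, any universal sentence $\forall \bar y\, \psi(\bar y)$ true in $\A$ is consistent with every complete atomic type of $\Th_{\forall}(\A)$, so it passes to the algebra defined by $p$ by substitution on a generating tuple.

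Finally, for $(3) \Rightarrow (4)$ I would view $\C$ inside an ultrapower $\A^I / D$ and present it as an appropriate direct limit of finitely generated subalgebras whose images can be lifted back to $A^I$ in such a way that on cofinitely many coordinates the lift is an embedding into $\A$; this is exactly the form of a limit algebra over $\A$ recalled in \cite[Subsection~4.2]{DMR1}. The reverse direction $(4) \Rightarrow (1)$ is immediate, since the defining direct-limit construction of a limit algebra preserves each universal sentence valid in $\A$. The main obstacle is this last block: carrying out the $(3) \Rightarrow (4)$ construction explicitly, without leaning on the equationally Noetherian hypothesis available in Theorem~A. All other steps reduce to standard manipulations once the definitions of items $(4)$ and $(5)$ from the companion paper are made fully explicit.
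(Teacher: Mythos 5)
First, a caveat: this paper does not actually prove Proposition~\ref{logirr}; it is imported from~\cite{DMR1} with no argument given here, so there is no in-paper proof to compare yours against. Judged on its own terms, your overall architecture is sound and the formal blocks are correct: $(1)\Leftrightarrow(2)$ is the universal/existential duality, $(1)\Leftrightarrow(3)$ is the identity $\ucl(\A)=\S\Pu(\A)$ already recorded in Section~\ref{sec:preliminaries}, and your treatment of $(1)\Leftrightarrow(5)$ via the complete atomic type of a generating tuple is essentially the intended one. (For $(5)\Rightarrow(1)$ the clean way to finish is: consistency of $p$ with $\Th_\forall(\A)$ gives a model $\M\models\Th_\forall(\A)$ realizing $p$ at some tuple; completeness of the atomic type forces the subalgebra of $\M$ generated by that tuple to be isomorphic to $\C$; and $\ucl(\A)=\Mod(\Th_\forall(\A))$ is closed under subalgebras. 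Your phrase ``passes to the algebra defined by $p$ by substitution'' should be replaced by this.)

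The genuine gap is $(3)\Rightarrow(4)$, which you yourself flag as ``the main obstacle'' and leave unexecuted; moreover the sketch you do give points in the wrong direction. A limit algebra over $\A$ in the sense of~\cite{DMR1} is the algebra determined by a congruence on $\T_\L(X)$ (equivalently, a complete atomic type) lying in the closure of the set of kernels of homomorphisms $\T_\L(X)\to\A$ in the product topology; there is no requirement that anything embed into $\A$, and in general nothing will. The correct argument is short and uses only the ultrafilter: write $\C=\langle c_1,\dots,c_n\rangle\le\A^I/D$, lift each $c_j$ to $(a_j^{(i)})_{i\in I}$, and let $h_i\colon\T_\L(X)\to\A$ send $x_j\mapsto a_j^{(i)}$. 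By {\L}o\'{s}-type reasoning for atomic formulas, the kernel $\theta$ of the presentation map $\T_\L(X)\to\C$ satisfies $t\sim_\theta s$ iff $\{i: h_i(t)=h_i(s)\}\in D$; hence for any finite set of pairs of terms, the set of indices $i$ at which $\ker h_i$ agrees with $\theta$ on all those pairs is a finite intersection of members of $D$ (or their complements' complements), hence lies in $D$ and is non-empty. Thus $\theta$ is a limit of the $\ker h_i$, i.e., $\C$ is a limit algebra over $\A$ --- with no ``embedding into $\A$ on cofinitely many coordinates'' anywhere, and no Noetherian hypothesis. The same finite-agreement mechanism is what rescues your $(4)\Rightarrow(1)$ step, which as stated (``direct limits preserve universal sentences'') is too loose: a universal sentence true in $\A$ holds in $\T_\L(X)/\theta$ because any tuple of its elements and any quantifier-free matrix involve only finitely many atomic formulas, whose truth values are matched by some homomorphism into $\A$.
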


\medskip

For $\L$-algebra $\A$ we denote by $\L_\A = \L \cup\{c_a \mid a
\in A\}$ the language $\L$ extended by elements from $\A$ as new
constant symbols.

An algebra $\B$ in the language $\L_\A$ is called $\A$-algebra if
$\B \models {\rm Diag}(\A)$. It means that $\A$ embeds into $\B$
and the corresponding embedding $\lambda\colon \A \to \B$ is
fixed. Remind that the diagram ${\rm Diag}(\A)$ of $\A$ is the set
of all atomic sentences from $\At_{\L_\A}(\emptyset)$ or their
negations which are true in $\A$.

Let $\B$ and $\C$ be $\A$-algebras, and $h\colon \B \to \C$ a
$\L_\A$-homomorphism. We usually refer to $h$ as to
$\A$-homomorphism. Similarly, we define $\A$-separation and
$\A$-discrimination. The prevariety of $\B$ in the language
$\L_\A$ we denote by $\pvar_\A(\B)$, the quasivariety~--- by
$\qvar_\A(\B)$, and the universal closure~--- by $\ucl_\A(\B)$.
Such notations is especially convenient when $\B=\A$. In this case
we have to distinguish $\A$ as $\L$-algebra and $\A$ as
$\L_\A$-algebra. Correspondingly, for instance, we have to point
out what class we concerned with: $\ucl (\A)$ or $\ucl_\A(\A)$.

\section{Elements of algebraic geometry}\label{sec:elements}

Let $\L$ be a functional language and $\A$ an $\L$-algebra.

In this section we introduce the basic notions of universal
algebraic geometry: equation in the language $\L$, algebraic set
over the algebra $\A$, radical, coordinate algebra, the Zariski
topology, irreducible set, equationally Noetherian algebra.

\subsection{Equations and algebraic sets} \label{subsec:alg_sets}

Let $X=\{x_1,\ldots,x_n\}$ be a finite set of variables.

\begin{definition}
Atomic formulas from $\At_{\L}(X)$ are called {\em equations} in
$\L$ with variables in $X$. Any subset $S\subseteq \At_{\L}(X)$ is
called a {\em system of equations} in $\L$.
\end{definition}

Sometimes, to emphasize that formulas are from $\L$ we call such
equations (and systems of equations) {\em coefficient-free
equations}, meanwhile, in the case  when $\L = \L_\A$, we refer to
such equations as {\em equations with coefficients in algebra
$\A$} or $\A$-{\em equations}.

When someone looks for solutions of equations and systems of
equations in algebra $\A$ it is said to be {\em algebraic geometry
over the algebra $\A$}. Algebraic geometry over an algebra $\A$ in
the language $\L_\A$ is called {\em Diophantine}. If $\B$ is an
$\A$-algebra then investigation into algebraic geometry over $\B$
as over $\L_\A$-algebra is called {\em algebraic geometry over
$\B$ with coefficients in $\A$}.

We term the set
$$
A^n = \{(a_1,\ldots,a_n) \mid a_i \in A\}
$$
{\em affine $n$-space} over algebra $\A$ and we sometimes refer to
its elements as {\em points}. A point $p=(a_1,\ldots,a_n)\in A^n$
is called a {\em root} of equation $(t_1=t_2)$, $t_1,t_2 \in
\Tr_\L (X)$, if $\A \models (t_1=t_2)$ via interpretation $x_i
\mapsto a_i$, $i=\overline{1,n}$. Further, a point $p$ is a root
of system of equations $S\subseteq \At_{\L}(X)$ if it is a root of
every equation from $S$.

\begin{definition}
Let $S$ be a system of equations in the language $\L$ in variables
$X$. The set of all roots of the system $S$ in the affine
$n$-space $A^n$ we denote by $\V _{\A}(S)$ (or briefly $\V (S)$):
$$
\V_{\A}(S)\; =\; \{\,(a_1,\ldots,a_n)\in A^n \; \vert \;
t^{\A}_1(a_1,\ldots,a_n)=t^{\A}_2(a_1,\ldots,a_n) \quad \forall \;
(t_1=t_2)\in S \,\}
$$
The set $\V _{\A}(S)$ is called {\em algebraic set} over the
algebra $\A$ defined by the system $S$.
\end{definition}

A system $S$ is called {\em inconsistent} over $\A$ if $\V _{\A}
(S) = \emptyset$, otherwise it is called {\em consistent}. We say
two systems of equations $S_1$ and $S_2$ are {\em equivalent} over
$\A$ and write $S_1 \sim _\A S_2$ if $\V _{\A} (S_1) = \V _{\A}
(S_2)$.

\begin{example}\label{ex:sing}
Any points, affine $n$-spaces, and direct products of algebraic
sets give standard examples of algebraic sets.
\begin{enumerate}
\item In Diophantine algebraic geometry over an algebra $\A$, every
point from the affine $n$-space $A^n$ is an algebraic set. For a
point $(a_1,\ldots,a_n) \in A^n$ one has
$$
S = \{x_1 = c_{a_1}, \ldots, x_n = c_{a_n}\}, \quad \V  (S) =
\{(a_1, \ldots, a_n)\}.
$$
\item The affine $n$-space $A^n$ is an algebraic set for the degenerate
system $S = \{x=x\}$.
\item Let $Y\subseteq A^n$ and $Z\subseteq A^m$ be algebraic sets
over $\A$. Then $Y\times Z \subseteq A^{n+m}$ is an algebraic over
$\A$ too. Indeed, if $Y=\V(S)$, $S\subseteq
\At_\L(x_1,\ldots,x_n)$, and $Z=\V(S')$, $S'\subseteq
\At_\L(x'_1,\ldots,x'_m)$, then
$$
Y\times Z=\V (S\cup S'), \quad S\cup S'\subseteq
\At_\L(x_1,\ldots,x_n,x'_1,\ldots,x'_m).
$$
\end{enumerate}
\end{example}

Now we show some examples of algebraic sets over specific
algebras: free group, free Lie algebra, min-max structure.

\begin{example}
Let us consider a free algebra $F$ with a free base
$a_1,\ldots,a_n$ in some variety $\Theta$ and the following
equation over $F$:
\begin{equation}\label{eq:w}
w(x_1,\ldots,x_n)=w(a_1,\ldots,a_n),
\end{equation}
where $w(a_1,\ldots,a_n)$ is an element in $F$. Then algebraic set
for equation~\eqref{eq:w} is
$$
Y=\{(\varphi (a_1),\ldots,\varphi(a_n))\: \mid \; \varphi \in
\mathrm{St} (w)\},
$$
wherehere $\mathrm{St} (w)=\{\varphi \in \mathrm{End} (F)\mid
\varphi (w)=w\}$. So, the algebraic set $Y$ is parametrized by
endomorphisms from the stabilizer $\mathrm{St} (w)$. Remind that
an element $w(a_1,\ldots,a_n)\in F$ is termed testing if
$\mathrm{St} (w) \subseteq \mathrm{Aut} (F)$. Thus, if $w$ is a
testing element then the algebraic set $Y$ has a parametrization
by means of automorphisms.

If $F$ is a finitely generated free group then there exists an
effective algorithm to determine $\mathrm{St} (w)$ for any
(cyclic) testing element $w\in F$~\cite{McCool}. Therefore, there
exists an effective algorithm for finding all solutions of
equation~\eqref{eq:w}.

In particular, the stabilizator $\mathrm{St} (w)$ of the
commutator $w=[a_1,a_2]$ in $F=\langle a_1, a_2 \rangle$ is
isomorphic to the free group of rank $2$. Indeed, $\mathrm{St} (w)
= \langle tr_1, tr_2\rangle$, where $tr_1$ is the automorphism of
$F$ defined by $a_1 \to a_2 a_1$, $a_2\to a_2$, and $tr_2$ is
defined by $a_1\to a_1$, $a_2 \to a_1 a_2$.
\end{example}

\begin{example}\label{Lie}
Let $L$ be a free Lie algebra of a finite rank over a field $k$.
An algebraic set $Y$ is called {\em bounded} if it enters into
some finite dimensional subspace of $L^n$ as $k$-linear space.
Bounded algebraic sets over $L$ have been classified
in~\cite{Daniyarova3}. Any finite dimensional subspace in $L$
supplies an elemental example of bounded algebraic set. For linear
subspace $W$ in $L$ with basis $v_1,\ldots,v_m$ we have:
$$
s_1(x)=[x,v_1], \; s_2(x)=[[x,v_1],[v_2,v_1]], \; \ldots, \;
s_m(x)=[s_{m-1}(x),s_{m-1}(v_m)],
$$
$$
\V (s_m)=W.
$$
While $W$ is an algebraic set in one variable $x$, the similar
algebraic sets in $n$ variables are called $n$-parallelepipeds.
Under $n$-parallelepiped $\mathbf{W}$ we mean a Cartesian product
of an $n$-tuple of finite dimensional subspaces $W_1,\ldots, W_n$
in $L$: $ \mathbf{W}=W_1 \times \ldots \times W_n$. The dimension
of $n$-parallelepiped $\mathbf{W}$ is defined by $\mathrm{dim}
(\mathbf{W})=\mathrm{dim} (W_1)+\ldots+\mathrm{dim}(W_n)$. An
algebraic set $Y\subseteq L^n$ is bounded by parallelepiped
$\mathbf{W}$ if $Y\subseteq\mathbf{W}$.

\medskip

\noindent {\bf Theorem}~\cite{Daniyarova3}{\bf.} {\em Let
$\mathbf{W}$ be an $n$-parallelepiped over the free Lie algebra
$L$ over a field $k$. Algebraic sets over the algebra $L$ bounded
by the parallelepiped $\mathbf{W}$ lie in one-to-one
correspondence with algebraic sets over the field $k$ defined by
systems of equations in $\mathrm{dim} (\mathbf{W})$ variables.}

In~\cite{Rem-Shtor3} it has been shown that the equation
$[x,a]+[y,b]=0$ ($a,b \in L$, $[a, b]\ne 0$) has a complicated
solution over $L$ and its algebraic set is not bounded. However,
the same equation is easy to solve over a free anti-commutative
algebra $A$; and its algebraic set over $A$ is
bounded~\cite{DaniyarovaOnskul}.
\end{example}

\begin{example}\label{ex:minmax}
The algebraic structure $\mathcal{M}_\mathbb{R}=\langle
\mathbb{R}; {\max}, {\min}, {\cdot}, \allowbreak {+}, {-}, 0,
1\rangle$ with obvious interpretation of the symbols from
signature on $\mathbb{R}$ is an example of so-called min-max
structure.

\medskip

\noindent {\bf Theorem}~\cite{DvorKot}{\bf.} {\em A set
$Y\subseteq \mathbb{R}^n$ is algebraic over
$\mathcal{M}_\mathbb{R}$ if and only if it is closed in the
topology induced by the Euclidean metric on $\mathbb{R}^n$.}
\end{example}

The following two lemmas will be helpful further; they shows how
equivalence between systems of equations retains when we move from
algebra $\A$ to its direct and filter- powers.

\begin{lemma}
Let $\A$ be an $\L$-algebra and $\C$ a subalgebra of some direct
power of $\A$. Then for two system of equations
$S_1,S_2\subseteq\At_\L(X)$ condition $S_1\sim_\A S_2$ implies
$S_1\sim_\C S_2$.
\end{lemma}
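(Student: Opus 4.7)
The plan is to reduce the statement to the coordinate-wise computation of terms in a direct power. Since $\C$ is a subalgebra of some direct power $\A^I$, every element $c \in C$ is a family $c = (a^j)_{j\in I}$ with $a^j \in A$, and for any term $t \in \Tr_\L(X)$ and tuple $(c_1,\ldots,c_n) \in C^n$ one has
$$
t^\C(c_1,\ldots,c_n) \;=\; \bigl( t^\A(a_1^j,\ldots,a_n^j) \bigr)_{j \in I},
$$
because evaluation in a subalgebra agrees with evaluation in the ambient algebra, and evaluation in a direct product is coordinate-wise. This is the only structural fact needed.

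From this, for any equation $(t_1 = t_2) \in \At_\L(X)$, a point $p = (c_1,\ldots,c_n) \in C^n$ satisfies $(t_1 = t_2)$ in $\C$ if and only if for every $j \in I$ the projected point $p^j := (a_1^j,\ldots,a_n^j) \in A^n$ satisfies $(t_1=t_2)$ in $\A$. Consequently, for a system $S \subseteq \At_\L(X)$,
$$
p \in \V_\C(S) \iff p^j \in \V_\A(S) \text{ for every } j \in I.
$$

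Now assume $S_1 \sim_\A S_2$, i.e., $\V_\A(S_1) = \V_\A(S_2)$. Applying the above to both systems, we get for every $p \in C^n$:
$$
p \in \V_\C(S_1) \iff \forall j\; p^j \in \V_\A(S_1) \iff \forall j\; p^j \in \V_\A(S_2) \iff p \in \V_\C(S_2).
$$
Hence $\V_\C(S_1) = \V_\C(S_2)$, which is exactly $S_1 \sim_\C S_2$.

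There is no real obstacle here; the whole content of the lemma is the fact that atomic formulas are preserved and reflected both by subalgebra inclusion and by direct products (atomic formulas being, in particular, positive primitive). One could simply invoke this model-theoretic fact and conclude that $S_1 \sim_\A S_2$ passes to every subalgebra of every direct power of $\A$, but the coordinate-wise computation above makes the argument transparent without any additional machinery.
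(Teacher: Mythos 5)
Your proof is correct and follows essentially the same route as the paper's: both arguments write each coordinate of a point of $C^n$ as a family indexed by $I$, observe that a point satisfies an equation in $\C$ if and only if each of its projections satisfies it in $\A$, and then transfer the equality $\V_\A(S_1)=\V_\A(S_2)$ coordinatewise. Your version merely spells out the coordinate-wise evaluation of terms more explicitly, which the paper leaves implicit.
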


\begin{proof}
Let $\C$ be a subalgebra of $\prod_{i\in I}{\A^{(i)}}$. For a
point $p=(c_1,\ldots,c_n)\in C^n$ let us write
$c_j=(a_j^{(i)})_{i\in I}$, $a_j^{(i)}\in A$, $j=1,\ldots,n$. We
have $p\in \V_\C(S_1)$ if and only if
$(a^{(i)}_1,\ldots,b^{(i)}_n)\in \V_\A (S_1)$ for every $i\in I$.
Since $S_1\sim_\A S_2$ the latter is equal to
$(a^{(i)}_1,\ldots,a^{(i)}_n)\in \V_\A (S_2)$ for every $i\in I$.
Therefore, $p\in \V_\C(S_1)$ if and only if $p\in \V_\C(S_2)$,
i.e., $S_1\sim_\C S_2$.
\end{proof}

\begin{lemma}\label{up}
Let $\A$ be an $\L$-algebra and $\C$ a subalgebra of some
filterpower of $\A$. Then for a system of equations
$S\subseteq\At_\L(X)$ and a finite subsystem $S_0 \subseteq S$
condition $S\sim_\A S_0$ implies $S\sim_\C S_0$.
\end{lemma}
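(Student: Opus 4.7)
The plan is to follow the same strategy as the preceding lemma, but this time exploiting the finiteness of $S_0$ in order to intersect only finitely many ``agreement sets'' and stay inside the filter. One containment is free: since $S_0\subseteq S$, we have $\V_\C(S)\subseteq\V_\C(S_0)$, so only the reverse inclusion needs work.

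Write $\C\leq \A^I/\mathcal{F}$ for some filter $\mathcal{F}$ on a nonempty index set $I$. Fix a point $p=(c_1,\ldots,c_n)\in\V_\C(S_0)$ and choose representatives $c_j=[(a_j^{(i)})_{i\in I}]_\mathcal{F}$. For each equation $(t_1=t_2)$ in the language, set
$$
I_{(t_1,t_2)} \;=\; \bigl\{\,i\in I \;\bigl|\; t_1^\A(a_1^{(i)},\ldots,a_n^{(i)})=t_2^\A(a_1^{(i)},\ldots,a_n^{(i)})\,\bigr\}.
$$
The definition of the filterpower gives: $t_1^\C(c_1,\ldots,c_n)=t_2^\C(c_1,\ldots,c_n)$ if and only if $I_{(t_1,t_2)}\in\mathcal{F}$. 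By assumption $p\in\V_\C(S_0)$, so $I_{(t_1,t_2)}\in\mathcal{F}$ for every $(t_1=t_2)\in S_0$.

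Now use that $S_0$ is finite. The intersection
$$
I_0 \;=\; \bigcap_{(t_1=t_2)\in S_0} I_{(t_1,t_2)}
$$
is then a finite intersection of members of $\mathcal{F}$, so $I_0\in\mathcal{F}$. For every $i\in I_0$, the tuple $(a_1^{(i)},\ldots,a_n^{(i)})\in A^n$ is a root of $S_0$, and hence, by the hypothesis $S_0\sim_\A S$, a root of the full system $S$. Consequently, for an arbitrary equation $(t_1=t_2)\in S$ we have $I_0\subseteq I_{(t_1,t_2)}$, so $I_{(t_1,t_2)}\in\mathcal{F}$ as well. This proves $p\in\V_\C(S)$, completing the reverse inclusion.

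The only place where the argument could fail is the intersection step: if $S_0$ were infinite the sets $I_{(t_1,t_2)}$ for $(t_1=t_2)\in S_0$ would still lie in $\mathcal{F}$, but their intersection need not, so the whole reduction to a single filter set $I_0$ would break down. The finiteness hypothesis on $S_0$ is precisely what the proof exploits, and the rest of the argument is a direct transcription of the previous lemma's reasoning from direct powers to filterpowers.
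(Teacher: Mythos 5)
Your argument is correct and is essentially the paper's own proof: both reduce to the easy inclusion $\V_\C(S)\subseteq\V_\C(S_0)$, then use the finiteness of $S_0$ to intersect the finitely many agreement sets (each in the filter) into a single filter set $J_0$ on which the coordinate tuples are roots of $S_0$, hence of $S$ by the hypothesis $S\sim_\A S_0$. No gaps; the explicit remark about where infiniteness of $S_0$ would break the argument matches the intended role of the hypothesis.
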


\begin{proof}
Suppose $S\sim_\A S_0$ and $\C$ is a subalgebra of $\prod_{i\in I}
{\A ^{(i)}}/D$, where $D$ is a filter on $I$. Since inclusion
$\V_\C(S_0)\supseteq \V_\C(S)$ is obvious we need to prove only
the converse inclusion $\V_\C(S_0) \subseteq \V_\C(S)$.

Assume that a point $p=(c_1,\ldots,c_n)\in C^n$ is a root of
$S_0$. We write $c_j=(a_j^{(i)})_{i\in I}/D$, $j=\overline{1,n}$,
and denote $p^{(i)}=(a_1^{(i)},\ldots,a_n^{(i)})\in A^n$. For each
equitation $(t=s)\in S_0$ there exists an element $J$ in $D$ such
that $t(p^{(i)})=s(p^{(i)})$ for all $i\in J$. Since $S_0$ is
finite there is an element $J_0 \in D$ such that
$t(p^{(i)})=s(p^{(i)})$ for each equation $(t=s)\in S_0$ and each
index $i \in J_0$. As $\V_\A(S_0)=\V_\A(S)$ we have
$t(p^{(i)})=s(p^{(i)})$ for all $i \in J_0$ and every equation
$(t=s)\in S$. Hence, the point $p$ is a solution of $S$. Thus,
inclusion $\V_\C(S_0) \subseteq \V_\C(S)$ holds.
\end{proof}

\subsection{Radicals} \label{subsec:radicals}

With every algebraic set $Y$ we associate two important
objects~--- its radical $\Rad(Y)$ and coordinate algebra $\Gamma
(Y)$. In this subsection we will discuss radicals, and in the next
one we will discuss coordinate algebras.

\begin{definition}
For a subset $Y\subseteq A^n$ we term the following set of atomic
formulas from $\At_{\L} (x_1,\ldots,x_n)$ the {\em radical of the
set $Y$}:
$$
\Rad (Y) \; = \; \{\, (t_1=t_2) \; \vert \;
t^{\A}_1(a_1,\ldots,a_n)=t^{\A}_2(a_1,\ldots,a_n) \quad \forall \;
(a_1,\ldots,a_n)\in Y \, \}.
$$
\end{definition}

The radical $\Rad (Y)$ of an algebraic set $Y$ uniquely defines
it, i.e., an algebraic set $Y_1$ coincides with an algebraic set
$Y_2$ if and only if $\Rad (Y_1)=\Rad (Y_2)$ (see Lemma~\ref{rad}
below).

The {\em radical of a system of equations $S\subseteq \At_\L(X)$}
over an $\L$-algebra $\A$ is the set $\Rad (\V_\A (S))$. We denote
it $\Rad _{\A}(S)$ (or briefly $\Rad (S)$). Atomic formulas from
$\Rad _\A (S)$ are called {\em consequences} of the system $S$
over $\A$. So, atomic formula $(t_1=t_2)$ is a consequence of $S$
over $\A$ if and only if $\V(S) \subseteq \V(\{t_1=t_2\})$, i.e.,
$S \cup \{t_1=t_2\}\sim_{\A}S$. In other words, $\Rad (S)$ is the
maximal system of equations which is equivalent to $S$. Radical of
an inconsistent system $S$ coincides with $\At_{\L}(X)$.

By $[S]$ we denote the congruent closure of $S$, i.e., the least
congruent subset of $\At_{\L}(X)$, containing $S$~\cite{DMR1}. In
is obvious that $[S]\subseteq \Rad(S)$.

\begin{definition}
We name a subset $S\subseteq \At_{\L}(X)$ by {\em radical ideal}
over $\A$ if $S=\Rad_\A(Y)$ for some $Y\subseteq A^n$.
\end{definition}

One can consider $\Rad$ and $\V$ as operators. Thus, $\Rad$ is the
operator of calculation of radical for sets $Y\subseteq A^n$, and
$\V$ is the operator of calculation of algebraic sets for systems
$S \subseteq \At_{\L} (X)$. In the next lemma we gather elementary
properties of these operators.

\begin{lemma}\label{rad}
The following holds:
\begin{enumerate}
\item A subset $Y\subseteq A^n$ is
algebraic over $\A$ if and only if
$$
Y=\V_\A (\Rad (Y)).
$$
\item A subset $S \subseteq \At_{\L} (X)$ is a radical ideal over $\A$ if and only if
$$
S=\Rad_\A (S).
$$
\item\label{rad1} For any sets $Y_1,Y_2\subseteq A^n$ one has
$$
Y_1 \subseteq Y_2 \quad \Longrightarrow \quad \Rad (Y_1)\supseteq
\Rad (Y_2).
$$
\item\label{rad2} For any systems of equations $S_1,S_2 \subseteq \At_{\L} (X)$
one has
$$
S_1 \subseteq S_2 \quad \Longrightarrow \quad \V (S_1)\supseteq \V
(S_2)\quad \Longrightarrow \quad \Rad (S_1)\subseteq \Rad (S_2).
$$
\item\label{rad5} For any algebraic sets $Y_1,Y_2 \subseteq A^n$ one has
$$
Y_1 = Y_2 \quad \Longleftrightarrow \quad \Rad (Y_1)=\Rad (Y_2).
$$
\item\label{rad3} For any family of subsets $\{Y_i \subseteq A^n, i\in I \}$ one has
$$
\Rad (\:\bigcup\limits_{i\in I} {Y_i}\,) \; = \;
\bigcap\limits_{i\in I} {\Rad (Y_i)}.
$$
\item\label{rad4} For any family of systems of equations $\{S_i\subseteq \At_{\L} (X), i\in
I\}$ one has
$$
\V(\:\bigcup\limits_{i\in I} {S_i}\,) \; = \; \bigcap\limits_{i\in
I} {\V (S_i)}.
$$
\end{enumerate}
In particular, intersection of any family of algebraic sets in
$A^n$ is an algebraic set.
\end{lemma}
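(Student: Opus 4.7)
The plan is to observe that the pair $(\V, \Rad)$ forms a Galois connection between the power set of $A^n$ and the power set of $\At_\L(X)$, induced by the relation ``point $p$ satisfies equation $(t_1=t_2)$''. Indeed, unravelling the definitions, for $Y \subseteq A^n$ and $S \subseteq \At_\L(X)$ one has $S \subseteq \Rad(Y) \iff Y \subseteq \V(S)$. Once this is in hand, items \ref{rad1}, \ref{rad2}, \ref{rad3}, \ref{rad4} are automatic from general Galois-connection formalism, and items 1, 2, 5 reduce to recognising closed elements as fixed points.

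Concretely, I would first dispatch the monotonicity items \ref{rad1} and \ref{rad2} directly from the definitions: if $Y_1 \subseteq Y_2$, any atomic formula valid on all of $Y_2$ is in particular valid on all of $Y_1$, so $\Rad(Y_1) \supseteq \Rad(Y_2)$; the analogous two-step chain for \ref{rad2} is the same bookkeeping, noting that $V$ reverses inclusions and then $\Rad$ reverses them again. For items \ref{rad3} and \ref{rad4} I would chase the universal quantifier: $(t_1=t_2) \in \Rad(\bigcup_i Y_i)$ iff $(t_1=t_2)$ holds at every point of every $Y_i$, iff $(t_1=t_2) \in \Rad(Y_i)$ for each $i$; dually for $\V$. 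The final ``in particular'' follows from \ref{rad4} together with the algebraicity of each $\V(S_i)$.

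Next I would handle (1). The inclusion $Y \subseteq \V(\Rad(Y))$ is immediate from the definition of $\Rad(Y)$. If $Y$ is algebraic, say $Y = \V(S)$, then by definition of $\Rad$ we have $S \subseteq \Rad(Y)$, and hence by \ref{rad2} we get $\V(\Rad(Y)) \subseteq \V(S) = Y$, giving equality. Conversely, if $Y = \V(\Rad(Y))$ then $Y$ is exhibited as the algebraic set of the system $\Rad(Y)$. Item (2) is proven by the dual argument, using the definition of a radical ideal. Item (5) is then immediate: the $\Rightarrow$ direction is trivial, and for $\Leftarrow$ apply (1) to both sides to get $Y_1 = \V(\Rad(Y_1)) = \V(\Rad(Y_2)) = Y_2$.

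There is no real obstacle here; the entire lemma is formal manipulation once one notices the Galois connection. The only mild care point is keeping straight which direction inclusions go when composing $\V$ and $\Rad$ (both are order-reversing, so their compositions $\V \circ \Rad$ and $\Rad \circ \V$ are closure operators), but this is handled uniformly by the Galois-connection viewpoint.
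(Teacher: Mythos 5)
Your proof is correct and complete; the paper itself dismisses this lemma with the single word ``Straightforward,'' and your Galois-connection organization (via the adjunction $S \subseteq \Rad(Y) \iff Y \subseteq \V(S)$) is a clean way of carrying out exactly the routine verification the authors have in mind. All the individual reductions — monotonicity, the fixed-point characterizations in items (1), (2), (5), and the intersection identities — check out.
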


\begin{proof}
Straightforward.
\end{proof}

Lemma~\ref{rad} gives the following method for calculation the
radical $\Rad (Y)$ of an arbitrary non-empty set $Y \subseteq
A^n$. With a point $p=(a_1,\ldots,a_n) \in A^n$ we associate the
homomorphism $h_p\colon \T_{\L}(X)\to \A$ defined by
$h_p(t)=t^{\A}(a_1,\ldots,a_n)$. Clearly,
$$
t_1\sim_{\ker h_p} t_2 \quad \Longleftrightarrow \quad
(t_1=t_2)\in \Rad (\{p\}).
$$

\begin{lemma}
Let $Y$ be a non-empty algebraic set over an algebra $\A$. Then
\begin{equation}\label{eq:rad}
\theta_{\Rad (Y)}= \bigcap\limits_{p\in Y} {\ker h_p}.
\end{equation}
\end{lemma}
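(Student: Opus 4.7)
The plan is to chase definitions, since both sides of the displayed equality are ultimately descriptions of the same pairs of terms. The essential observation is that $\Rad(Y)$, being the set of atomic formulas true on every point of $Y$, is already a congruent subset of $\At_{\L}(X)$: it contains $x_i=x_i$, is closed under symmetry and transitivity, and is closed under the operations of $\L$ (if $t_i^\A(p)=s_i^\A(p)$ for all $p\in Y$ and every $i$, then $f(t_1,\dots,t_k)^\A(p)=f(s_1,\dots,s_k)^\A(p)$ for all $p\in Y$). Hence $\Rad(Y)=[\Rad(Y)]$, and the congruence $\theta_{\Rad(Y)}$ on $\T_\L(X)$ is precisely the set of pairs $(t_1,t_2)$ with $(t_1=t_2)\in\Rad(Y)$.

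Given this, I would prove the two inclusions simultaneously by the following chain of equivalences, valid for arbitrary $t_1,t_2\in\T_\L(X)$:
\begin{align*}
(t_1,t_2)\in \theta_{\Rad(Y)}
&\iff (t_1=t_2)\in\Rad(Y)\\
&\iff t_1^\A(a_1,\dots,a_n)=t_2^\A(a_1,\dots,a_n)\ \text{for every}\ p=(a_1,\dots,a_n)\in Y\\
&\iff h_p(t_1)=h_p(t_2)\ \text{for every}\ p\in Y\\
&\iff (t_1,t_2)\in\bigcap_{p\in Y}\ker h_p.
\end{align*}
The first equivalence uses the preceding observation that $\Rad(Y)$ is congruent; the second is the definition of $\Rad(Y)$; the third is the definition of the evaluation homomorphism $h_p(t)=t^\A(a_1,\dots,a_n)$; the fourth is the definition of kernel and of intersection. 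Note that non-emptiness of $Y$ is needed only to keep the intersection on the right-hand side meaningful as a subcongruence of $\T_\L(X)\times\T_\L(X)$ (otherwise it would be the full square, while $\Rad(Y)=\At_\L(X)$ corresponds to the same full square, so strictly speaking the identity remains true vacuously).

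There is no real obstacle here; the only subtle point is verifying that $\Rad(Y)$ is a congruent subset so that $\theta_{\Rad(Y)}$ coincides with the set of pairs coming from $\Rad(Y)$. Once this has been noted, the equality \eqref{eq:rad} is a purely formal unfolding of the definitions of $\Rad$, of $h_p$, and of $\ker h_p$, so I would present it in the compact four-line chain above and leave it at that.
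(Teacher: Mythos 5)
Your proof is correct and follows essentially the same route as the paper, which simply combines the identity $\Rad(Y)=\bigcap_{p\in Y}\Rad(\{p\})$ (Lemma~\ref{rad}) with the already-noted correspondence $t_1\sim_{\ker h_p}t_2\Leftrightarrow(t_1=t_2)\in\Rad(\{p\})$; your four-line equivalence chain is just this same computation written out pointwise. The remark that $\Rad(Y)$ is a congruent set (so that $\theta_{\Rad(Y)}$ is exactly the set of pairs coming from $\Rad(Y)$) is also how the paper sets things up in Subsection~\ref{subsec:coo}.
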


\begin{proof}
Indeed, by Lemma~\ref{rad}, $\Rad(Y)=\bigcap_{p\in Y} {\Rad
(\{p\})}$.
\end{proof}

\subsection{Coordinate algebras} \label{subsec:coo}

Let $S\subseteq \At_{\L}(X)$ be a system of equations and
$Y=\V_{\A}(S)$. It is not hard to see that the radical $\Rad (Y)$
is a congruent set of atomic formulas. Hence, it defines a
congruence on the absolutely free $\L$-algebra $\T_{\L}(X)$ that
we denote by $\theta_{\Rad (Y)}$~\cite{DMR1}:
$$
t_1 \sim _{\theta_{\Rad (Y)}} t_2 \quad \Longleftrightarrow \quad
(t_1=t_2)\in \Rad (Y), \quad t_1, t_2 \in \Tr_\L(X).
$$

\begin{definition}
The $\L$-structure
$$
\Gamma (Y) \; = \; \T_{\L}(X) \: / \: \theta_{\Rad (Y)}
$$
is called the {\em coordinate algebra} of the algebraic set $Y$.
\end{definition}

When $Y=\V_\A (S)$ we also refer to $\Gamma(Y)$ as to the {\em
coordinate algebra of the system $S$} over $\A$ and write
$\Gamma_\A (S)$ (or $\Gamma (S)$).

If $S\sim_\A\At_\L(X)$ then $\Gamma (S)$ is the trivial algebra
$\E$. For instance, one has $S\sim_\A\At_\L(X)$ if $S$ is
inconsistent over $\A$.

\begin{definition}
We say that an $\L$-algebra $\C$ is a {\it coordinate algebra over
an $\L$-algebra $\A$} if $\C \cong \Gamma (Y)$ for some algebraic
set $Y$ over $\A$.
\end{definition}

One of the principal goals of algebraic geometry over an algebra
$\A$ is to describe algebraic sets over $\A$ up to isomorphism
(the definition of isomorphism of algebraic sets see in
Subsection~\ref{subsec:categories} below). We will show that this
problem has two equivalent forms: the problem of classification of
radicals and the problem of classification of coordinate algebras
over $\A$.

While every algebraic set may be restored in unique manner from
its radical, it may be restored from its coordinate algebra just
up to isomorphism. The following result gives a specification of
algebraic sets by means of sets of homomorphisms. It shows how one
can restore an algebraic set from its coordinate algebra.

\begin{lemma}\label{hom}
Let $Y$ be a non-empty algebraic set over an $\L$-algebra $\A$.
Then points of $Y$ lie in one-to-one correspondence with
$\L$-homomorphisms from $\Hom (\Gamma (Y),\A)$.
\end{lemma}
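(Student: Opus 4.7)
The plan is to exhibit the bijection explicitly in both directions and then check that the two constructions are mutually inverse, using the universal property of the quotient $\Gamma(Y) = \T_\L(X)/\theta_{\Rad(Y)}$ and Lemma~\ref{rad}.

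First, in one direction, to each point $p = (a_1,\ldots,a_n) \in Y$ I associate the homomorphism $h_p\colon \T_\L(X) \to \A$ determined by $x_i \mapsto a_i$ (this is the evaluation map introduced just before Lemma~\ref{rad}, extended from variables to terms by freeness of $\T_\L(X)$). The key step is to observe that $h_p$ factors through $\Gamma(Y)$. Indeed, if $(t_1 = t_2) \in \Rad(Y)$, then by definition of the radical $t_1^{\A}(a_1,\ldots,a_n) = t_2^{\A}(a_1,\ldots,a_n)$, that is $h_p(t_1) = h_p(t_2)$. Hence $\theta_{\Rad(Y)} \subseteq \ker h_p$, and $h_p$ induces a well-defined $\L$-homomorphism $\bar h_p\colon \Gamma(Y) \to \A$.

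Conversely, given any $\L$-homomorphism $\varphi\colon \Gamma(Y) \to \A$, let $\pi\colon \T_\L(X) \to \Gamma(Y)$ be the canonical projection and set $a_i = \varphi(\pi(x_i))$ for $i = 1,\ldots,n$. I claim the point $p_\varphi = (a_1,\ldots,a_n)$ lies in $Y$. By Lemma~\ref{rad}(1), $Y = \V_\A(\Rad(Y))$, so it suffices to check that $p_\varphi$ satisfies every equation in $\Rad(Y)$. If $(t_1=t_2) \in \Rad(Y)$ then $\pi(t_1) = \pi(t_2)$ in $\Gamma(Y)$; applying $\varphi$ gives $(\varphi \circ \pi)(t_1) = (\varphi \circ \pi)(t_2)$, and since $\varphi \circ \pi$ sends each $x_i$ to $a_i$, this is exactly $t_1^{\A}(a_1,\ldots,a_n) = t_2^{\A}(a_1,\ldots,a_n)$. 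Therefore $p_\varphi \in Y$.

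Finally I verify that the two assignments $p \mapsto \bar h_p$ and $\varphi \mapsto p_\varphi$ are mutually inverse: starting from $p$, the induced map $\bar h_p$ sends $\pi(x_i)$ to $a_i$, so $p_{\bar h_p} = p$; starting from $\varphi$, the map $\bar h_{p_\varphi}$ agrees with $\varphi$ on the generators $\pi(x_i)$ of $\Gamma(Y)$, hence on all of $\Gamma(Y)$. I do not anticipate a real obstacle here — the only subtle point is the factorisation step, which rests squarely on the definition of $\theta_{\Rad(Y)}$ and on $Y = \V_\A(\Rad(Y))$.
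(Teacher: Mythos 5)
Your proposal is correct and follows essentially the same route as the paper: both identify a homomorphism $\Gamma(Y)\to\A$ with the tuple of images of the generators $x_i/\theta_{\Rad(Y)}$, observe that such a tuple is admissible exactly when it satisfies every equation in $\Rad(Y)$, and conclude via $Y=\V_\A(\Rad(Y))$ (Lemma~\ref{rad}) that these tuples are precisely the points of $Y$. You merely spell out the factorisation through the quotient and the mutual-inverse check, which the paper leaves implicit.
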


\begin{proof}
Indeed, every homomorphism $h\colon \T_{\L}(X) / \theta_{\Rad (Y)}
\to \A$ is uniquely defined by the images of elements
$x_i/\theta_{\Rad (Y)}$, $i=\overline{1,n}$, i.e., by a point
$(a_1,\ldots,a_n)\in A^n$ with
$t^{\A}_1(a_1,\ldots,a_n)=t^{\A}_2(a_1,\ldots,a_n)$ for all
$(t_1=t_2)\in \Rad(Y)$. Clearly, the set of all appropriate points
$(a_1,\ldots,a_n)$ coincides with $Y$.
\end{proof}

\begin{corollary}
Points of a non-empty algebraic set $Y$ over an $\A$-algebra $\B$
lie in one-to-one correspondence with $\A$-homomorphisms from
$\Hom _{\A}(\Gamma (Y),\B)$.
\end{corollary}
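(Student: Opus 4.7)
The plan is to deduce this corollary directly from Lemma~\ref{hom} by passing to the extended language $\L_\A$. Recall that an $\A$-algebra $\B$ is, by definition, an $\L_\A$-algebra satisfying $\Diag(\A)$; in particular $\B$ is a perfectly good object of algebraic geometry in the signature $\L_\A$. Since every notion introduced in Section~\ref{sec:elements} (equation, algebraic set, radical, coordinate algebra, $\V$, $\Rad$, $\Gamma$) is defined uniformly in terms of the ambient functional language, we may apply all of them with $\L_\A$ in place of $\L$ and $\B$ in place of $\A$.

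So my first step is to observe that, viewed in this way, $Y\subseteq B^n$ is a non-empty algebraic set over the $\L_\A$-algebra $\B$, and its coordinate algebra is the $\L_\A$-structure
\[
\Gamma(Y) \;=\; \T_{\L_\A}(X)\,/\,\theta_{\Rad(Y)},
\]
where now $\Rad(Y)\subseteq\At_{\L_\A}(X)$. This agrees with the coordinate algebra used in the statement, because a system of equations with coefficients in $\A$ is precisely a system in $\At_{\L_\A}(X)$. My second step is then to apply Lemma~\ref{hom} verbatim in the language $\L_\A$ to obtain a natural bijection between points of $Y$ and $\L_\A$-homomorphisms $\Gamma(Y)\to\B$.

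The final step is a matter of translation: an $\L_\A$-homomorphism is, by the conventions of Section~\ref{sec:preliminaries}, exactly an $\A$-homomorphism, so $\Hom(\Gamma(Y),\B)$ computed in $\L_\A$ coincides with $\Hom_\A(\Gamma(Y),\B)$. Concretely, a homomorphism respecting the new constants $c_a$ is the same as one that commutes with the distinguished embedding $\lambda\colon\A\hookrightarrow\B$ after identifying $\A$ with its image in $\Gamma(Y)$.

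There is no genuine obstacle here; this is a bookkeeping corollary. The only mild care needed is to note that the bijection from Lemma~\ref{hom} sends a point $p=(b_1,\ldots,b_n)\in Y$ to the homomorphism induced by $x_i/\theta_{\Rad(Y)}\mapsto b_i$, and this homomorphism automatically sends each constant symbol $c_a$ to its interpretation in $\B$, because $c_a/\theta_{\Rad(Y)}$ is the image of $a\in A$ under the canonical embedding $\A\hookrightarrow\Gamma(Y)$ that exists precisely because $\Rad(Y)$ contains $\Diag(\A)$.
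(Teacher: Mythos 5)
Your proposal is correct and is exactly the route the paper intends: the corollary is the verbatim specialization of Lemma~\ref{hom} to the language $\L_\A$, together with the observation that $\L_\A$-homomorphisms are by convention $\A$-homomorphisms. The only nitpick is your phrase ``$\Rad(Y)$ contains $\Diag(\A)$'': the diagram also contains negated atomic sentences, which are not equations; what you mean (and what makes $\A\hookrightarrow\Gamma(Y)$ an embedding for non-empty $Y$) is that $\Rad(Y)$ contains the positive part of $\Diag(\A)$ and, since $\A$ embeds in $\B$, omits every closed atomic formula false in $\A$.
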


\bigskip

In the classical algebraic geometry over a field one can consider
the coordinate ring as the ring of polynomial functions. Let us
discuss a similar idea in universal algebraic geometry.

\begin{definition}
For a non-empty set $Y\subseteq A^n$ and a term $t\in \Tr_{\L}(X)$
we refer to the map $t^Y\colon Y\to A$ defined by
$$
t^Y(p)=t^{\A}(a_1,\ldots,a_n), \quad p=(a_1,\ldots,a_n), \quad
p\in Y,
$$
as a {\em term function} on $Y$. We call the set $\Tf (Y)$ of all
term functions on $Y$ with obvious interpretation of signature
symbols from $\L$ the {\em algebra of term functions on $Y$}.
\end{definition}

\begin{lemma}\label{termfunc}
For a non-empty algebraic set $Y$ over an $\L$-algebra $\A$ one
has
$$
\Gamma (Y) \cong \Tf (Y).
$$
\end{lemma}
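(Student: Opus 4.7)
The plan is to exhibit an explicit surjective homomorphism $\varphi \colon \T_\L(X) \to \Tf(Y)$ whose kernel is exactly the congruence $\theta_{\Rad(Y)}$, and then invoke the first isomorphism theorem.

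First I would define $\varphi$ by $\varphi(t) = t^Y$ for every term $t \in \Tr_\L(X)$. Surjectivity is immediate from the very definition of $\Tf(Y)$: every term function on $Y$ arises from some term. The homomorphism property is also essentially definitional, since the signature symbols act on $\Tf(Y)$ by pointwise application of their interpretations in $\A$; unwinding the definition, for an $m$-ary function symbol $f \in \L$ and terms $t_1,\ldots,t_m$, the function $f(t_1,\ldots,t_m)^Y$ sends $p \in Y$ to $f^\A(t_1^\A(p),\ldots,t_m^\A(p))$, which agrees with $f^{\Tf(Y)}(t_1^Y,\ldots,t_m^Y)$ evaluated at $p$.

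Next I would identify the kernel of $\varphi$. By construction,
\[
t_1^Y = t_2^Y \iff t_1^\A(a_1,\ldots,a_n) = t_2^\A(a_1,\ldots,a_n) \text{ for all } (a_1,\ldots,a_n) \in Y,
\]
and the right-hand side is precisely the statement $(t_1 = t_2) \in \Rad(Y)$. By the definition of $\theta_{\Rad(Y)}$ recalled just above, this is equivalent to $t_1 \sim_{\theta_{\Rad(Y)}} t_2$. Hence $\ker \varphi = \theta_{\Rad(Y)}$.

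Applying the homomorphism theorem yields $\T_\L(X)/\theta_{\Rad(Y)} \cong \Tf(Y)$, i.e., $\Gamma(Y) \cong \Tf(Y)$, as required. There is no real obstacle here: the only point that merits care is verifying that $\varphi$ respects the $\L$-structure on $\Tf(Y)$ (which is where one uses that the operations on $\Tf(Y)$ are pointwise), and that the characterization of $\ker \varphi$ matches the definition of $\Rad(Y)$ verbatim; both checks are straightforward from the definitions.
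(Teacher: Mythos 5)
Your proposal is correct and follows essentially the same route as the paper: both define the map $t \mapsto t^Y$ from $\T_\L(X)$ onto $\Tf(Y)$, identify its kernel with $\theta_{\Rad(Y)}$ via the definition of the radical, and conclude by the homomorphism theorem. The extra care you take in checking that the map respects the $\L$-operations is a reasonable addition but does not change the argument.
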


\begin{proof}
Let $h\colon \T_{\L}(X) \to \Tf(Y)$ be the epimorphism defined by
$h(t)=t^Y$, $t\in \Tr_\L(X)$. One has $\T_{\L}(X) /\ker h \simeq
\Tf (Y)$. On the other hand, $t_1\sim_{\ker h} t_2$ if and only if
$(t_1=t_2) \in \Rad (Y)$, $t_1,t_2 \in \Tr_{\L} (X)$. Therefore,
$\T_{\L}(X) /\ker h \simeq \Gamma (Y)$.
\end{proof}

\begin{example}
Let $Y=\{(a_1,\ldots,a_n)\}$ be a singleton algebraic set from
Example~\ref{ex:sing}. Then coordinate algebra $\Gamma (Y)$ is
$\A$-isomorphic to the algebra $\A$. Indeed, it is easy to see
that $\Tf(Y)\cong_\A \A$.
\end{example}

\bigskip

The empty set $\emptyset$ is not necessary an algebraic set over
an algebra $\A$.

\begin{example}
Let $\L=\{\cdot, ^{-1}, e\}$ be the language of groups and $G$ a
group. Every equation $t(x_1,\ldots,x_n)=s(x_1,\ldots,x_n)$ in
$\L$ has at least one root in $G$, namely, $x_1=\ldots=x_n=e$.
Thus, the empty set is not algebraic over $\langle G ; \L
\rangle$.
\end{example}

On the other hand, if $\L$ is a language containing at least two
constant symbols $c_1, c_2$, and $\A$ an $\L$-algebra with
$c_1^\A\ne c_2^\A$, then the empty set is algebraic over $\A$,
since $\V_\A (\{c_1=c_2\})=\emptyset$.

\begin{lemma}[on the empty set and the trivial algebra]\label{emptyset}
For an $\L$-algebra $\A$ the following hold:
\begin{itemize}
\item[1)] The empty set is an algebraic over $\A$ if and only if $\A$
has not a trivial subalgebra.
\item[2)] If the empty set
is algebraic over $\A$ then $\V_\A (\At_{\L}(X))=\emptyset$ for
every finite set $X$.
\item[3)] The trivial algebra $\E$ is a coordinate algebra over $\A$
anyway. Moreover, if $Y$ is an algebraic set over $\A$ such that
$\E=\Gamma (Y)$ then $Y$ is either irreducible or $Y=\emptyset$
(the definition of irreducible algebraic set see in
Subsection~\ref{subsec:topology} below).
\item[4)] The trivial algebra $\E$ is the coordinate algebra of an
irreducible algebraic set over $\A$ if and only if $\A$ has a
trivial subalgebra.
\end{itemize}
\end{lemma}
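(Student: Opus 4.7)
The plan is to extract a single structural fact that governs all four parts. Let $T_X := \V_\A(\At_\L(X))$ be the set of tuples satisfying every atomic formula of $\L$ in the variables $X = \{x_1,\ldots,x_n\}$. The key observation is that $(a_1,\ldots,a_n) \in T_X$ if and only if $t_1^\A(a_1,\ldots,a_n) = t_2^\A(a_1,\ldots,a_n)$ for all $t_1,t_2 \in \Tr_\L(X)$, which is in turn equivalent to saying that the subalgebra of $\A$ generated by $\{a_1,\ldots,a_n\}$ has a single element. This forces $a_1 = \cdots = a_n = e$ with $\{e\}$ a trivial subalgebra of $\A$, and conversely every trivial subalgebra $\{e\}$ of $\A$ produces the tuple $(e,\ldots,e) \in T_X$. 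Hence $T_X = \emptyset$ precisely when $\A$ has no trivial subalgebra.

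Parts (1) and (2) then follow simultaneously. If $\A$ has no trivial subalgebra, the observation yields $T_X = \V_\A(\At_\L(X)) = \emptyset$, exhibiting $\emptyset$ as algebraic over $\A$ for every finite $X$; conversely, if $\{e\} \subseteq \A$ is a trivial subalgebra, then $(e,\ldots,e) \in A^n$ is a root of every atomic formula, hence of every system $S \subseteq \At_\L(X)$, so no system over $\A$ is inconsistent and the empty set is not algebraic.

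For part (3), I would use that $\Gamma(Y) = \T_\L(X)/\theta_{\Rad(Y)}$ is isomorphic to $\E$ precisely when $\theta_{\Rad(Y)}$ is the total congruence on $\T_\L(X)$, i.e., when $\Rad(Y) = \At_\L(X)$. Since $Y$ is algebraic, Lemma~\ref{rad} then forces $Y = \V_\A(\Rad(Y)) = T_X$. Thus either $T_X = \emptyset$ and $Y = \emptyset$, or $T_X \neq \emptyset$ and $Y = T_X$; in the latter case any algebraic subset $Y' \subseteq T_X$ satisfies $\Rad(Y') \supseteq \Rad(T_X) = \At_\L(X)$, whence $Y' = \V_\A(\At_\L(X)) = T_X$, so the only algebraic decompositions of $T_X$ are trivial and $T_X$ is irreducible. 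This also shows that $\E$ is itself always a coordinate algebra over $\A$: it is realised as $\Gamma(\emptyset)$ when $\A$ has no trivial subalgebra, and as $\Gamma(T_X)$ otherwise.

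Part (4) is then a direct corollary. Since irreducible algebraic sets are non-empty by convention, part (3) forces $Y = T_X \neq \emptyset$, which by the key observation occurs exactly when $\A$ has a trivial subalgebra; conversely, $T_X$ itself then provides the required irreducible algebraic set with $\Gamma(T_X) \cong \E$. No real obstacle arises; the only subtlety worth flagging is that, in a signature without constants, $T_X$ may contain several distinct tuples (one per trivial subalgebra of $\A$), but the irreducibility argument given above is insensitive to this.
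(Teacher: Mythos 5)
Your proof is correct and follows essentially the same route as the paper's: both rest on the observation that $\V_\A(\At_{\L}(X))$ consists exactly of the diagonal tuples $(e,\ldots,e)$ arising from trivial subalgebras of $\A$, so that this set is non-empty if and only if $\A$ has a trivial subalgebra, and all four items are read off from this. The one point worth flagging is that your irreducibility argument in part (3) — that $T_X$ has no proper non-empty algebraic subsets and is therefore irreducible — silently uses the characterization of irreducibility of an algebraic set via finite unions of proper \emph{algebraic} subsets (Lemma~\ref{algirr}), since irreducibility is defined with respect to closed sets rather than algebraic ones; the paper defers exactly this step to Corollary~\ref{cor14}.
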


\begin{proof}
Suppose that $\A$ has a trivial subalgebra $\E=\langle \{e\};
\L\rangle$. Then for every term $t\in \Tr_\L(X)$ one has
$t(e,\ldots,e)=e$. Thus, every system of equation $S$ has a root
$p=(e,\ldots,e)$, and the empty set is not algebraic over $\A$.
Assume now that $\emptyset$ is not algebraic over $\A$. Then there
exists an element $e\in A$ such that $t(e)=e$ for all terms $t\in
\Tr_\L(\{x\})$. It is clear that the element $e$ forms a trivial
subalgebra of $\A$. We have just proven item~1). To show~2) assume
that $\emptyset$ is an algebraic set over $\A$. Then there exists
a natural number $n$ and an inconsistent over $\A$ system of
equations $S(x_1,\ldots,x_n)$. Thus, $S'(x)=S(x,\ldots,x)$ is
inconsistent too. Hence, for every finite set $X$ one has the
inconsistent system $S'\subseteq \At_\L(X)$, thus $\V_\A
(\At_{\L}(X))=\emptyset$.

The first statement in item~3) is obvious as far as $\E=\Gamma
(\At_\L(X))$. The second one will be proven in
Corollary~\ref{cor14} bellow. To prove item~4) let us assume that
$\A$ has a trivial subalgebra, then, according to item~1), the
empty set is not an algebraic set over $\A$. By item~3), it means
that the trivial algebra is the coordinate algebra of an
irreducible algebraic set over $\A$. Conversely, suppose that $\A$
has no a trivial subalgebra, then the empty set is algebraic over
$\A$. If even so $\E$ is the coordinate algebra of an irreducible
algebraic set $Y$ over $\A$, $Y=\V(S)$, $S \subseteq \At_\L(X)$,
then $\Rad (Y)=\At_\L(X)$. However, according to item~2), in this
case $\V_\A (\At_{\L}(X))=\emptyset$, that means $Y=\emptyset$.
The letter is contradicts to the definition of irreducible set.
\end{proof}

\begin{remark}
If an $\L$-algebra $\A$ has a trivial subalgebra then there exists
an element $e\in A$ such that $c^\A=e$ for all constant symbols
$c\in\L$. Suppose we study Diophantine algebraic geometry over a
non-trivial group $G$. Then the trivial subgroup $1$ of $G$ is not
a trivial subalgebra of $G$ in terms of model theory. As the
ground language here is $\L_G$, therefore, the trivial subgroup
$1$ is not $\L_G$-substructure of $G$ at all.
\end{remark}

\bigskip

The following proposition and its corollaries are helpful for the
problem of classifying of coordinate algebras over $\A$.

\begin{proposition}\label{coo}
Let $\A$ be an algebra in $\L$. Then for a finitely generated
algebra $\C$ of $\L$ the following conditions are equivalent:
\begin{itemize}
\item[1)] $\C \in \pvar (\A)$ ;
\item[2)] $\C$ embeds into a direct power of $\A$;
\item[3)] $\C$ is separated by $\A$;
\item[4)] $\C$ is the coordinate algebra of an algebraic set over $\A$ defined by a system of equations in $\L$.
\end{itemize}
\end{proposition}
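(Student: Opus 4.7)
The plan is to reduce the proposition to the single nontrivial implication $(3) \Leftrightarrow (4)$. Indeed, the identities $\Res(\K)=\S\P(\K)=\pvar(\K)$ recorded in Section~\ref{sec:preliminaries} for any class $\K$ of $\L$-algebras give, on specialization to $\K=\{\A\}$, the equivalence of the first three conditions; this part of the argument does not even use that $\C$ is finitely generated.

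For $(4) \Rightarrow (3)$, I would take $\C \cong \Gamma(Y)$ for an algebraic set $Y$ over $\A$. If $Y=\emptyset$ then $\Rad(Y)=\At_\L(X)$ forces $\C=\E$, which is trivially separated by any class. Otherwise pick distinct $c_1,c_2 \in C$ and write $c_i=t_i/\theta_{\Rad(Y)}$; the inequality $c_1 \ne c_2$ amounts to $(t_1=t_2) \notin \Rad(Y)$, so some point $p \in Y$ satisfies $t_1^\A(p) \ne t_2^\A(p)$. By Lemma~\ref{hom} the point $p$ yields a homomorphism $h_p \colon \C \to \A$ which separates $c_1$ and $c_2$ by construction.

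For $(3) \Rightarrow (4)$, the finite-generation hypothesis enters essentially. Fix generators $c_1,\ldots,c_n$ of $\C$, let $\pi \colon \T_\L(X) \to \C$ be the epimorphism sending $x_i \mapsto c_i$, and put
\[
S=\{(t_1=t_2) \in \At_\L(X) : \pi(t_1)=\pi(t_2)\}, \qquad Y=\V_\A(S).
\]
Then $\ker\pi$ is precisely the congruence on $\T_\L(X)$ determined by the congruent set $S$, so it suffices to show $\Rad(Y)=S$; this would give $\Gamma(Y)=\T_\L(X)/\theta_{\Rad(Y)}\cong\C$. The inclusion $S \subseteq \Rad(Y)$ is tautological. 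For the reverse, take $(t_1=t_2) \notin S$, so $\pi(t_1)\ne\pi(t_2)$ in $\C$, and apply hypothesis (3): some homomorphism $h \colon \C \to \A$ separates these values. The point $p=(h(c_1),\ldots,h(c_n))$ lies in $Y$ because $h$ respects every equation in $S$, yet $t_1^\A(p)\ne t_2^\A(p)$, so $(t_1=t_2)\notin\Rad(Y)$.

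The main obstacle is bookkeeping around the degenerate cases: Lemma~\ref{emptyset} together with the convention that an empty direct product equals $\E$ is what makes the equivalences robust when $\C=\E$ or when $Y$ is empty (or a single trivial-subalgebra point). Once these corner cases are accounted for, the heart of the proof is the short reciprocity between the radical of an algebraic set and the presenting congruence of its coordinate algebra exhibited above, which is precisely what makes the class of finitely generated coordinate algebras over $\A$ coincide with the finitely generated part of $\pvar(\A)$.
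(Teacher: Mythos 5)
Your proposal is correct and follows essentially the same route as the paper: the equivalence of 1)--3) is quoted from the prevariety identities $\Res(\A)=\S\P(\A)=\pvar(\A)$, and your implication $3)\Rightarrow 4)$ is exactly the paper's argument --- present $\C$ by a system $S$ and use separating homomorphisms to show that the radical of $S$ over $\A$ is no larger than the presenting congruence. The only cosmetic difference is that you close the cycle with $4)\Rightarrow 3)$ by separating two given elements of $\Gamma(Y)$ with a point of $Y$, whereas the paper proves $4)\Rightarrow 2)$ by embedding $\Gamma(Y)$ into $\A^{|Y|}$ via the kernel-intersection formula for $\theta_{\Rad(Y)}$; both rest on the same observation that $(t_1=t_2)\notin\Rad(Y)$ is witnessed by a point of $Y$.
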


\begin{proof}
Equivalence $1) \Longleftrightarrow 2) \Longleftrightarrow 3)$ has
been proven in~\cite[Lemma~3.5]{DMR1} in the form $\pvar(\A)=\S\P
(\A)=\Res(\A)$.

Suppose $\C$ is the coordinate algebra of an algebraic set $Y$
over $\A$. If $Y=\emptyset$ then $\Gamma (Y)=\E$, and inclusion
$\C \in \pvar(\A)$ is evident. So, we assume that $Y$ is
non-empty. The equality~\eqref{eq:rad} induces the monomorphism
$h\colon\T_{\L}(X)/\theta_{\Rad(Y)} \to \prod_{p\in
Y}{\T_\L(X)/\ker h_p}$~\cite[Lemma~3.1]{DMR1}. Since
$\T_\L(X)/\ker h_p \cong \mathrm{Im}\, h_p$ is a subalgebra of
$\A$ we have the embedding $h\colon\T_{\L}(X)/\theta_{\Rad(Y)} \to
\A^{|Y|}$. Thus, we have proved implication $4) \Longrightarrow
2)$.

Let us show implication $3) \Longrightarrow 4)$. Suppose that $\C$
is a finitely generated $\L$-algebra from $\Res (\A)$ with a
finite generating set $X=\{x_1, \ldots, x_n\}$. If $\C$ is the
trivial algebra then there is nothing to prove. Assume that $\C$
is non-trivial, and $\C = \langle X \mid S\rangle$ is a
presentation of $\C$ in the generators $X$, where $S \subset
\At_{\L}(X)$. The latter means that $\C \cong
\T_{\L}(X)/\theta_S$. It is sufficient to show that
$\C=\Gamma(S)$, i.e., $\Rad _{\A}(S)=[S]$. Since $\C$ is separated
by $\A$ for any atomic formula $(t=s)\not\in [S]$ there exists a
homomorphism $h\colon\C \to \A$ with
$t^{\A}(h(x_1),\ldots,h(x_n))\ne s^{\A}(h(x_1),\ldots,h(x_n))$.
Obviously, $(h(x_1),\ldots,h(x_n)) \in \V _{\A}(S)$, so
$(t=s)\not\in \Rad _{\A}(S)$. It proves that $\Rad _{\A}(S)=[S]$.
\end{proof}

\begin{corollary} \label{cor4}
Let $\C$ be the coordinate algebra of an algebraic set over an
algebra $\A$ and $\langle X \mid S\rangle$ a presentation of $\C$
in the generators $X$ with $S \subseteq \At_{\L}(X)$. Then $[S]$
is a radical ideal over $\A$.
\end{corollary}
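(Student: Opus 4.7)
The plan is to derive the corollary almost immediately from Proposition~\ref{coo} together with the implication $3)\Rightarrow 4)$ that was extracted in its proof. Since $\C$ is, by hypothesis, the coordinate algebra of some algebraic set over $\A$, Proposition~\ref{coo} guarantees that $\C$ belongs to $\pvar(\A)$ and, in particular, is separated by $\A$. Thus the task reduces to showing that whenever a finitely generated $\L$-algebra $\C$ is separated by $\A$ and presented as $\C\cong \T_\L(X)/\theta_S$, the congruent closure $[S]$ is a radical ideal over $\A$. By definition a radical ideal is the radical of some algebraic set, so it suffices to prove the sharper equality $[S]=\Rad_\A(S)=\Rad_\A(\V_\A(S))$.

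To prove $[S]=\Rad_\A(S)$, the inclusion $[S]\subseteq\Rad_\A(S)$ is routine: every atomic formula in $S$ holds at every root of $S$, and $\Rad_\A(S)$ is closed under the same operations used to form the congruent closure, so it contains $[S]$. For the converse inclusion I would repeat verbatim the separation argument already used inside the proof of Proposition~\ref{coo}: given $(t=s)\notin [S]$, the images $x_i/\theta_S$ of the generators are non-equivalent as witnesses $t/\theta_S\ne s/\theta_S$ in $\C$, and separation of $\C$ by $\A$ furnishes a homomorphism $h\colon\C\to\A$ with $t^\A(h(x_1),\dots,h(x_n))\ne s^\A(h(x_1),\dots,h(x_n))$. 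The point $(h(x_1),\dots,h(x_n))$ then lies in $\V_\A(S)$ but violates $(t=s)$, so $(t=s)\notin\Rad_\A(S)$. Contraposition gives $\Rad_\A(S)\subseteq[S]$.

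The only subtlety is the trivial case. If $\C\cong\E$, then $[S]=\At_\L(X)$, and one must still exhibit $\At_\L(X)$ as the radical of some subset of $A^n$. Two sub-cases appear: when $\A$ has no trivial subalgebra, Lemma~\ref{emptyset}(1) gives that the empty set is algebraic over $\A$, and $\Rad_\A(\emptyset)=\At_\L(X)$; when $\A$ possesses a trivial subalgebra generated by some $e\in A$, the point $(e,\dots,e)$ is a root of every equation, so $\{(e,\dots,e)\}$ is an algebraic set whose radical is again $\At_\L(X)$. Either way $\At_\L(X)$ is a radical ideal, so the trivial case causes no trouble.

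I do not expect a serious obstacle: the whole content is a repackaging of the argument already present in the proof of Proposition~\ref{coo}, invoked with $\C$ a coordinate algebra rather than an arbitrary separated one, and the only care needed is to record the trivial case explicitly so that the statement holds without the non-triviality hypothesis made inside Proposition~\ref{coo}.
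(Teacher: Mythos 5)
Your proposal is correct and is essentially the paper's own proof, which simply notes that $\C$ is separated by $\A$ and repeats the separation argument from the proof of Proposition~\ref{coo} to get $[S]=\Rad_\A(S)=\Rad(\V_\A(S))$. Your extra discussion of the trivial case is harmless but not needed: when $[S]=\At_\L(X)$ the separation argument is vacuous and still yields $[S]=\Rad_\A(S)$, which is a radical ideal by definition since $\Rad_\A(S)$ is the radical of the subset $\V_\A(S)\subseteq A^n$.
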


\begin{proof}
Since $\C$ is separated by $\A$ we may repeat the arguments above.
\end{proof}

\begin{corollary} \label{cor1}
The class of all coordinate algebras of algebraic sets over an
algebra $\A$ coincides with $\pvar (\A)_\omega$.
\end{corollary}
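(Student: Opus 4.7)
The plan is to derive this corollary as a direct consequence of Proposition~\ref{coo}, verifying both containments. Both directions are essentially repackagings of equivalences already proven there; the only new observation needed is that coordinate algebras are, by construction, finitely generated.

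For the inclusion $\supseteq$, suppose $\C \in \pvar(\A)_\omega$, i.e., $\C$ is a finitely generated algebra lying in $\pvar(\A)$. By the equivalence $1) \Leftrightarrow 4)$ in Proposition~\ref{coo}, $\C$ is the coordinate algebra of an algebraic set over $\A$ defined by a system of equations in $\L$, so $\C$ belongs to the class of all coordinate algebras over $\A$.

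For the inclusion $\subseteq$, let $\C$ be the coordinate algebra of an algebraic set $Y$ over $\A$. Recall that algebraic sets are defined using a finite tuple of variables $X = \{x_1,\ldots,x_n\}$, so $\C = \Gamma(Y) = \T_\L(X)/\theta_{\Rad(Y)}$ is finitely generated by the classes of $x_1,\ldots,x_n$. By Proposition~\ref{coo} (the implication $4) \Rightarrow 1)$), $\C \in \pvar(\A)$. Hence $\C \in \pvar(\A)_\omega$.

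There is no serious obstacle here; the corollary is a bookkeeping statement that packages the content of Proposition~\ref{coo} into a single equality of classes. The only point worth flagging is the finite-generation property of $\Gamma(Y)$, which is immediate from the convention (fixed at the start of Subsection~\ref{subsec:alg_sets}) that the variable set $X$ is finite, and from the fact that the generators of $\T_\L(X)$ descend to generators of the quotient $\T_\L(X)/\theta_{\Rad(Y)}$.
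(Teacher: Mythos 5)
Your proof is correct and matches the paper's intent exactly: the corollary is stated without proof as an immediate consequence of Proposition~\ref{coo}, and your two inclusions (using the equivalence $1)\Leftrightarrow 4)$ in one direction and finite generation of $\Gamma(Y)=\T_\L(X)/\theta_{\Rad(Y)}$ plus $4)\Rightarrow 1)$ in the other) are precisely the bookkeeping the authors leave implicit.
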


\begin{corollary} \label{cor8}
Let a finitely generated $\L$-algebra $\C$ imbeds into a direct
product of coordinate algebras of some algebraic sets over $\A$.
Then $\C$ is the coordinate algebra of an algebraic set over $\A$.
\end{corollary}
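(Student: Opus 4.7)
The plan is to reduce everything to Proposition~\ref{coo}, which characterises coordinate algebras of algebraic sets over $\A$ (among finitely generated algebras) as exactly the algebras in $\pvar(\A)=\S\P(\A)=\Res(\A)$. So the task amounts to showing that the hypothesis forces $\C$ to sit inside some direct power of $\A$ (equivalently, to be separated by $\A$), and then invoke the proposition in the reverse direction.

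First, I would unpack the hypothesis. Let $\C_i$, $i\in I$, be coordinate algebras of algebraic sets over $\A$, and suppose $\C$ embeds into $\prod_{i\in I}\C_i$. By item~4)$\Rightarrow$2) of Proposition~\ref{coo} (applied to each $\C_i$, noting that coordinate algebras are finitely generated by construction as quotients of $\T_\L(X)$ for a finite $X$), each $\C_i$ embeds into a direct power $\A^{J_i}$ of $\A$. Taking the product of these embeddings yields an embedding
\[
\prod_{i\in I}\C_i \;\hookrightarrow\; \prod_{i\in I}\A^{J_i} \;\cong\; \A^{J},
\qquad J=\bigsqcup_{i\in I} J_i,
\]
so by composition $\C$ embeds into the direct power $\A^{J}$. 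Since $\C$ is finitely generated, implication 2)$\Rightarrow$4) of Proposition~\ref{coo} gives that $\C$ is the coordinate algebra of an algebraic set over $\A$, which is the conclusion.

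There is really no obstacle here: the hypothesis is almost exactly the $\S\P$-closure property that Proposition~\ref{coo} packages into the notion of coordinate algebra. The only thing one needs to be slightly careful about is the case where some $\C_i$ (or $\C$ itself) is trivial or where $I$ is empty, but the convention adopted in the preliminaries that the empty direct product is the trivial $\L$-algebra $\E$, together with the fact that $\E\in\pvar(\A)$ always holds (Lemma~\ref{emptyset}, item~3)), handles these degenerate situations uniformly. Alternatively, one can phrase the whole argument using separation instead of embedding: coordinate algebras are separated by $\A$ by 4)$\Rightarrow$3) of Proposition~\ref{coo}; the class $\Res(\A)$ is trivially closed under subalgebras and direct products, hence contains $\C$; and 3)$\Rightarrow$4) completes the proof.
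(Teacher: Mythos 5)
Your argument is correct and is essentially the paper's intended one: the corollary is stated without proof precisely because it follows from Proposition~\ref{coo} by composing the embeddings $\C_i\hookrightarrow\A^{J_i}$ into a single embedding of $\C$ into a direct power of $\A$ and then applying the implication $2)\Rightarrow 4)$ to the finitely generated algebra $\C$. Your remarks on the degenerate cases (empty index set, trivial factors) and the alternative phrasing via $\Res(\A)=\S\P(\A)$ are consistent with the conventions adopted in the preliminaries.
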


\begin{corollary} \label{cor2}
For any algebraic set $Y$ over algebra $\A$ one has $\Gamma (Y)
\in \qvar (\A)$. In particular, $\Gamma (Y)$ satisfies all
identities and quasi-identities in $\L$ which hold in $\A$.
\end{corollary}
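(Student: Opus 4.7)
The plan is to deduce both parts of the statement directly from Proposition~\ref{coo} combined with the inclusion chain displayed in Section~\ref{sec:preliminaries}. First I would apply the implication $4) \Rightarrow 1)$ of Proposition~\ref{coo} to the coordinate algebra $\Gamma(Y)$: since $\Gamma(Y)$ is, by definition, the coordinate algebra of an algebraic set over $\A$, it lies in $\pvar(\A)$. (If $Y = \emptyset$ then $\Gamma(Y) = \E$, which belongs to $\pvar(\A)$ trivially as the empty direct product; otherwise Proposition~\ref{coo} applies directly.) Then I would invoke the chain
\[
\pvar(\A) \;=\; \S\P(\A) \;\subseteq\; \qvar(\A)
\]
recorded in the Preliminaries to conclude $\Gamma(Y) \in \qvar(\A)$.

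For the ``in particular'' clause, I would use the identity $\qvar(\A) = \Mod(\Th_{\rm qi}(\A))$, also recorded in Section~\ref{sec:preliminaries}. By definition of $\Mod$, every member of $\qvar(\A)$ satisfies all quasi-identities in $\Th_{\rm qi}(\A)$, i.e.\ every quasi-identity of $\L$ that holds in $\A$; since an identity is a quasi-identity with empty premise, the same conclusion covers identities. There is no substantive obstacle here: the corollary is a packaging of Proposition~\ref{coo} with the standard embedding $\pvar \subseteq \qvar$, both of which are already in hand, so the only care needed is to handle the trivial coordinate algebra $\E$ separately (it is always in $\qvar(\A)$ because it embeds into the empty direct power of $\A$ under the convention adopted from Gorbunov).
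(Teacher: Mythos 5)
Your argument is correct and matches the paper's own proof, which likewise derives the corollary from Proposition~\ref{coo} (item 4 implies item 1) together with the inclusion $\pvar(\A)\subseteq\qvar(\A)$. Your extra remarks on the trivial algebra $\E$ and on identities being quasi-identities are consistent with the conventions the paper adopts and add nothing problematic.
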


\begin{proof}
It follows from inclusion $\pvar(\A) \subseteq \qvar(\A)$.
\end{proof}

\begin{corollary} \label{cor3}
Let $S$ be a consistent system of equations over $\A$-algebra
$\B$. Then the coordinate algebra $\Gamma_\B (S)$ is an
$\A$-algebra too.
\end{corollary}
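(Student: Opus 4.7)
The plan is to exhibit an $\L_\A$-embedding $\lambda^\ast\colon \A \to \Gamma_\B(S)$ witnessing that $\Gamma_\B(S) \models \Diag(\A)$. I would set $\lambda^\ast(a) = [c_a]_{\theta_{\Rad_\B(Y)}}$, i.e.\ the class of the variable-free term $c_a \in \T_{\L_\A}(X)$ in the coordinate algebra, where $Y = \V_\B(S)$ (non-empty by consistency of $S$).

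The key observation I would isolate first is the following. Any variable-free atomic $\L_\A$-sentence $\sigma$ sits inside $\At_{\L_\A}(X)$ (its set of free variables is vacuously contained in $X$), and its evaluation at any point $q \in B^n$ is independent of $q$ and equals its truth value in $\B$. Since $\B$ is an $\A$-algebra and $Y \ne \emptyset$, this yields
\[
\sigma \in \Rad_\B(S) \ \iff\ \B \models \sigma \ \iff\ \A \models \sigma.
\]

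With this in hand I would proceed in two steps. First, the positive atomic part of $\Diag(\A)$ lies inside $\Rad_\B(S)$; in particular, for every $n$-ary $f \in \L$ and $a_1, \ldots, a_n \in A$ the sentence $f(c_{a_1}, \ldots, c_{a_n}) = c_{f^\A(a_1, \ldots, a_n)}$ is identified in $\Gamma_\B(S)$, so $\lambda^\ast$ is a homomorphism of the $\L$-reduct that correctly interprets the new constants $c_a$ in $\Gamma_\B(S)$. Second, the negated atomic part of $\Diag(\A)$ lies outside $\Rad_\B(S)$; in particular for distinct $a_1, a_2 \in A$ one has $(c_{a_1} = c_{a_2}) \notin \Rad_\B(S)$, which both yields injectivity of $\lambda^\ast$ and forces $\Gamma_\B(S)$ to satisfy the negated atomic sentences of $\Diag(\A)$. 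Combining the two steps gives $\Gamma_\B(S) \models \Diag(\A)$, with $\lambda^\ast$ as the distinguished embedding, so $\Gamma_\B(S)$ is indeed an $\A$-algebra.

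I do not anticipate a serious obstacle: consistency of $S$ is used only to produce a single point so that the equivalence above is non-vacuous, and after that the argument is a direct unwinding of the definitions of $\Diag(\A)$, the radical, and the coordinate algebra. The one small conceptual point worth making explicit is the convention that variable-free atomic formulas are considered as elements of $\At_{\L_\A}(X)$, so that the sentences from $\Diag(\A)$ may genuinely enter (or fail to enter) the radical $\Rad_\B(S)$; once this is granted, the statement is essentially a reflection of the fact that a coordinate algebra of a non-empty algebraic set inherits the ambient $\A$-structure through its constant terms.
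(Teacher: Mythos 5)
Your proof is correct, but it takes a different route from the paper's. The paper argues top-down: by Proposition~\ref{coo} (in the language $\L_\A$) the coordinate algebra lies in $\pvar_\A(\B)$, and it then cites a result from~\cite{DMR1} that every algebra in $\pvar_\A(\B)$ other than the trivial algebra $\E$ is an $\A$-algebra; the remaining work is the edge case, where consistency of $S$ is used to show that $\Gamma_\B(S)=\E$ forces $\B$ to have a trivial $\L_\A$-subalgebra (via Lemma~\ref{emptyset}), hence $\A\cong\E$, so $\E$ is an $\A$-algebra after all. You instead argue bottom-up and self-containedly: you build the distinguished embedding $a\mapsto [c_a]_{\theta_{\Rad_\B(S)}}$ directly and verify $\Gamma_\B(S)\models\Diag(\A)$ from the observation that a closed atomic $\L_\A$-sentence belongs to $\Rad_\B(S)$ if and only if it holds in $\B$ (equivalently, in $\A$), which requires only one point of $\V_\B(S)$ --- exactly what consistency provides. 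Your version avoids the external citation and makes the role of the hypothesis completely transparent; it also absorbs the trivial-algebra case without a separate argument, since non-triviality of $\A$ automatically yields distinct classes $[c_{a_1}]\ne[c_{a_2}]$. What the paper's version buys is brevity and placement of the statement inside the prevariety framework it has already set up. The one point you rightly flag --- that closed atomic formulas must be regarded as elements of $\At_{\L_\A}(X)$ so that they can enter or avoid the radical --- is the only conceptual hinge, and you handle it correctly.
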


\begin{proof}
All algebras from $\pvar_\A (\B)$ except the trivial algebra $\E$
are $\A$-algebras~\cite[Corollary~3.16]{DMR1}. If $\Gamma _\B
(S)=\E$ and $S$ is consistent then the empty set $\emptyset$ is
not algebraic over $\B$. Hence, by Lemma~\ref{emptyset}, $\B$ has
a trivial $\L_\A$-subalgebra. It is possible if and only if
$\A\cong \E$.
\end{proof}

The asserted connection between classification of algebraic sets
upto isomorphism and classification of them coordinate algebras
will be discussed in Section~\ref{sec:dual}, and now let us prove
two first results in this direction.

\begin{lemma}\label{cor0}
Let $Y$ and $Z$ are algebraic sets in $A^n$ such that $Y \subseteq
Z$. Then there exists an epimorphism $h\colon \Gamma (Z) \to
\Gamma (Y)$. Moreover, if the inclusion $Y \subset Z$ is strict
then the epimorphism $h$ is proper.
\end{lemma}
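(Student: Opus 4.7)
The plan is to apply Lemma~\ref{rad} to translate the inclusion $Y \subseteq Z$ into an inclusion of radicals, and then use the universal property of quotient algebras to produce the desired epimorphism.

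First I would recall that, by definition, $\Gamma(Y) = \T_\L(X)/\theta_{\Rad(Y)}$ and $\Gamma(Z) = \T_\L(X)/\theta_{\Rad(Z)}$. By item~\ref{rad1} of Lemma~\ref{rad}, the assumption $Y \subseteq Z$ yields $\Rad(Y) \supseteq \Rad(Z)$. Since both radicals are congruent subsets of $\At_\L(X)$, this inclusion lifts to an inclusion of congruences $\theta_{\Rad(Z)} \subseteq \theta_{\Rad(Y)}$ on the absolutely free algebra $\T_\L(X)$.

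Next I would define $h\colon \Gamma(Z)\to\Gamma(Y)$ as the canonical map induced by the identity on $\T_\L(X)$: send $t/\theta_{\Rad(Z)}$ to $t/\theta_{\Rad(Y)}$ for every $t\in\Tr_\L(X)$. The inclusion of congruences guarantees that $h$ is well-defined, and it is a homomorphism because it arises from the identity on $\T_\L(X)$ via the quotient maps. Surjectivity is immediate, since every coset $t/\theta_{\Rad(Y)}$ in $\Gamma(Y)$ is the image of $t/\theta_{\Rad(Z)}$.

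For the second assertion, assume the inclusion $Y\subset Z$ is strict. By item~\ref{rad5} of Lemma~\ref{rad}, $\Rad(Y)\neq \Rad(Z)$, and combined with $\Rad(Y)\supseteq \Rad(Z)$ this gives a strict inclusion $\Rad(Y)\supsetneq \Rad(Z)$. Pick an atomic formula $(t_1=t_2)\in \Rad(Y)\setminus \Rad(Z)$. Then $t_1/\theta_{\Rad(Z)}\neq t_2/\theta_{\Rad(Z)}$ in $\Gamma(Z)$, while $h$ sends both cosets to the same element $t_1/\theta_{\Rad(Y)}=t_2/\theta_{\Rad(Y)}$ in $\Gamma(Y)$. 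Hence $h$ is not injective, so it is a proper epimorphism. The only boundary case to check is $Y=\emptyset$: here $\Rad(Y)=\At_\L(X)$ and $\Gamma(Y)=\E$, and the map $h\colon \Gamma(Z)\to\E$ is still an epimorphism, proper whenever $Z\neq\emptyset$. There is no real obstacle in the argument; the whole proof is essentially bookkeeping once Lemma~\ref{rad} is in place.
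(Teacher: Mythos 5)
Your proposal is correct and follows essentially the same route as the paper: translate $Y\subseteq Z$ into $\Rad(Y)\supseteq\Rad(Z)$ via Lemma~\ref{rad}, pass to the comparison of congruences $\theta_{\Rad(Z)}\leqslant\theta_{\Rad(Y)}$, take the natural quotient epimorphism, and use $\Rad(Y)\neq\Rad(Z)$ (again from Lemma~\ref{rad}) to see that it fails to be injective when the inclusion is strict. The paper's proof is just a terser version of your bookkeeping; your extra check of the $Y=\emptyset$ case is harmless and consistent.
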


\begin{proof}
As $Y \subseteq Z$ then $\Rad (Y) \supseteq \Rad (Z)$, i.e.,
$\theta_{\Rad (Y)} \geqslant \theta_{\Rad (Z)}$. Hence, there
exists the natural epimorphism $h\colon \Gamma (Z) \to \Gamma
(Y)$. If $Y \ne Z$ then $\Rad (Y) \ne \Rad (Z)$, so the
epimorphism $h$ is not a monomorphism.
\end{proof}

\begin{lemma}\label{cor7}
Let $Y\subseteq A^n$ and $Z\subseteq A^m$ be an algebraic sets
over $\A$. Suppose there exists an epimorphism $h\colon\Gamma
(Z)\to\Gamma (Y)$. Then there exists an algebraic subset
$Y'\subseteq Z$ with $\Gamma(Y)\cong\Gamma(Y')$. Moreover, if $h$
is proper then the inclusion $Y' \subset Z$ is strict.
\end{lemma}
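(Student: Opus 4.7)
The plan is to translate the epimorphism $h$ into an inclusion of radical ideals, and then read off $Y'$ as the corresponding algebraic subset of $Z$.

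First I would fix notation. Write $\Gamma(Z)=\T_\L(x_1,\ldots,x_m)/\theta_{\Rad(Z)}$ and set $y_j:=h\bigl(x_j/\theta_{\Rad(Z)}\bigr)\in\Gamma(Y)$ for $j=1,\ldots,m$. Since $h$ is an epimorphism, the elements $y_1,\ldots,y_m$ generate $\Gamma(Y)$. Hence the assignment $x_j\mapsto y_j$ defines an epimorphism $\pi\colon\T_\L(x_1,\ldots,x_m)\to\Gamma(Y)$ whose kernel is the congruent set
\[
S'\;=\;\bigl\{\,(u=v)\in\At_\L(x_1,\ldots,x_m)\;\big|\;u^{\Gamma(Y)}(y_1,\ldots,y_m)=v^{\Gamma(Y)}(y_1,\ldots,y_m)\,\bigr\},
\]
so that $\langle x_1,\ldots,x_m\mid S'\rangle$ is a presentation of $\Gamma(Y)$ and $\Gamma(Y)\cong\T_\L(x_1,\ldots,x_m)/\theta_{S'}$. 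Because $\Gamma(Y)$ is the coordinate algebra of an algebraic set over $\A$, Corollary~\ref{cor4} applies to this presentation, and since $S'$ is already congruent we get $[S']=S'=\Rad_\A(\V_\A(S'))$; that is, $S'$ is a radical ideal over $\A$.

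Next I would define $Y':=\V_\A(S')\subseteq A^m$. By construction $\Rad(Y')=S'$, hence
\[
\Gamma(Y')\;=\;\T_\L(x_1,\ldots,x_m)/\theta_{\Rad(Y')}\;=\;\T_\L(x_1,\ldots,x_m)/\theta_{S'}\;\cong\;\Gamma(Y).
\]
To see $Y'\subseteq Z$, it suffices (by Lemma~\ref{rad}) to verify $\Rad(Z)\subseteq S'$: if $(u=v)\in\Rad(Z)$, then $u$ and $v$ represent the same element of $\Gamma(Z)$, so applying $h$ gives $u^{\Gamma(Y)}(y_1,\ldots,y_m)=v^{\Gamma(Y)}(y_1,\ldots,y_m)$, i.e.\ $(u=v)\in S'$. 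Taking $\V_\A$ reverses inclusions, whence $Y'=\V_\A(S')\subseteq\V_\A(\Rad(Z))=Z$.

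Finally, for the strict case, suppose $h$ is proper, i.e.\ not injective. Then there is a pair of terms $u,v$ representing distinct elements of $\Gamma(Z)$ with $h$ identifying them, which yields $(u=v)\in S'\setminus\Rad(Z)$. Hence $\Rad(Y')=S'\supsetneq\Rad(Z)$, and by the bijective correspondence between algebraic sets and their radicals (Lemma~\ref{rad}\eqref{rad5}) we conclude $Y'\neq Z$; combined with $Y'\subseteq Z$ this gives $Y'\subsetneq Z$. I expect no serious obstacle here; the only place where care is needed is the appeal to Corollary~\ref{cor4} to guarantee that the congruence $S'$ arising from $h$ is in fact a \emph{radical} ideal—this is precisely what lets us realise the abstract algebra $\Gamma(Y)$ as the coordinate algebra of a concrete subset of $Z$.
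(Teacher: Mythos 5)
Your proposal is correct and follows essentially the same route as the paper's own proof: present $\Gamma(Y)$ on the generators $h(x_j/\theta_{\Rad(Z)})$, invoke Corollary~\ref{cor4} to see that the resulting relation set is a radical ideal containing $\Rad(Z)$, and take $Y'$ to be its zero set. Your only (harmless) refinement is choosing $S'$ to be the full kernel congruence, so that $[S']=S'$ on the nose rather than passing to the congruent closure.
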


\begin{proof}
Let us introduce the notation $\Gamma
(Z)=\T_\L(\{x'_1,\ldots,x'_m\})/\theta_{\Rad(Z)}$. Since $h$ is an
epimorphism the coordinate algebra $\Gamma (Y)$ generates by the
set $X'=\{h(x'_i/\theta_{\Rad (Z)}), i=\overline{1,m}\,\}$. Thus,
there exists $S'\subseteq \At_\L(X')$ such that $\Gamma
(Y)\cong\langle X' \mid S' \rangle$. It is obvious that $[S']
\supseteq \Rad (Z)$. By Corollary~\ref{cor4}, $[S']$ is a radical
ideal over $\A$. Hence, for the algebraic set $Y'=\V_\A(S')$ we
have $\Rad (Y')=[S']$. It is clear that $\Gamma (Y)\cong \Gamma
(Y')$. Since $[S'] \supseteq \Rad (Z)$ we have inclusion $Y'
\subseteq Z$. Furthermore, if $h$ is proper then the inclusion
$[S'] \supset \Rad (Z)$ is strict, and therefore, the inclusion
$Y' \subset Z$ is also strict.
\end{proof}

\subsection{The Zariski topology and irreducible sets} \label{subsec:topology}

There are three perspectives for investigation in the algebraic
geometry over a given algebra $\A$~--- algebraic, geometric, and
logic. The geometric approach is connected with examination of the
affine space $A^n$ as topological space.

Following~\cite{BMR1}, we define the {\em Zariski topology} on
$A^n$, where algebraic sets over $\A$ form a pre-basis of closed
sets, i.e., closed sets in this topology are obtained from the
algebraic sets by finite unions and arbitrary intersections.

\begin{remark}
Suppose that $\A_1$ and $\A_2$ are algebras with the same universe
set $A$. Let $\mathfrak{T}_i$ be the family of algebraic sets
$Y\subseteq A^n$ over $\A_i$, $i=1,2$. In general,
$\mathfrak{T}_1$ and $\mathfrak{T}_2$ are different families. Then
the affine space $A^n$ possesses two Zariski topologies. For the
sake of good order we assume everywhere below that the language
$\L$ and $\L$-algebra $\A$ with universe $A$ are fixed.
\end{remark}

For a subset $Y \subseteq A^n$ we denote by $\overline{Y}$ its
closure in the Zariski topology on $A^n$ and by $Y^\ac$ the least
algebraic set over $\A$ which contains $Y$. It is clear that
$$
Y^\ac=\V_\A (\Rad (Y))= \bigcap_{Y\subseteq Z}{\{Z,\;\;
Z\;\mbox{is algebraic set over }\A\}}.
$$

In the classic algebraic geometry when $\A$ is a field notions of
$\overline{Y}$ and $Y^\ac$ coincide, because in this case all sets
closed in the Zariski topology are algebraic. Algebraic structures
with such property are called {\it equational domains}. We discuss
equational domains in one of the next articles on the universal
algebraic geometry.

In general case we have only inclusion $\overline{Y} \subseteq
Y^\ac$ for a subset $Y \subseteq A^n$. It is clear that
$\overline{Y} = Y^\ac$ if and only if $\overline{Y}$ is an
algebraic set. Lemma~\ref{irr1} below shows that identity
$\overline{Y} = Y^\ac$ holds for every irreducible set $Y$.

\begin{definition}
A subset  $\emptyset\ne Y\subseteq A^n$ is called {\em
irreducible} if for all closed subsets $Y_1, Y_2 \subseteq A^n$
inclusion $Y \subseteq Y_1\cup Y_2$ implies $Y\subseteq Y_1$ or $Y
\subseteq Y_2$; otherwise, it is called {\em reducible}. The empty
set is not considered to be irreducible.
\end{definition}

For example, any singleton set $\{p\}$, $p\in A^n$, is
irreducible.

A non-empty closed set is irreducible if and only if it is not a
union of two proper closed subsets. In an arbitrary topological
space $(W,\mathfrak{T})$ a subset $Y\subset W$ is irreducible if
and only if its closure $\overline{Y}$ is irreducible. In our case
this fact has the following specification.

\begin{lemma}\label{irr1}
Let $Y\subseteq A^n$ be an irreducible set. Then $\overline{Y} =
Y^\ac$, i.e., $\overline{Y}$ is an irreducible algebraic set over
$\A$. In particularly, every closed irreducible subset $Y\subseteq
A^n$ is algebraic.
\end{lemma}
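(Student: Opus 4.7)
The plan is to prove both inclusions in $\overline{Y}=Y^{\ac}$. The inclusion $\overline{Y}\subseteq Y^{\ac}$ is immediate, since $Y^{\ac}$ is itself an algebraic set containing $Y$, hence a closed set in the Zariski topology containing $Y$, so it contains the closure $\overline{Y}$. The nontrivial direction is $Y^{\ac}\subseteq\overline{Y}$, for which it suffices to exhibit an algebraic set sandwiched between $Y$ and $\overline{Y}$: any such algebraic set contains $Y^{\ac}$ by the very definition of $Y^{\ac}$ as the least algebraic set over $\A$ containing $Y$.

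To produce such an algebraic set, I would unravel the definition of the Zariski topology: every closed set is an arbitrary intersection of finite unions of algebraic sets, so one may write
\[
\overline{Y} \;=\; \bigcap_{\alpha\in I}\bigl(Z_{\alpha,1}\cup\cdots\cup Z_{\alpha,k_\alpha}\bigr),
\]
with each $Z_{\alpha,j}$ algebraic over $\A$. For every $\alpha$ we have $Y\subseteq Z_{\alpha,1}\cup\cdots\cup Z_{\alpha,k_\alpha}$. A short induction on $k_\alpha$, whose base case is precisely the definition of irreducibility of $Y$ (using that finite unions of algebraic sets are closed), then yields an index $j(\alpha)$ with $Y\subseteq Z_{\alpha,j(\alpha)}$. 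Set
\[
Z \;=\; \bigcap_{\alpha\in I} Z_{\alpha,j(\alpha)}.
\]
By item~(\ref{rad3}) (or rather the concluding remark) of Lemma~\ref{rad}, $Z$ is algebraic over $\A$, and by construction $Y\subseteq Z\subseteq\overline{Y}$. Hence $Y^{\ac}\subseteq Z\subseteq\overline{Y}$, which together with the easy inclusion gives $\overline{Y}=Y^{\ac}$, and in particular $\overline{Y}$ is algebraic.

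It remains to verify that $\overline{Y}$ is irreducible as a subset of $A^n$. This is a standard topological fact, which I would check directly against the definition: if $\overline{Y}\subseteq Y_1\cup Y_2$ with $Y_1,Y_2$ closed, then a fortiori $Y\subseteq Y_1\cup Y_2$, so by irreducibility of $Y$ we have $Y\subseteq Y_i$ for some $i$, whence $\overline{Y}\subseteq\overline{Y_i}=Y_i$. The ``in particular'' statement then follows at once: if $Y$ is itself closed and irreducible, then $Y=\overline{Y}=Y^{\ac}$ is algebraic. The main (mild) obstacle is the bookkeeping step of choosing the index $j(\alpha)$ uniformly in $\alpha$ to form the intersection $Z$; once one recognises that arbitrary intersections of algebraic sets stay algebraic, everything else is formal.
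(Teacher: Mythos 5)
Your proof is correct and follows essentially the same route as the paper: write $\overline{Y}$ as an intersection of finite unions of algebraic sets, use irreducibility to select one algebraic set from each finite union containing $Y$, and observe that the resulting intersection is an algebraic set squeezed between $Y$ and $\overline{Y}$. You are in fact slightly more explicit than the paper on two minor points (the induction extending the two-set definition of irreducibility to $k$-fold unions, and the verification that $\overline{Y}$ is irreducible), both of which the paper treats as standard.
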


\begin{proof}
Let $\overline{Y}=\bigcap\limits_{i\in I}{\{Y^i_1 \cup\ldots \cup
Y^i_{m_i}\}}$, where $Y^i_j$ are algebraic sets. For each $i\in I$
we have $Y \subseteq Y^i_1 \cup\ldots \cup Y^i_{m_i}$, hence there
exists $j(i)\in \{1,\ldots,m_i\}$ such that $Y\subseteq
Y^i_{j(i)}$. So, $\overline{Y}=\bigcap\limits_{i\in
I}{Y^i_{j(i)}}$ is an algebraic set, and therefore $\overline{Y} =
Y^\ac$.
\end{proof}

In an arbitrary topological space $(W,\mathfrak{T})$ every
irreducible subset $Y\subset W$ is contained in a maximal
irreducible subset $Z$ (it follows from Zorn Lemma, since the
union of sets from an ascending chain of irreducible sets is an
irreducible set), which is, of cause, is closed (since the closure
of irreducible set is irreducible). The maximal irreducible
subsets $Z\subset W$ are called {\em irreducible components} of
$W$. The irreducible components cover $W$ (since every point $p\in
W$ forms an irreducible set, that is contained in a maximal
irreducible set). In our case this topological fact turns into the
following statement.

\begin{lemma}\label{irr0}
Every non-empty closed in the Zariski topology subset $Y\subseteq
A^n$ is a union of maximal irreducible algebraic over $\A$ subsets
$Y_i\subseteq Y$~--- irreducible components of $Y$.
\end{lemma}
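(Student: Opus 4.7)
The plan is to mimic the standard topological argument sketched immediately before the statement, then invoke Lemma~\ref{irr1} at the last step to promote ``closed irreducible'' to ``algebraic irreducible''. The whole proof splits into three small tasks: (a) showing the poset of irreducible subsets of $Y$ containing a given point has maximal elements; (b) showing these maximal elements are closed; (c) concluding, via Lemma~\ref{irr1}, that they are algebraic, and that they cover $Y$.

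For (a), I would first verify that the union of any chain $\{Z_\alpha\}$ of irreducible subsets of $A^n$ is irreducible. The union is non-empty, and if it sits inside $F_1\cup F_2$ with $F_i$ closed, each $Z_\alpha$ (being irreducible) lies entirely in $F_1$ or in $F_2$; if some $Z_\alpha \not\subseteq F_1$ and some $Z_\beta \not\subseteq F_2$, then one of $Z_\alpha, Z_\beta$ contains the other by the chain property, contradicting its own irreducibility. Since every singleton $\{p\}$, $p\in Y$, is irreducible, Zorn's Lemma produces a maximal irreducible subset $Z_p\subseteq Y$ with $p\in Z_p$.

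For (b), I would use that the closure $\overline{Z_p}$ (in the Zariski topology on $A^n$) of an irreducible set is again irreducible — if $\overline{Z_p}\subseteq F_1\cup F_2$ with $F_i$ closed, then $Z_p\subseteq F_1\cup F_2$, so $Z_p\subseteq F_i$ for some $i$, whence $\overline{Z_p}\subseteq F_i$. Now $Y$ itself is closed and contains $Z_p$, so $\overline{Z_p}\subseteq Y$. Maximality of $Z_p$ among irreducible subsets of $Y$ then forces $Z_p=\overline{Z_p}$, i.e., $Z_p$ is closed.

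For (c), Lemma~\ref{irr1} applies to $Z_p$: a closed irreducible subset of $A^n$ is algebraic over $\A$. Hence each $Z_p$ is a maximal irreducible algebraic subset of $Y$, i.e., an irreducible component. The identity $Y=\bigcup_{p\in Y}\{p\}\subseteq \bigcup_{p\in Y}Z_p\subseteq Y$ finishes the proof. The only delicate point — and the main obstacle, such as it is — is step (b): the fact that a maximal irreducible subset of $Y$ is automatically closed relies crucially on $Y$ being closed, so that $\overline{Z_p}$ cannot escape $Y$ and therefore cannot strictly enlarge $Z_p$ inside $Y$.
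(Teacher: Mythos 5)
Your proof is correct and takes essentially the same route as the paper's: the paper also covers $Y$ by its maximal irreducible subsets, observes that they are closed (the Zorn's lemma argument and the closure-of-an-irreducible-set fact are exactly the general topological remarks stated in the paragraph preceding the lemma), and then invokes Lemma~\ref{irr1} to conclude each component is algebraic. You have simply written out in full the topological steps that the paper cites in passing, including the point that closedness of $Y$ is what keeps $\overline{Z_p}$ inside $Y$.
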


\begin{proof}
Let us cover $Y$ with the induced Zariski topology by its
irreducible components $\{Y_i, i\in I\}$. Each of them is a
maximal irreducible closed subset in $Y$. Then $Y_i$ is closed and
irreducible in $A^n$, and by Lemma~\ref{irr1}, $Y_i$ is an
algebraic set over $\A$.
\end{proof}

\medskip

A more strong result, then Lemma~\ref{irr0}, holds in the
classical algebraic geometry over a field: here every non-empty
closed set is a finite union of irreducible components. In a
general case such result holds if an algebra $\A$ is equationally
Noetherian (we will discuss such algebras in the next
Section~\ref{sec:eq_noeth_algebras}). Anyway, Lemma~\ref{irr0}
shows the importance of studding of irreducible algebraic sets
and, correspondingly, their coordinate algebras.

\begin{lemma}\label{algirr}
Let $Y\subseteq A^n$ be a non-empty algebraic set over $\A$. Then
the following conditions are equivalent:
\begin{itemize}
\item $Y$ is irreducible;
\item $Y$ is not a finite union of proper algebraic
subsets.
\end{itemize}
\end{lemma}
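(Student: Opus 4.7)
The plan is to prove both directions directly from the definitions, using the structure of the Zariski topology pre-basis.

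For the direction that a finite union of proper algebraic subsets forces reducibility, I would argue by induction on the number of terms. If $Y = Y_1 \cup \ldots \cup Y_k$ with $k \geq 2$ and each $Y_i$ a proper algebraic (hence closed) subset of $Y$, set $Z_1 = Y_1$ and $Z_2 = Y_2 \cup \ldots \cup Y_k$. Both $Z_1$ and $Z_2$ are closed, and $Y = Z_1 \cup Z_2$. Since $Y_1 \subsetneq Y$ we have $Y \not\subseteq Z_1$. If $Y \not\subseteq Z_2$ as well, then $Y$ is reducible. Otherwise $Y = Y_2 \cup \ldots \cup Y_k$, and I apply the induction hypothesis to this strictly shorter decomposition. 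The base case $k = 2$ is immediate from the definition of irreducibility.

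For the converse, I would unfold the definition of closed sets in the Zariski topology. By Lemma~\ref{rad}(7), arbitrary intersections of algebraic sets are algebraic, so every closed set in $A^n$ is a finite union of algebraic sets. Now assume $Y$ is reducible: there exist closed $C_1, C_2 \subseteq A^n$ with $Y \subseteq C_1 \cup C_2$, $Y \not\subseteq C_1$ and $Y \not\subseteq C_2$. Write $C_i = W^i_1 \cup \ldots \cup W^i_{k_i}$ with each $W^i_j$ algebraic. Then
\[
Y \;=\; \bigcup_{i,j}\bigl(Y \cap W^i_j\bigr),
\]
and each $Y \cap W^i_j$ is algebraic by Lemma~\ref{rad}(7) (since $Y$ and $W^i_j$ are both algebraic). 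Moreover, if some $Y \cap W^i_j$ equalled $Y$, then $Y \subseteq W^i_j \subseteq C_i$, contradicting $Y \not\subseteq C_i$. Hence every set in this finite union is a proper algebraic subset of $Y$.

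The only step that requires a moment of care is the second direction, because the definition of irreducibility is formulated in terms of arbitrary closed sets, whereas the conclusion concerns only algebraic sets. The key leverage is the topological fact that closed sets in this pre-basis topology are exactly finite unions of algebraic sets (thanks to closure of algebraic sets under arbitrary intersections, Lemma~\ref{rad}(7)), which converts the closed-set decomposition into an algebraic-set decomposition at the cost of enlarging the number of terms~--- harmless since we only need finiteness.
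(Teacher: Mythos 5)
Your first direction is fine and is essentially the paper's (the paper dismisses it with ``follows from the definition''; your induction just makes the reduction to two closed sets explicit). The genuine gap is in the converse, at the step where you assert that every Zariski-closed subset of $A^n$ is a finite union of algebraic sets ``thanks to closure of algebraic sets under arbitrary intersections''. That inference is not valid. A general closed set in a topology given by a pre-basis of closed sets is an \emph{arbitrary intersection of finite unions} of pre-basic sets, and the family of finite unions of algebraic sets need not be closed under arbitrary intersections even though the algebraic sets themselves are (Lemma~\ref{rad}). Distributing $\bigcap_{i\in I}\bigl(Z_{i,1}\cup\ldots\cup Z_{i,k_i}\bigr)$ yields $\bigcup_{f}\bigcap_{i}Z_{i,f(i)}$, a union over all choice functions $f$; when $I$ is infinite this is an infinite union of algebraic sets which in general does not collapse to a finite one. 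It does collapse in Noetherian-flavoured examples, which is why the claim is tempting, but nothing in the hypotheses guarantees it, and the paper itself is careful never to assert it (compare the proof of Lemma~\ref{irr1}, which also works with the full intersection representation).

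The repair is small and is exactly what the paper's own proof does: you do not need $C_1$ and $C_2$ themselves to be finite unions of algebraic sets, only to find one pre-basic set above each of them that still fails to contain $Y$. Write $C_1=\bigcap_{i\in I}Z_i$ and $C_2=\bigcap_{j\in J}W_j$ with every $Z_i$, $W_j$ a finite union of algebraic sets. Since $Y\not\subseteq C_1$, some point of $Y$ misses $C_1$ and hence misses some $Z_{i_0}$, so $Y\not\subseteq Z_{i_0}$; similarly $Y\not\subseteq W_{j_0}$ for some $j_0$. Moreover $Y\subseteq C_1\cup C_2\subseteq Z_{i_0}\cup W_{j_0}$. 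Now run your final paragraph with $Z_{i_0}$ and $W_{j_0}$ in place of $C_1$ and $C_2$: intersecting $Y$ with each algebraic constituent of $Z_{i_0}\cup W_{j_0}$ exhibits $Y$ as a finite union of algebraic subsets, each proper because it is contained in $Y\cap Z_{i_0}\subsetneq Y$ or in $Y\cap W_{j_0}\subsetneq Y$. With that substitution your argument matches the paper's.
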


\begin{proof}
It follows from definition that if $Y$ is a finite union of proper
algebraic subsets then it is reducible. Conversely, let us assume
that an algebraic set $Y$ is reducible, i.e., $Y$ is a union of
two closed proper subsets: $Y=Y_1\cup Y_2$. We can write
$Y_1=\bigcap_{i\in I} Z_i$ and $Y_2=\bigcap_{j\in J} W_j$, where
$Z_i, W_j$ are finite unions of algebraic sets. Thus,
$Y=\bigcap_{i\in I, j\in J} Z_i\cup W_j$. Since $Y\ne Y_1$ and
$Y\ne Y_2$, there exist $i \in I$ and $j\in J$ such $Y\nsubseteq
Z_i$ and $Y\nsubseteq W_j$. Therefore, $Y=(Y \cap Z_i)\cup(Y\cap
W_j)$ is a decomposition of algebraic set $Y$ into a finite union
of proper algebraic subsets.
\end{proof}

\begin{corollary}\label{cor14}
The algebraic set $\V_\A(\At_\L(X))$ is either irreducible or the
empty set. Correspondingly, the trivial algebra $\E$ is either the
coordinate algebra of an irreducible algebraic set or $\E=\Gamma
(\emptyset)$.
\end{corollary}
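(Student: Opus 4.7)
The plan is to reduce the claim to the observation that the coordinate algebra of $Y := \V_\A(\At_\L(X))$ is the trivial algebra $\E$, together with the fact that no proper epimorphism can issue from $\E$; these, combined with Lemmas~\ref{algirr} and~\ref{cor0}, will force $Y$ to be irreducible whenever it is non-empty.

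First I would identify $\Rad(Y) = \At_\L(X)$. The inclusion $\At_\L(X) \subseteq \Rad(Y)$ is tautological from the definition of $Y$: every atomic formula of $\At_\L(X)$ is satisfied at every point of its own solution set. The reverse inclusion is automatic since $\Rad(Y)$ is a subset of $\At_\L(X)$ by definition. Hence the congruence $\theta_{\Rad(Y)}$ collapses every pair of terms in $\Tr_\L(X)$, and $\Gamma(Y) = \T_\L(X)/\theta_{\Rad(Y)} \cong \E$.

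Now assume $Y \ne \emptyset$; I will apply Lemma~\ref{algirr}, so it suffices to show that $Y$ is not a finite union of proper algebraic subsets. Suppose for contradiction that $Y = Y_1 \cup \cdots \cup Y_k$ with every $Y_i \subsetneq Y$ algebraic over $\A$. Since $Y \ne \emptyset$, some $Y_j$ is non-empty. Lemma~\ref{cor0}, applied to the strict inclusion $Y_j \subsetneq Y$, yields a \emph{proper} (that is, surjective but not injective) epimorphism $h \colon \Gamma(Y) \to \Gamma(Y_j)$. But $\Gamma(Y) \cong \E$ has only one element, so every homomorphism out of $\Gamma(Y)$ is automatically injective. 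This contradiction shows $Y$ is irreducible.

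For the second statement: if $Y \ne \emptyset$ then by the above $Y$ is an irreducible algebraic set with $\Gamma(Y) \cong \E$, so $\E$ is the coordinate algebra of an irreducible algebraic set; otherwise $Y = \emptyset$, and Lemma~\ref{emptyset}(1) together with the computation of the radical shows $\emptyset$ is algebraic with $\Gamma(\emptyset) \cong \E$. The argument has no serious obstacle; the only point to keep straight is the direction of the epimorphism in Lemma~\ref{cor0} (from the larger algebraic set to the smaller), which is exactly what makes the triviality of $\Gamma(Y)$ lethal to any proposed proper decomposition.
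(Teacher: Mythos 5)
Your proof is correct, and it follows the route the paper intends: the corollary is stated without proof as an immediate consequence of Lemma~\ref{algirr}, the key point being that $\Rad(\V_\A(\At_\L(X)))=\At_\L(X)$ is already maximal, so no proper algebraic subset (hence no proper decomposition) can exist. Your detour through $\Gamma(Y)\cong\E$ and the proper epimorphism of Lemma~\ref{cor0} is a harmless repackaging of the direct observation that a proper algebraic subset $Y_j\subsetneq Y$ would need $\Rad(Y_j)\supsetneq\At_\L(X)$, which is impossible.
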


\begin{proposition}\label{irrcoo}
Let $\A$ be an algebra in $\L$. Then for a finitely generated
algebra $\C$ of $\L$ the following conditions are equivalent:
\begin{itemize}
\item[1)] $\C$ is discriminated by $\A$;
\item[2)] $\C$ is the coordinate algebra of an irreducible
algebraic set over $\A$ defined by a system of equations in $\L$.
\end{itemize}
\end{proposition}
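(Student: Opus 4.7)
The plan is to prove the two implications separately, using the characterisations already at hand: Lemma~\ref{hom} (points $\leftrightarrow$ homomorphisms), Lemma~\ref{algirr} (irreducibility $\leftrightarrow$ not a finite union of proper algebraic subsets), Proposition~\ref{coo} (separation $\Rightarrow$ coordinate algebra), and Lemma~\ref{emptyset} (the trivial-algebra corner case).

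For the implication $2)\Rightarrow 1)$, assume $\C\cong\Gamma(Y)$ for an irreducible algebraic set $Y\subseteq A^n$. Given a finite subset $W=\{c_1,\dots,c_k\}$ of $C$ with $c_i\ne c_j$ whenever $i\ne j$, represent each $c_i$ as $t_i/\theta_{\Rad(Y)}$ for some term $t_i\in\Tr_\L(X)$. For every pair $i\ne j$, the formula $(t_i=t_j)$ lies outside $\Rad(Y)$, so the set $Y_{ij}=\{p\in Y\mid t_i^\A(p)=t_j^\A(p)\}=\V_\A(\Rad(Y)\cup\{t_i=t_j\})$ is a \emph{proper} algebraic subset of $Y$. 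By Lemma~\ref{algirr}, $Y$ is not a finite union of proper algebraic subsets, so there exists a point $p\in Y\setminus\bigcup_{i\ne j} Y_{ij}$. The associated homomorphism $h_p\colon\Gamma(Y)\to\A$ from Lemma~\ref{hom} satisfies $h_p(c_i)=t_i^\A(p)\ne t_j^\A(p)=h_p(c_j)$ for all $i\ne j$, witnessing discrimination.

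For the implication $1)\Rightarrow 2)$, assume $\C$ is discriminated by $\A$. Then $\C$ is separated by $\A$, so Proposition~\ref{coo} gives $\C\cong\Gamma(Y)$ for some algebraic set $Y=\V_\A(S)$ over $\A$, where $\C\cong\langle X\mid S\rangle$ and by Corollary~\ref{cor4} we have $[S]=\Rad(Y)$. It remains to prove $Y$ is irreducible. If $\C\cong\E$, then discrimination forces the existence of a homomorphism $\E\to\A$, which means $\A$ has a trivial subalgebra, and Lemma~\ref{emptyset}(4) yields the conclusion. Otherwise, assume for contradiction that $Y\ne\emptyset$ decomposes as $Y=Y_1\cup\cdots\cup Y_k$ into proper algebraic subsets. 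For each $i$, strict inclusion $Y_i\subsetneq Y$ gives $\Rad(Y_i)\supsetneq\Rad(Y)$, so we can pick an atomic formula $(t_i=s_i)\in\Rad(Y_i)\setminus\Rad(Y)$. Set $c_i=t_i/\theta_{\Rad(Y)}$, $d_i=s_i/\theta_{\Rad(Y)}$; these are distinct pairs in $\C$. By discrimination there is a homomorphism $h\colon\C\to\A$ injective on $W=\{c_1,d_1,\dots,c_k,d_k\}$. Via Lemma~\ref{hom}, $h$ corresponds to a point $p\in Y$; but $p\in Y_j$ for some $j$, whence $t_j^\A(p)=s_j^\A(p)$, i.e.\ $h(c_j)=h(d_j)$, a contradiction. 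Hence $Y$ is not a finite union of proper algebraic subsets, and Lemma~\ref{algirr} yields irreducibility.

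The main obstacle is the bookkeeping around the trivial algebra $\E$: one must be careful to use Lemma~\ref{emptyset} to reconcile the model-theoretic convention that $\E$ is discriminated by $\A$ exactly when $\A$ has a trivial subalgebra with the geometric fact that $\emptyset$ fails to be algebraic in that case. Once the trivial case is separated off, both directions reduce to a clean interplay between Lemma~\ref{hom} and Lemma~\ref{algirr}.
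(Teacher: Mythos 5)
Your proof is correct and follows essentially the same route as the paper's: both directions rest on the point--homomorphism correspondence of Lemma~\ref{hom} together with decomposing $Y$ into the proper algebraic subsets cut out by the finitely many pairs a homomorphism could fail to separate, with the trivial algebra handled via Lemma~\ref{emptyset}. The only difference is presentational---you argue $2)\Rightarrow 1)$ directly by producing a discriminating point, where the paper argues by contradiction.
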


\begin{proof}
First of all we consider the case when $\C$ is the trivial algebra
$\E$. By definition $\E$ is discriminated by $\A$ if and only if
$\A$ has a trivial subalgebra. At once, by Lemma~\ref{emptyset},
the trivial algebra $\E$ is the irreducible coordinate algebra of
an irreducible algebraic set over $\A$ if and only if $\A$ has a
trivial subalgebra.

Assume that $\C\ne \E$. Let us prove at first implication $2)
\Longrightarrow 1)$. Suppose to the contrary that $\C$ is the
coordinate algebra of an irreducible algebraic set $Y=\V (S)$,
$\C\simeq \T_\L(X)/\theta_{\Rad(Y)}$, and $\C$ is not
discriminated by $\A$. Thus, there exist atomic formulas
$(t_i=s_i)\in \At_{\L} (X)$, $(t_i=s_i)\not\in \Rad(Y)$, $i=1,
\ldots,m$, such that for any homomorphism $h\colon \C\to \A$ there
exists an index $i\in \{1,\ldots,m\}$ with $h(t_i/\theta_{\Rad
(Y)})=h(s_i/\theta_{\Rad (Y)})$. Hence, for any $p\in Y$ there
exists an index $i\in \{1,\ldots,m\}$ with
$t_i^{\A}(p)=s_i^{\A}(p)$. Put $Y_i = \V (S\cup \{t_i=s_i\})$,
$i=1, \ldots,m$. We have $Y=Y_1 \cup \ldots \cup Y_m$, moreover,
the sets $Y_1,\ldots,Y_m$ are proper closed subsets of $Y$. It
contradicts with the irreducibility of $Y$.

Let us prove $1) \Longrightarrow 2)$. Since  $\Dis(\A)\subseteq
\Res (\A)$ then, by Proposition~\ref{coo}, $\C=\Gamma (Y)$ for
some algebraic set $Y$ over $\A$ ($Y \ne \emptyset$, because
$\C\ne \E$). To prove that $Y$ is irreducible it suffices to
reverse the argument above. Indeed, suppose $Y=Y_1 \cup \ldots
\cup Y_m$ for some proper algebraic subsets $Y_i$. From
$Y_i\subsetneq Y$, by Lemma~\ref{rad}, follows that $\Rad
(Y)\subsetneq \Rad (Y_i)$. So, there exists an atomic formula
$(t_i=s_i)\in \Rad (Y_i) \backslash \Rad (Y)$, $i= 1, \ldots,m$.
This implies that there is no any homomorphism $h\colon \C \to \A$
with $h(t_i/\theta_{\Rad (Y)}) \ne h(s_i/\theta_{\Rad (Y)})$ for
all $i=1, \ldots,m$~--- contradiction with  $\C \in \Dis (\A)$.
\end{proof}

\begin{corollary}
The class of all coordinate algebras of irreducible algebraic sets
over an algebra $\A$ coincides with $\Dis (\A)_\omega$.
\end{corollary}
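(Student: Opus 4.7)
The plan is to derive this corollary as an immediate consequence of Proposition~\ref{irrcoo}, together with the observation that every coordinate algebra is finitely generated. Recall that $\Gamma(Y)=\T_\L(X)/\theta_{\Rad(Y)}$, so $\Gamma(Y)$ is generated by the finite set $\{x_i/\theta_{\Rad(Y)}\mid i=1,\ldots,n\}$; thus any coordinate algebra of an algebraic set over $\A$ is automatically a member of the subclass of finitely generated algebras.

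For the forward inclusion, I would start with an arbitrary coordinate algebra $\C\cong\Gamma(Y)$ of an irreducible algebraic set $Y$ over $\A$. By the observation above, $\C$ is finitely generated, so Proposition~\ref{irrcoo} applies and yields $\C\in\Dis(\A)$; hence $\C\in\Dis(\A)_\omega$. Conversely, if $\C\in\Dis(\A)_\omega$, then $\C$ is finitely generated and discriminated by $\A$, so the implication $1)\Longrightarrow 2)$ of Proposition~\ref{irrcoo} shows that $\C$ is isomorphic to the coordinate algebra of an irreducible algebraic set over $\A$ defined by a system of equations in $\L$.

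Since both inclusions go through with no further work, there is essentially no obstacle here: the content has already been absorbed into Proposition~\ref{irrcoo}. The only thing worth flagging is the trivial-algebra boundary case $\C=\E$, which is handled uniformly because Proposition~\ref{irrcoo} treats it explicitly at the start of its proof (via Lemma~\ref{emptyset}), so no separate argument is required in the corollary itself.
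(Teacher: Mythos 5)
Your argument is correct and is exactly how the paper intends the corollary to be read: it is an immediate consequence of Proposition~\ref{irrcoo} once one notes that every coordinate algebra $\Gamma(Y)=\T_\L(X)/\theta_{\Rad(Y)}$ is finitely generated by the images of the variables (the paper in fact gives no separate proof). Your remark on the trivial-algebra case is a sensible sanity check but, as you say, already absorbed into the proposition.
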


\section{Equationally Noetherian algebras}
\label{sec:eq_noeth_algebras}

Let $\A$ be an algebra in a functional language $\L$ and $\B$ an
$\A$-algebra.

\begin{definition}
Algebra $\A$ is called {\em equationally Noetherian} (with respect
to $\L$-equations) if for any positive integer $n$ and any system
of equations $S\subseteq \At_{\L}(x_1,\ldots,x_n)$ there exists a
finite subsystem $S_0 \subseteq S$ such that $\V _\A (S)=\V _\A
(S_0)$.
\end{definition}

If an $\A$-algebra $\B$ is equationally Noetherian with respect to
$\A$-equations we say also that it is $\A$-{\em equationally
Noetherian}.

For a given algebra $\A$: how we can establish is $\A$
equationally Noetherian or not? The natural way to answer this
question is to examine all systems of equations $S$ in order to
check whether $S$ is equivalent to some its finite subsystem or
not. N.\,S.\,Romanovskii has called our attention to the question:
should we check inconsistent systems as well as consistent
systems? As usual, in concrete algebraic structures the
examination of inconsistent systems is trivial. However, the
following problem is natural.

\begin{problem}
Find an algebra $\A$ such that every consistent over $\A$ is
equivalent to its finite subsystem and there exists inconsistent
system which is not equivalent to some finite subsystem over $\A$.
\end{problem}

In~\cite{Shevl1} A.\,N.\,Shevlyakov has constructed an example of
commutative idempotent semigroup $\A$ in the language with
countable set of constants which is ``equationally Noetherian with
respect to consistent systems'', but is not equationally
Noetherian.

The following statement gives the alternative ways to examine
whether $\A$ is equationally Noetherian.

\begin{statement}
For an $\L$-algebra $\A$ the following conditions are equivalent:
\begin{itemize}
\item[1)] $\A$ is equationally Noetherian;
\item[2)] for any finite set $X$ and any system $S\subseteq \At_\L(X)$ there
exists finite system $S_0 \subseteq [S]$ such that
$\V_\A(S)=\V_\A(S_0)$;
\item[3)] for any positive integer $n$ the Zariski topology on  $A^n$ is
Noetherian, i.e., it satisfies the descending chain condition on
closed subsets;
\item[4)] for any positive integer $n$ every chain
$$
Y_1 \; \supset \; Y_2  \; \supset \; Y_3 \; \supset \;\ldots
$$
of distinct algebraic sets in $A^n$ is finite;
\item[5)] every chain
$$
\Gamma (Y_1) \; \to \; \Gamma (Y_2) \; \to \; \Gamma (Y_3) \; \to
\; \ldots
$$
of proper epimorphisms of coordinate algebras of algebraic sets
$Y_1, Y_2, Y_3,\ldots$ over $\A$ is finite;
\item[6)] for any finite set $X$ the set of atomic formulas $\At_\L(X)$ satisfies the
ascending chain condition on radical ideals over $\A$.
\end{itemize}
\end{statement}

\begin{proof}
Implication $1) \Longrightarrow 2)$ is trivial. To show $2)
\Longrightarrow 1)$ note that for every atomic formula $c=(t=s)\in
[S]$ there exists a finite subsystem $S_c\subseteq S$ such that
$S_c \vdash (t=s)$. Therefore, if $\V_\A(S)=\V_\A(S_0)$ for a
finite system $S_0 \subseteq [S]$ then one has
$\V_\A(S)=\V_\A(\bigcup_{c\in S_0} {S_c})$.

Equivalencies $1) \Longleftrightarrow 3)$ and $3)
\Longleftrightarrow 4)$ have been proven in~\cite[Lemma~4.11 and
Remark~4.8]{DMR1}.

Implication $5) \Longrightarrow 4)$ follows from Lemma~\ref{cor0},
and the converse implication $4) \Longrightarrow 5)$~--- from
Lemma~\ref{cor7}. Equivalence $4) \Longleftrightarrow 6)$ follows
from Lemma~\ref{rad}.
\end{proof}

As it has been announced, for equationally Noetherian algebras
Lemma~\ref{irr0} attains the form of well-know theorem from
classical algebraic geometry.

\begin{theorem}[\cite{DMR1}]\label{irr}
Let $\A$ be an equationally Noetherian algebra. Then any non-empty
closed in the Zariski topology subset $Y\subseteq A^n$ (in
particularly, any non-empty algebraic set) is a finite union of
irreducible algebraic sets (irreducible components): $Y=Y_1 \cup
\ldots \cup Y_m$. Moreover, if  $Y_i \not \subseteq Y_j$ for $i\ne
j$ then this decomposition is unique up to a permutation of
components.
\end{theorem}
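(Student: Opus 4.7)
The plan is to run the standard Noetherian-induction argument, using the equivalence $1)\Longleftrightarrow 3)\Longleftrightarrow 4)$ from the previous Statement, which tells us the Zariski topology on $A^n$ satisfies the descending chain condition on closed subsets, and Lemma~\ref{irr1}, which tells us that a closed irreducible set is automatically algebraic.

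For existence, I would let $\mathcal{F}$ be the collection of non-empty closed subsets of $A^n$ that fail to be a finite union of irreducible closed subsets, and suppose for a contradiction that $\mathcal{F}\ne\emptyset$. By the descending chain condition (item~4 of the Statement, applied to closed subsets via the equivalence with item~3), $\mathcal{F}$ contains a minimal element $Y_0$. By definition $Y_0$ cannot itself be irreducible (otherwise it would be a trivial one-term decomposition), so $Y_0=Y_0'\cup Y_0''$ with $Y_0',Y_0''\subsetneq Y_0$ closed. By minimality, neither $Y_0'$ nor $Y_0''$ lies in $\mathcal{F}$, so each is a finite union of irreducible closed sets; combining the two decompositions gives one for $Y_0$, a contradiction. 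This produces $Y=Y_1\cup\ldots\cup Y_m$ with the $Y_i$ closed and irreducible, and by Lemma~\ref{irr1} each $Y_i$ is an algebraic set. Throwing away any $Y_i$ contained in another yields an irredundant decomposition.

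For uniqueness, suppose $Y=Y_1\cup\ldots\cup Y_m=Z_1\cup\ldots\cup Z_k$ are two irredundant decompositions into irreducible algebraic sets. Fix $i$; then $Y_i=\bigcup_{j=1}^{k}(Y_i\cap Z_j)$ is a finite union of closed subsets of $Y_i$, and by irreducibility of $Y_i$ there exists $j$ with $Y_i\subseteq Z_j$. Applying the same argument to $Z_j$ in the other decomposition gives some $i'$ with $Z_j\subseteq Y_{i'}$, hence $Y_i\subseteq Y_{i'}$. Irredundancy of the first decomposition forces $i=i'$, and then $Y_i=Z_j$. This defines an injection from $\{1,\ldots,m\}$ to $\{1,\ldots,k\}$; by symmetry the two sets have the same size and the decompositions agree up to permutation.

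The only real point of care is the very first step: the Statement as quoted gives the descending chain condition on \emph{algebraic} sets (item~4), while the argument for existence asks for the descending chain condition on \emph{closed} sets. These are equivalent via item~3 (Noetherianity of the Zariski topology), so I would cite item~3 rather than item~4 of the Statement when extracting the minimal element of $\mathcal{F}$. Beyond that, the argument is purely topological and does not require any further use of the algebraic structure of $\A$; the work has already been done in establishing that equationally Noetherian $\A$ yields a Noetherian topology and that closed irreducible sets coincide with irreducible algebraic sets.
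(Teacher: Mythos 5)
Your proof is correct: it is the standard Noetherian-induction decomposition of a closed set in a Noetherian topological space into irreducible components, combined with Lemma~\ref{irr1} to convert closed irreducible sets into irreducible algebraic sets, and you rightly flag that the minimal-element argument needs the descending chain condition on closed sets (item~3 of the Statement) rather than only on algebraic sets (item~4). The paper itself gives no proof of this theorem, citing it from~\cite{DMR1}, so there is no in-paper argument to compare against; your route is the expected one, and every ingredient you invoke (the equivalence of items~1, 3 and~4 of the Statement, and Lemma~\ref{irr1}) is available at this point in the text.
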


Denote by $\EN$ the class of all equationally Noetherian
$\L$-algebras. In Section~\ref{sec:preliminaries} it is presented
the list of operators. What operators from that list image $\EN$
to $\EN$?

\begin{statement}\label{noe}
Let $\A$ be an equationally Noetherian $\L$-algebra. Then the
following $\L$-algebras are equationally Noetherian too:
\begin{itemize}
\item[1)] every subalgebra of $\A$;
\item[2)] every filterpower, direct power, ultrapower of $\A$;
\item[3)] the coordinate algebra $\Gamma (Y)$ of an algebraic set $Y$ over
$\A$;
\item[4)] every algebra which is separated or discriminated by $\A$;
\item[5)] every algebra from $\qvar (\A)$, $\ucl (\A)$;
\item[6)] every limit algebra over $\A$;
\item[7)] every finitely generated algebra defined by a complete atomic type in the theory $\Th _{\rm qi} (\A)$ or $\Th _{\forall} (\A)$.
\end{itemize}
\end{statement}

\begin{proof}
Item~1) is obvious. Item~2) follows from Lemma~\ref{up}. Item~3)
follows from items~1),~2) and Proposition~\ref{coo}. Item~4) is
true because of $\Dis(\A)\subseteq\Res(\A)=\S\P(\A)$~\cite{DMR1}.
Item~5) follows from
$\ucl(\A)\subseteq\qvar(\A)=\S\Pf(\A)\e$~\cite{Malcev}. Every
limit algebra over $\A$ embeds into an ultrapower of
$\A$~\cite[Corollary~5.7]{DMR1}, that proves item~6). Every
finitely generated algebra defined by a complete atomic type in
the theory $\Th _{\rm qi} (\A)$ (or $\Th _{\forall} (\A)$) belongs
to $\qvar (\A)$ (or $\ucl(\A)$)~\cite[Lemma~4.7]{DMR1}. Thus, we
have item~7).
\end{proof}

So, the class $\EN$ is closed under ultrapowers.

\begin{problem}
Is the class $\EN$ closed under ultraproducts?
\end{problem}

As $\EN$ is closed under taking subalgebras the problem above is
equivalent to the following problem~\cite{Malcev}.

\begin{problem}
Is the class $\EN$ axiomatizable?
\end{problem}

The negative solution of this problem has been presented
in~\cite{Shevl1} for the class of equationally Noetherian
commutative idempotent semigroups in the language with countable
set of constants.

\begin{example}[positive examples]
Every algebra $\A$ in the list below is $\A$-equationally
Noetherian:
\begin{itemize}
\item a Noetherian commutative ring;
\item a linear group over a Noetherian
ring (in particular, a free group, a polycyclic group, a finitely
generated metabelian group)~\cite{BMR1, Bryant, Guba};
\item a torsion-free hyperbolic group~\cite{Sela3};
\item a free solvable group~\cite{GRom};
\item a finitely generated metabelian (or nilpotent) Lie
algebra~\cite{Daniyarova1}.
\end{itemize}
About equationally Noetherian property for the universal
enveloping algebras of wreathe products of abelian Lie algebras
see~\cite{RomSh}.
\end{example}

\begin{example}[negative examples]
The following algebras are not equationally Noetherian:
\begin{itemize}
\item infinitely generated nilpotent groups~\cite{MR2};
\item the wreath product $A\wr B$ of a non-abelian group $A$ and an infinite group $B$~\cite{BMRom};
\item the min-max structures $\mathcal{M}_\mathbb{R}=\langle \mathbb{R}; {\max},
{\min}, {\cdot}, \allowbreak {+}, {-}, 0, 1\rangle$ and \\
$\mathcal{M}_\mathbb{N}=\langle \mathbb{N}; {\max}, {\min},
\allowbreak {+}, 0, 1\rangle$~\cite{DvorKot}.
\end{itemize}
\end{example}

\begin{lemma}\label{trucl}
Let $\A$ be an equationally Noetherian $\L$-algebra. The universal
closure $\ucl (\A)$ contains the trivial algebra $\E$ if and only
if $\A$ has a trivial subalgebra.
\end{lemma}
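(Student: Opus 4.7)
The plan is to prove the two implications separately, with the backward direction being completely routine and not requiring the equationally Noetherian hypothesis: if $\A$ contains a trivial subalgebra $\E'$, then $\E \cong \E' \in \S(\A) \subseteq \S\Pu(\A) = \ucl(\A)$.

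For the forward direction, the cleanest route is to invoke Theorem~A. Since $\E$ is finitely generated (by its unique element) and $\A$ is equationally Noetherian by hypothesis, Theorem~A applies and gives the equivalence of conditions~1) and~4): $\E \in \ucl(\A)$ is equivalent to $\E$ being discriminated by $\A$. But the remark following the definition of discrimination in Section~\ref{sec:preliminaries} observes that $\E$ is discriminated by $\A$ precisely when there exists a homomorphism $h\colon\E \to \A$, and the image of such a homomorphism is, by definition, a trivial subalgebra of $\A$. This yields the desired trivial subalgebra.

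If one wishes to avoid citing the full Unification Theorem~A, essentially the same argument can be run directly: by Proposition~\ref{logirr}, $\E \in \ucl(\A)$ provides an embedding of $\E$ into some ultrapower $\A^I/D$. Consider the (infinite) system $S = \{t(x) = x \mid t \in \Tr_\L(\{x\})\} \subseteq \At_\L(\{x\})$; the equationally Noetherian property furnishes a finite equivalent subsystem $S_0 \subseteq S$ with $S \sim_\A S_0$. The image of $\E$ witnesses $\V_{\A^I/D}(S_0) \ne \emptyset$, so the existential sentence $\exists x \bigwedge_{(t=t')\in S_0}(t(x)=t'(x))$ is true in $\A^I/D$, and by {\L}o\'s's theorem it descends to $\A$. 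Any witness $a \in A$ lies in $\V_\A(S_0) = \V_\A(S)$, meaning $t(a)=a$ for every unary term $t$, so $\{a\}$ is a trivial subalgebra of $\A$. The only subtle point is that the Noetherian hypothesis is exactly what allows one to replace the non-first-order ``trivial subalgebra'' condition by a single first-order existential formula, so that the transfer between $\A$ and its ultrapower is legitimate.
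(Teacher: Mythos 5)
Your backward direction and your second, self-contained argument are both correct, but your preferred first route is circular within this paper. The equivalence $1)\Leftrightarrow 4)$ of Theorem~A in the special case $\C=\E$ is exactly the part that was omitted in~\cite{DMR1} and is supplied here: the paper's proof of Theorem~A reads ``By Lemma~\ref{trucl}, $\A$ has a trivial subalgebra if and only if $\E\in\ucl(\A)$. Thus, items 1), 4), 7) are equivalent.'' So you cannot invoke that equivalence to prove Lemma~\ref{trucl} without begging the question; the lemma exists precisely to patch that case of Theorem~A.

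Your fallback argument is sound and is essentially the paper's proof read in the contrapositive-free direction. The paper argues: if $\A$ has no trivial subalgebra then (Lemma~\ref{emptyset}) $\V_\A(\At_\L(x))=\emptyset$, Noetherianity extracts a finite $S_0$ with $\V_\A(S_0)=\emptyset$, and the universal sentence $\forall x\,\bigvee_{(t=s)\in S_0}t(x)\ne s(x)$ holds in $\A$ but fails in $\E$, so $\E\notin\ucl(\A)$. Your version uses the same key step --- Noetherianity turns the infinite one-variable system (your $\{t(x)=x\}$ and the paper's $\At_\L(x)$ have the same solution set) into a finite one, making ``has a trivial subalgebra'' first-order --- but then routes through an ultrapower embedding and {\L}o\'s. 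That detour is harmless but unnecessary: from $\E\in\ucl(\A)$, i.e.\ $\Th_\forall(\A)\subseteq\Th_\forall(\E)$, the universal sentence above is false in $\E$, hence false in $\A$, and any witness of its negation $\exists x\,\bigwedge_{(t=s)\in S_0}t(x)=s(x)$ lies in $\V_\A(S_0)=\V_\A(S)$ and so generates a trivial subalgebra. I would also recommend citing Lemma~\ref{emptyset} (or reproving its easy half) to justify that the one-variable system is where the finite subsystem should be sought; with that, your second argument stands as a complete proof.
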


\begin{proof}
It is clear that $\E\leq \A$ implies $\E \in \ucl (\A)$. Suppose
$\A$ has not a trivial subalgebra. By Lemma~\ref{emptyset}, $\V
(\At_\L(x))=\emptyset$. Hence, there exists a finite system $S_0
\subset \At_\L (x)$ such that $\V (S_0)=\emptyset$, i.e., the
following universal formula
\begin{equation}\label{eq:e}
\forall \; x\quad \left(\bigvee_{(t=s)\in S_0} {t(x)\ne s(x)}
\right)
\end{equation}
holds in $\A$. However,~\eqref{eq:e} is false in $\E$, so $\E
\not\in \ucl (\A)$.
\end{proof}

\section{The theorem on duality of the category of algebraic sets and the category of coordinate algebras} \label{sec:dual}

Let $\L$ be a functional language and $\A$ an algebra in $\L$.

In Subsection~\ref{subsec:categories} we introduce two categories:
the category $\AS (\A)$ of algebraic sets over $\A$ and the
category $\CA (\A)$ of coordinate algebras of algebraic sets over
$\A$. In Subsection~\ref{subsec:theorem_dual_equiv} we prove that
these categories are dually equivalent (Theorem~\ref{dual}). In
Subsection~\ref{subsec:classification} we discuss how
Theorem~\ref{dual} is useful when classifying algebraic sets.

\subsection{The category of algebraic sets and the category of coordinate algebras}\label{subsec:categories}

Objects of $\CA (\A)$ are all coordinate algebras of algebraic
sets over $\A$. For two coordinate algebras $\C_1$ and $\C_2$ in
$\CA (\A)$ the set of morphisms $\Mor(\C_1,\C_2)$ coincides with
the set $\Hom (\C_1,\C_2)$ of all $\L$-homomorphisms from $\C_1$
into $\C_2$. Note that the trivial algebra $\E$ is the terminal
object in $\CA (\A)$. It means that for every object $\C$ in $\CA
(\A)$ there is exactly one morphism from $\C$ to $\E$.

Objects of $\AS (\A)$ are all algebraic sets over $\A$. To define
morphisms in $\AS (\A)$ we need the notion of a term map.

\begin{definition}
A map $\varphi\colon A^n \to A^m$ is called a {\em term map} if
there exist terms $t_1,\ldots,t_m \in \Tr_\L (x_1,\ldots,x_n)$
such that
\begin{equation}\label{varphi}
\varphi
(a_1,\ldots,a_n)=(\,t^\A_1(a_1,\ldots,a_n)\,,\,\ldots\,,\,t^\A_m(a_1,\ldots,a_n)\,)
\end{equation}
for all $(a_1,\ldots,a_n)\in A^n$. For two non-empty algebraic
sets $Y\subseteq A^n$ and $Z\subseteq A^m$ a map $\varphi\colon Y
\to Z$ is called a {\em term map} if it is a restriction on $Y$ of
some term map $\varphi\colon A^n \to A^m$ such that $\varphi (Y)
\subseteq Z$.
\end{definition}

\begin{remark}
Note that a term map $\psi\colon A^n\to A^m$ defined by terms
$s_1,\ldots,s_m \in \Tr_\L (x_1,\ldots,x_n)$ may induce the same
term map $\psi\colon Y\to Z$ as $\varphi\colon Y \to Z$ above. It
happens if and only if $(t_i=s_i)\in \Rad (Y)$ for all
$i=1,\ldots,m$. Thereby, any term map $\varphi\colon Y \to Z$ is
uniquely defined by an ordering set of term functions
$t^Y_1,\ldots,t^Y_m \in \Tf (Y)$ such that
$(t^Y_1(p),\ldots,t^Y_m(p))\in Z$ for all $p\in Y$.
\end{remark}

We put the family $\Mor (Y, Z)$ of morphism from object $Y$ to
object $Z$ in $\AS (\A)$ coincides with the set of all term maps
$\varphi\colon Y \to Z$. Furthermore, $\id _Y$ is the identical
map on $Y$. If the empty set $\emptyset$ is algebraic over $\A$ we
place it into $\AS (\A)$ as the initial object. It means that for
every object $Y$ in $\AS (\A)$ there is exactly one arrow
(morphism) from $\emptyset$ to $Y$.

As usual, one can define the notion of an isomorphism in the
categories $\CA (\A)$ and $\AS (\A)$. Thus, algebraic sets
$Y\subseteq A^n$ and $Z\subseteq A^m$ are isomorphic if and only
if there exist term maps $\varphi\colon Y \to Z$ and $\psi\colon Z
\to Y$ such that $\psi \circ \varphi =\id_Y$ and $\varphi \circ
\psi=\id_Z$.

\begin{example}
Let $\L=\{\cdot, ^{-1}, e\}$ be the language of groups, $G$ a
group and $Y\subseteq G^n$ an algebraic set over $G$ for a system
of equations in the extended language $\L_G$ (with coefficients in
$G$). Then for every element $h\in G$ the shift
$$
Yh=\{(g_1 h,\ldots, g_n h) \mid (g_1,\ldots, g_n) \in Y\}
$$
of $Y$ is an algebraic set over $G$ which is $\L_G$-isomorphic to
 $Y$. Indeed, if $Y=\V (S(x_1,\ldots,x_n))$ then
$Yh=\V(S(x_1h^{-1},\ldots,x_nh^{-1}))$. Isomorphism between $Y$
and $Yh$ is established by term maps $\varphi, \psi\colon G^n\to
G^n$:
$$
\varphi (g_1,\ldots, g_n)=(g_1 h,\ldots, g_n h), \quad \psi
(g_1,\ldots, g_n)=(g_1 h^{-1},\ldots, g_n h^{-1}).
$$
It is evident that $\varphi \circ \psi =\id _{G^n}$ and $\psi
\circ \varphi = \id _{G^n}$.
\end{example}

\begin{lemma}\label{termmap}
Let $\varphi\colon A^n \to A^m$ be a term map. Then the following
holds:
\begin{itemize}
\item[1)] If $Z$ is an algebraic set in $A^m$ then
$\varphi^{-1}(Z)$ is an algebraic set in $A^n$.
\item[2)] The map $\varphi$ is continuous in the Zariski topology.
\item[3)] If $Y$ is an irreducible subset in $A^n$ then $\varphi (Y)$ is an irreducible subset in
$A^m$.
\item[4)] Isomorphic algebraic sets are irreducible and reducible
simultaneously.
\end{itemize}
\end{lemma}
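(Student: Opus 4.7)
The plan is to handle the four items in order, with item (1) doing the real work and (2)--(4) following almost formally.

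For item (1), I would write $\varphi$ explicitly via the defining terms $t_1,\dots,t_m\in\Tr_\L(x_1,\dots,x_n)$ and express $Z$ as $\V_\A(S)$ for some $S\subseteq\At_\L(y_1,\dots,y_m)$. The natural guess is that $\varphi^{-1}(Z)=\V_\A(S')$ where $S'\subseteq\At_\L(x_1,\dots,x_n)$ is obtained by substituting $y_i\mapsto t_i$ into each equation of $S$:
$$
S'\;=\;\bigl\{\,u(t_1,\dots,t_m)=v(t_1,\dots,t_m)\;\bigm|\;(u=v)\in S\,\bigr\}.
$$
A point $p\in A^n$ satisfies $S'$ iff for every $(u=v)\in S$ one has $u^\A(t_1^\A(p),\dots,t_m^\A(p))=v^\A(t_1^\A(p),\dots,t_m^\A(p))$, which is exactly the statement that $\varphi(p)\in\V_\A(S)=Z$.

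For item (2), I would use that by the definition of the Zariski topology on $A^n$ closed sets are arbitrary intersections of finite unions of algebraic sets. Since taking preimages under $\varphi$ commutes with both unions and intersections, and item (1) says preimages of algebraic sets are algebraic, it follows immediately that preimages of closed sets are closed. For item (3), suppose $\varphi(Y)\subseteq Z_1\cup Z_2$ with $Z_1,Z_2$ closed in $A^m$; then $Y\subseteq\varphi^{-1}(Z_1)\cup\varphi^{-1}(Z_2)$, both summands are closed by (2), and irreducibility of $Y$ forces $Y\subseteq\varphi^{-1}(Z_i)$ for some $i$, i.e.\ $\varphi(Y)\subseteq Z_i$; non-emptiness of $\varphi(Y)$ is inherited from $Y$. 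For item (4), given an isomorphism $\varphi\colon Y\to Z$ with inverse term map $\psi\colon Z\to Y$, the map $\varphi$ is a bijection with $\varphi(Y)=Z$, so item (3) applied in both directions transfers irreducibility from $Y$ to $Z$ and back, giving the equivalence.

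The main point of care is item (1): one must check that the substitution produces a legitimate system of atomic formulas in $\At_\L(x_1,\dots,x_n)$ (which is immediate since substituting terms into terms yields terms) and that the set-theoretic identity $\varphi^{-1}(\V_\A(S))=\V_\A(S')$ really holds; everything else is formal consequence of (1), the definition of the Zariski topology, and the definition of irreducibility. I do not anticipate a serious obstacle.
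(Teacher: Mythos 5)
Your proposal is correct and follows essentially the same route as the paper: item (1) by substituting the defining terms into the equations of $Z$ to get $\varphi^{-1}(\V_\A(S))=\V_\A(S')$, and items (2)--(4) as formal consequences via the pre-basis description of the Zariski topology, the standard fact about continuous images of irreducible sets, and the two-sided application to an isomorphism. No issues.
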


\begin{proof}
Let $t_1,\ldots,t_m$ be the terms from~\eqref{varphi}. Suppose
that $Z=\V (S')$, where $S'\subseteq \At_\L(x'_1,\ldots,x'_m)$.
Taking $S=S'(t_1(x_1,\ldots,x_n),\ldots,t_m(x_1,\ldots,x_n))$ we
have $\varphi^{-1}(Z)=\V(S)$. It proves item~1). Item~2) follows
from item~1), since algebraic sets form a closed pre-base of the
Zariski topology. Every continuous map between topological spaces
images irreducible sets into irreducible ones, so we have item~3).
Item~4) easy follows from item~3).
\end{proof}

The following result takes place in Diophantine algebraic
geometry. Its proof is similar to the proof of the corresponding
result in the classical algebraic geometry over a
field~\cite{Shafarevich}.

\begin{lemma}
Let $\A$ be an $\L$-algebra and $Y\subseteq A^n$, $Z\subseteq A^m$
algebraic sets over $\A$ defined by systems of equations with
coefficients in $\A$. The algebraic set $Y\times Z$ is irreducible
if and only if $Y$ and $Z$ are irreducible (irreducibility is
considered with respect to the Zariski topology for $\A$ as
$\L_\A$-algebra).
\end{lemma}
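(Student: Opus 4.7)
The plan is to handle the two directions separately, exploiting the fact that in Diophantine algebraic geometry (over the language $\L_\A$) every element of $A$ may appear as a constant symbol, so all the natural ``slice'' and ``projection'' maps between products are term maps.

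\textbf{Necessity.} I would start from the easy direction. The two projections $\pi_Y\colon Y\times Z\to Y$ and $\pi_Z\colon Y\times Z\to Z$ are coefficient-free term maps (given by the projection tuples of variables). If $Y\times Z$ is irreducible, then in particular it is non-empty, and $\pi_Y(Y\times Z)=Y$, $\pi_Z(Y\times Z)=Z$. By Lemma~\ref{termmap}(3), continuous images of irreducible sets are irreducible, so $Y$ and $Z$ are irreducible.

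\textbf{Sufficiency.} Assume $Y$ and $Z$ are irreducible, and suppose $Y\times Z\subseteq F_1\cup F_2$ with $F_1,F_2$ closed in the Zariski topology on $A^{n+m}$. The key observation is that for every point $y=(a_1,\ldots,a_n)\in Y$ the inclusion $\iota_y\colon Z\to A^{n+m}$, $\iota_y(z)=(c_{a_1},\ldots,c_{a_n},z_1,\ldots,z_m)$, is an $\L_\A$-term map, and it is an isomorphism of $Z$ onto the slice $\{y\}\times Z$. Hence every slice $\{y\}\times Z$ is irreducible, and the decomposition $\{y\}\times Z\subseteq F_1\cup F_2$ forces $\{y\}\times Z\subseteq F_1$ or $\{y\}\times Z\subseteq F_2$. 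Symmetrically, for each $z\in Z$ the injection $\kappa_z\colon Y\to A^{n+m}$, $\kappa_z(y)=(y_1,\ldots,y_n,c_{b_1},\ldots,c_{b_m})$ (where $z=(b_1,\ldots,b_m)$), is an $\L_\A$-term map.

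Now partition $Y$ by
\[
Y_i \;=\; \{\,y\in Y \;\mid\; \{y\}\times Z\subseteq F_i\,\}
\;=\; \bigcap_{z\in Z}\kappa_z^{-1}(F_i), \qquad i=1,2.
\]
By Lemma~\ref{termmap}(1) each $\kappa_z^{-1}(F_i)$ is closed in $Y$ (preimages of closed sets under continuous term maps are closed), so $Y_i$ is an intersection of closed sets, hence closed in $Y$. By the slice argument $Y=Y_1\cup Y_2$, so irreducibility of $Y$ gives $Y=Y_i$ for some $i$, and therefore $Y\times Z\subseteq F_i$. This shows $Y\times Z$ is irreducible (it is non-empty because $Y,Z$ are).

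\textbf{Expected obstacle.} The only delicate point is verifying that the sets $Y_i$ are closed; everything else is formal. This is precisely where the assumption that the equations defining $Y$ and $Z$ admit coefficients in $\A$ is used, since it is needed to make the injections $\iota_y$ and $\kappa_z$ term maps in the language $\L_\A$. Over the coefficient-free language these ``constant-insertion'' maps are not generally term maps, which is exactly why the statement is restricted to the Diophantine setting.
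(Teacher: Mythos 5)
Your proof is correct, and on the substantive direction it is the same slice argument the paper uses: show each slice $\{y\}\times Z$ is irreducible, conclude it lies entirely in one of the two closed sets, and then verify that the resulting pieces $Y_i$ of $Y$ are closed. The differences are in the supporting details. For the easy direction the paper argues by contraposition: if $Y=Y_1\cup\ldots\cup Y_d$ is a union of proper algebraic subsets, then $Y\times Z=\bigcup_i (Y_i\times Z)$ is too (invoking Lemma~\ref{algirr} to work with algebraic rather than merely closed sets); your projection-plus-continuous-image argument via Lemma~\ref{termmap} is a clean alternative. For the closedness of $Y_i$ the paper fixes $p'\in Z$, observes that $Y_{i,p'}\times\{p'\}=(Y\times\{p'\})\cap W_i$ is algebraic and deduces that $Y_{i,p'}$ is algebraic, then intersects over $p'$; you instead write $Y_i=\bigcap_{z\in Z}\kappa_z^{-1}(F_i)$ and quote closedness of preimages under term maps, which makes the same point more transparently (though the continuity statement you need is item~2) of Lemma~\ref{termmap}, not item~1)). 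Your closing remark correctly identifies where the coefficient hypothesis enters: the constant-insertion maps $\iota_y$, $\kappa_z$ are term maps only in $\L_\A$, which is exactly why the paper restricts the lemma to the Diophantine setting.
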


\begin{proof}
Suppose that $Y$ is a reducible algebraic set, i.e., $Y$ is a
finite union of proper algebraic subsets: $Y=Y_1\cup \ldots \cup
Y_d$. It follows that $Y \times Z = (Y_1 \times Z) \cup \ldots
\cup (Y_d \times Z)$ is a decomposition of $Y\times Z$ into the
finite union of proper algebraic subsets, so $Y\times Z$ is
reducible.

Assume now that $Z$ is an irreducible algebraic set and $Y \times
Z = W_1 \cup \ldots \cup W_d$ is a decomposition of $Y\times Z$
into a finite union of proper algebraic subsets. We show that $Y$
is reducible in this case.

Every point $p\in A^n$ forms an algebraic set $\{p\}$ over $\A$.
Moreover, algebraic sets $Z$ and $\{p\}\times Z$ are isomorphic.
In particularly, $\{p\}\times Z$ is an irreducible algebraic set.
If $p\in Y$ then $\{p\}\times Z \subseteq W_1 \cup \ldots \cup
W_d$. It implies that $\{p\}\times Z \subseteq W_i$ for some $i\in
\{1,\ldots,d\}$. Denote by $Y_i$ the set $\{p\in Y | \{p\}\times Z
\subseteq W_i\}$, $i=\overline{1,d}$. One has $Y=Y_1 \cup \ldots
\cup Y_d$ and $Y \ne Y_i$ for all $i=\overline{1,d}$.

Let us check that $Y_i$ is an algebraic set for each
$i=\overline{1,d}$. For a point $p'\in Z$ denote by $Y_{i,p'}$ the
set $\{p\in Y | \{p\}\times\{p'\}\subseteq W_i\}$. As the set
$(Y\times \{p'\})\cap W_i$ is algebraic and $Y_{i,p'}\times
\{p'\}=(Y\times \{p'\})\cap W_i$, therefore $Y_{i,p'}$ is an
algebraic set. Finally, note that $Y_i=\bigcap_{p'\in
Z}{Y_{i,p'}}$. Hence, $Y_i$ is an algebraic set.
\end{proof}

\subsection{The theorem on dual equivalence} \label{subsec:theorem_dual_equiv}

This subsection is required the basic notions and ideas of
category theory. We refer to~\cite{Barr} in every way.

\begin{theorem}\label{dual}
The category $\AS (\A)$ of algebraic sets over an algebra $\A$ and
the category $\CA (\A)$ of coordinate algebras of algebraic sets
over $\A$ are dually equivalent.
\end{theorem}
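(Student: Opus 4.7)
The plan is to construct two contravariant functors $\Gamma\colon \AS(\A)\to \CA(\A)$ and $\V\colon \CA(\A)\to \AS(\A)$ and exhibit natural isomorphisms $\V\circ \Gamma \cong \id_{\AS(\A)}$ and $\Gamma\circ\V \cong \id_{\CA(\A)}$. On objects the action of each functor is already named by the notation: $\Gamma$ sends an algebraic set $Y$ to its coordinate algebra $\Gamma(Y)$, and $\V$ sends a coordinate algebra $\C\in\CA(\A)$ to an algebraic set realising it. The case of the empty set and of the trivial algebra is handled separately using Lemma~\ref{emptyset}: if $\emptyset$ is algebraic we place $\Gamma(\emptyset)=\E$ (terminal in $\CA(\A)$), and conversely $\V(\E)$ is $\emptyset$; if $\A$ has a trivial subalgebra then $\emptyset\notin\AS(\A)$ but $\E\in\CA(\A)$ corresponds under $\V$ to the singleton algebraic set given by the trivial subalgebra.

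Now I would define $\Gamma$ on morphisms. A term map $\varphi\colon Y\to Z$ between non-empty algebraic sets $Y\subseteq A^n$, $Z\subseteq A^m$ is given by terms $t_1,\ldots,t_m\in \Tr_\L(x_1,\ldots,x_n)$ with $\varphi(Y)\subseteq Z$. Writing $\Gamma(Z)=\T_\L(x'_1,\ldots,x'_m)/\theta_{\Rad(Z)}$, I define $\Gamma(\varphi)\colon\Gamma(Z)\to\Gamma(Y)$ by
\[
x'_i/\theta_{\Rad(Z)}\;\longmapsto\; t_i/\theta_{\Rad(Y)}.
\]
To check this is well defined, it suffices to verify that for every $(s_1=s_2)\in\Rad(Z)$ one has $(s_1(\bar t)=s_2(\bar t))\in\Rad(Y)$; this follows because $\varphi(Y)\subseteq Z$, so the equation $s_1=s_2$ holds on $\varphi(Y)$. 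The independence on the choice of representing tuple $(t_1,\ldots,t_m)$ is recorded by the remark after Lemma~\ref{termmap}. Functoriality $\Gamma(\psi\circ\varphi)=\Gamma(\varphi)\circ\Gamma(\psi)$ is then just composition/substitution of terms, and $\Gamma(\id_Y)=\id_{\Gamma(Y)}$.

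For $\V$ on morphisms, take $h\colon\C_1\to\C_2$ in $\CA(\A)$. Using Proposition~\ref{coo} together with Lemma~\ref{hom}, fix presentations $\C_i=\langle X_i\mid S_i\rangle$ realising $\C_i=\Gamma(\V_\A(S_i))$ (so $\V(\C_i):=\V_\A(S_i)$ has points in bijection with $\Hom(\C_i,\A)$). Then $h$ induces the precomposition map $\Hom(\C_2,\A)\to\Hom(\C_1,\A)$, which I want to realise as a term map $\V(h)\colon \V(\C_2)\to \V(\C_1)$. Concretely, if $X_1=\{x_1,\ldots,x_n\}$ and $h(x_i)=t_i(\bar y)$ for terms $t_i\in\Tr_\L(X_2)$ using the generators $\bar y=X_2$ of $\C_2$, I set
\[
\V(h)(b_1,\ldots,b_m)=\bigl(t^\A_1(\bar b),\ldots,t^\A_n(\bar b)\bigr).
\]
Lemma~\ref{hom} guarantees this lands in $\V(\C_1)$, and independence from the choice of lift of $h$ follows because two choices differ by an element of $[S_2]\subseteq \Rad(\V(\C_2))$.

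Finally I would verify the two natural isomorphisms. For $Y\subseteq A^n$, by Lemma~\ref{rad} we have $\V(\Gamma(Y))=\V_\A(\Rad(Y))=Y$, so on objects $\V\circ\Gamma$ is the identity; naturality on a term map $\varphi$ reduces to tracking terms through the two constructions and uses that every term map is determined by its tuple of coordinate term functions. In the other direction, for $\C$ with presentation $\langle X\mid S\rangle$, Corollary~\ref{cor4} gives $\Rad_\A(\V_\A(S))=[S]$, hence
\[
\Gamma(\V(\C))=\T_\L(X)/\theta_{\Rad(\V_\A(S))}=\T_\L(X)/\theta_{[S]}\cong\C,
\]
and this isomorphism is natural in $\C$. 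The main obstacle I anticipate is not any single computation but rather the bookkeeping: the definition of $\V$ depends on a choice of presentation, and one must check that all constructions are invariant under change of presentation (equivalently, that they depend only on the isomorphism class of $\C$), together with honestly verifying naturality on morphisms in the two squares. The empty set / trivial algebra boundary case is the secondary subtlety, resolved by item $3)$ of Lemma~\ref{emptyset} together with Corollary~\ref{cor14}.
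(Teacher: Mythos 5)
Your argument is correct, and its mathematical core coincides with the paper's: the contravariant functor sending $Y$ to $\Gamma(Y)$ and a term map given by $t_1,\ldots,t_m$ to the homomorphism $x'_i/\theta_{\Rad(Z)}\mapsto t_i/\theta_{\Rad(Y)}$, with the same well-definedness check via $\varphi(Y)\subseteq Z$. The difference is in how the equivalence itself is certified. The paper never constructs a quasi-inverse functor $\V$: it shows the single functor $\F=\Gamma$ is fully faithful (distinct term maps $Y\to Z$ differ in some coordinate term function, and conversely any homomorphism $\Tf(Z)\to\Tf(Y)$ is determined on the generators $x^Z_1,\ldots,x^Z_m$ by term functions, which assemble into a term map landing in $Z$ precisely because the homomorphism respects $\Rad(Z)$) and representative, then invokes that criterion for dual equivalence. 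This sidesteps exactly what you flag as the main obstacle: defining $\V$ on objects forces a global choice of presentation for each coordinate algebra, and functoriality of $\V$ plus naturality of the two isomorphisms must then be checked against that choice. Your route pays that cost but buys an explicit quasi-inverse (which is what one actually uses in practice, cf.\ Lemma~\ref{hom}); all the verifications do go through, resting on $[S]=\Rad_\A(S)$ from Corollary~\ref{cor4} and $Y=\V_\A(\Rad(Y))$ from Lemma~\ref{rad}, as you indicate. One small imprecision in your boundary case: when $\A$ has a trivial subalgebra, $\V(\E)=\V_\A(\At_\L(X))$ is the set of constant tuples $(a,\ldots,a)$ with $\{a\}$ a trivial subalgebra of $\A$, which need not be a singleton; it is nonetheless a well-defined algebraic set with coordinate algebra $\E$, which is all the argument requires.
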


\begin{proof}
To prove the theorem we need to construct a contravariant functor
$\F\colon \AS (\A) \to \CA (\A)$, i.e., a map such that
\begin{itemize}
\item[F1)] if $\varphi\colon Y \to Z$ is a morphism of $\AS (\A)$ then $\F(\varphi) \colon \F(Z) \to
\F(Y)$ is a morphism of $\CA (\A)$;
\item[F2)] $\F(\id _Y)=\id_{\F(Y)}$ for every object $Y$ of $\AS (\A)$;
\item[F3)] if $\psi \colon Z \to W$ is a morphism of $\AS (\A)$ then $\F(\psi \circ
\varphi)=\F(\varphi) \circ \F(\psi)$.
\end{itemize}
After that we need to show that $\F$ is a dual equivalence. There
are several equivalent definitions of dual equivalence
in~\cite{Barr}. We take one the most convenient for our needs. A
functor $\F$ is a dual equivalence if
\begin{itemize}
\item[E1)] $\F$ is fully faithful, i.e., for every objects $Y$ and $Z$ of $\AS (\A)$
and every morphism $h\in\Hom (\F(Z), \F(Y))$ there is one and only
one morphism $\varphi \in \Hom (Y,Z)$ such that $h=\F(\varphi)$;
\item[E2)] $\F$ is representative, i.e., for any object $\C$ of $\CA (\A)$ there is an object $Y$ of
$\AS (\A)$ for which $\F(Y)$ is isomorphic to $\C$.
\end{itemize}

To define the functor $\F$ we put $\F(Y)=\Gamma (Y)$ for an
algebraic set $Y$ of $\AS (\A)$. Also we have to define $\F$ on
morphisms. Let $Y$ and $Z$ are objects of $\AS (\A)$. If
$Y=\emptyset$ then $\Gamma (Y)=\E$. Furthermore, $\Mor (Y,Z)$ has
a unique arrow $\varphi$ and $\Hom (\Gamma (Z), \E)$ has a unique
morphism $h$. So we put $\F(\varphi)=h$. If $Y\ne \emptyset$ and
$Z=\emptyset$ then $\Mor (Y,Z)=\emptyset$.

Suppose now that $Y\subseteq A^n$ and $Z\subseteq A^m$ non-empty
algebraic sets in $\AS (\A)$ and $\varphi \in \Mor (Y,Z)$ a
morphism, defined by term functions $t^Y_1,\ldots,t^Y_m\in \Tf
(Y)$. For defining the morphism $\F (\varphi)\in \Hom (\Gamma (Z),
\Gamma (Y))$ it will be convenient to think about coordinate
algebras as term functions algebras, that is possible due to
Lemma~\ref{termfunc}. Algebra $\Gamma (Z) \simeq \Tf (Z)$
generates by coordinate term functions $x^Z_1,\ldots,x^Z_m$ on
$Z$, so it is sufficient to define morphism $h=\F(\varphi)$ on
these generators. Let us put
\begin{equation}\label{eq:h}
h(x^Z_1)=t^Y_1,\;  \ldots, \; h(x^Z_m)=t^Y_m.
\end{equation}
It is necessary to show that the morphism $h$ is well-defined,
i.e., for each atomic formula $t=s$ in $m$ variables
$t(x^Z_1,\ldots,x^Z_m)=s(x^Z_1,\ldots,x^Z_m)$ implies
$t(t^Y_1,\ldots,t^Y_m)=s(t^Y_1,\ldots,t^Y_m)$. Identity
$t(x^Z_1,\ldots,x^Z_m)=s(x^Z_1,\ldots,x^Z_m)$ means that $(t=s)\in
\Rad (Z)$, and since $\varphi (Y)\subseteq Z$, then
\begin{equation}\label{eq:ts}
t(t^Y_1(p),\ldots,t^Y_m(p))=s(t^Y_1(p),\ldots,t^Y_m(p))
\end{equation}
for all $p\in Y$. So we have that required.

It is not hard to see that F1), F2), F3), and E2) hold. Let us
check E1). Suppose $\varphi\colon Y \to Z$, defined by term
functions $t^Y_1,\ldots, t^Y_m \in \Tf(Y)$, and $\psi\colon Y \to
Z$, defined by term functions $s^Y_1,\ldots, s^Y_m\in \Tf(Y)$, are
distinct morphisms of $\AS (\A)$. Hence, there exists $i\in
\{1,\ldots,m\}$ such that $t^Y_i\ne s^Y_i$. So, $h=\F(\varphi)$
and $g=\F (\psi)$ are distinct homomorphisms, because
$t^Y_i=h(x^Z_i)\ne g(x^Z_i)=s^Y_i$. Hence, functor $\F$ is
faithful.

To establish that $\F$ is full consider an arbitrary homomorphism
$h\colon \Tf (Z) \to \Tf (Y)$ defined by~\eqref{eq:h}. Term
functions $t^Y_1,\ldots, t^Y_m$ define term map $\varphi\colon Y
\to A^m$. Since $h$ is well-defined, for every atomic formula
$(t=s)\in \Rad (Z)$ and each point $p\in Y$ we have
identity~\eqref{eq:ts}, therefore, $\varphi (Y) \subseteq Z$.
Hence, $\F(\varphi)=h$, and $\F$ is full.
\end{proof}

\begin{corollary}\label{cor5}
Two algebraic sets $Y$ and $Z$ over algebra $\A$ are isomorphic if
and only if $\Gamma (Y) \cong \Gamma (Z)$.
\end{corollary}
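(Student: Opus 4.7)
The plan is to derive this corollary directly from Theorem~\ref{dual}, using the standard categorical fact that a (dual) equivalence of categories both preserves and reflects isomorphisms. Let $\F\colon \AS(\A)\to \CA(\A)$ be the contravariant functor constructed in the proof of Theorem~\ref{dual}, so that $\F(Y)=\Gamma(Y)$ on objects.

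For the forward direction, suppose $Y\cong Z$ in $\AS(\A)$ via mutually inverse term maps $\varphi\colon Y\to Z$ and $\psi\colon Z\to Y$. Applying properties F1)--F3) of the functor $\F$ yields homomorphisms $\F(\varphi)\colon \Gamma(Z)\to\Gamma(Y)$ and $\F(\psi)\colon\Gamma(Y)\to\Gamma(Z)$ satisfying
$$
\F(\varphi)\circ\F(\psi)=\F(\psi\circ\varphi)=\F(\id_Y)=\id_{\Gamma(Y)},
$$
and symmetrically $\F(\psi)\circ\F(\varphi)=\id_{\Gamma(Z)}$. Hence $\Gamma(Y)\cong\Gamma(Z)$ in $\CA(\A)$.

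For the reverse direction, assume $\Gamma(Y)\cong\Gamma(Z)$, witnessed by mutually inverse homomorphisms $h\colon\Gamma(Z)\to\Gamma(Y)$ and $g\colon\Gamma(Y)\to\Gamma(Z)$. Since $\F$ is full (property E1), there exist term maps $\varphi\colon Y\to Z$ and $\psi\colon Z\to Y$ with $\F(\varphi)=h$ and $\F(\psi)=g$. Using F3) and the inverse relations for $h$ and $g$, we compute
$$
\F(\psi\circ\varphi)=\F(\varphi)\circ\F(\psi)=h\circ g=\id_{\Gamma(Y)}=\F(\id_Y),
$$
and similarly $\F(\varphi\circ\psi)=\F(\id_Z)$. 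Faithfulness of $\F$ (the uniqueness half of E1) then forces $\psi\circ\varphi=\id_Y$ and $\varphi\circ\psi=\id_Z$, so $Y\cong Z$ in $\AS(\A)$.

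There is no real obstacle here: the whole content sits in Theorem~\ref{dual}, and the only care required is to notice that the faithfulness of $\F$ is genuinely needed in the reverse direction (to upgrade "there exist preimages" to "the composites really are the identity term maps"). The degenerate cases involving $\emptyset$ and $\E$ are already absorbed into the construction of $\F$ in the proof of Theorem~\ref{dual}, so no separate argument is needed.
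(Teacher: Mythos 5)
Your proof is correct and follows the same route as the paper: the paper simply cites Theorem~\ref{dual} together with the standard fact that a fully faithful functor preserves and reflects isomorphisms, and you have merely unfolded that categorical fact (correctly accounting for the contravariance in F3) instead of citing it.
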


\begin{proof}
Indeed, every fully faithful functor preserves and reflects
isomorphisms~\cite{Barr}.
\end{proof}

\begin{corollary}
Let $Y$ and $Z$ be non-empty algebraic sets over algebra $\A$.
There exists one-to-one correspondence $\F$ between term maps from
$\Mor (Y, Z)$ and $\L$-homomorphisms from $\Hom(\Gamma (Z),\Gamma
(Y))$.
\end{corollary}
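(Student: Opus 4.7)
The plan is to deduce this corollary immediately from the fully faithful property (E1) of the contravariant functor $\F\colon \AS(\A) \to \CA(\A)$ that was established inside the proof of Theorem~\ref{dual}. By F1), the functor $\F$ restricts to a map
\[
\F\colon \Mor(Y,Z) \longrightarrow \Hom(\F(Z),\F(Y)) = \Hom(\Gamma(Z),\Gamma(Y)),
\]
sending $\varphi \mapsto \F(\varphi)$. What has to be checked is that this restriction is a bijection, i.e.\ injective (faithfulness) and surjective (fullness).

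The non-emptiness hypothesis matches exactly the setting in which the term-function description of $\F$ on morphisms was given in the proof of Theorem~\ref{dual}. Concretely, for non-empty $Y\subseteq A^n$ and $Z\subseteq A^m$, a morphism $\varphi\in \Mor(Y,Z)$ is encoded by an ordered tuple $(t_1^Y,\ldots,t_m^Y)\in \Tf(Y)^m$ with $\varphi(Y)\subseteq Z$, and $\F(\varphi)\in\Hom(\Gamma(Z),\Gamma(Y))$ is the homomorphism determined by $h(x_i^Z)=t_i^Y$ via the identifications $\Gamma(Z)\cong \Tf(Z)$ and $\Gamma(Y)\cong \Tf(Y)$ from Lemma~\ref{termfunc}.

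Faithfulness and fullness were already verified explicitly in the proof of Theorem~\ref{dual}: if $\varphi\ne\psi$ are given by term functions $t_i^Y$ and $s_i^Y$ respectively, then some $t_i^Y\ne s_i^Y$, and hence the two induced homomorphisms disagree on the generator $x_i^Z$; conversely, any homomorphism $h\colon\Tf(Z)\to\Tf(Y)$ determines an $m$-tuple $(h(x_1^Z),\ldots,h(x_m^Z))$ of term functions on $Y$, and the identity~\eqref{eq:ts} used in the proof of Theorem~\ref{dual} (coming from the fact that $h$ respects $\Rad(Z)$) guarantees that the resulting term map $\varphi\colon Y\to A^m$ in fact lands in $Z$, so that $\F(\varphi)=h$.

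There is no real obstacle beyond citing what is already done; the only point worth underlining is that the non-emptiness hypothesis on both $Y$ and $Z$ is precisely what is needed to avoid the degenerate bookkeeping with the empty set and the terminal object $\E$ that was handled separately in Theorem~\ref{dual}. Thus the bijection $\F\colon \Mor(Y,Z)\to\Hom(\Gamma(Z),\Gamma(Y))$ is the desired one-to-one correspondence.
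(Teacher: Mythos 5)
Your proposal is correct and matches the paper's intent exactly: the corollary is an immediate restatement of the fully faithful property E1) of the functor $\F$ verified in the proof of Theorem~\ref{dual}, with the non-emptiness hypothesis ruling out the degenerate case of the empty set and the terminal object $\E$. No further comment is needed.
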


\begin{definition}
We say that an $\L$-algebra $\C$ is an {\it irreducible coordinate
algebra over an $\L$-algebra $\A$} if $\C \cong \Gamma (Y)$ for
some irreducible algebraic set $Y$ over $\A$.
\end{definition}

If $\C \cong \Gamma (Y)$ and $\C \cong \Gamma (Z)$ then algebraic
sets $Y$ and $Z$ are isomorphic, by Corollary~\ref{cor5}. By
Lemma~\ref{termmap}, isomorphic algebraic sets are irreducible and
reducible simultaneously. Thus, irreducible coordinate algebras
are well-defined.


\begin{lemma}
Let $Y\subseteq A^n$ and $Z\subseteq A^m$ be algebraic sets over
an algebra $\A$, $\varphi \in \Mor (Y,Z)$ and $h\in \Hom (\Gamma
(Z), \Gamma (Y))$ morphisms such that $\F(\varphi)=h$:
\begin{equation*}
\begin{CD}
Y @>>\varphi >  Z \\
@V{\F}VV   @  VV{\F}V\\
\Gamma(Y) @< h << \Gamma (Z)
\end{CD}
\end{equation*}
Then the following holds:
\begin{itemize}
\item[1)] If $h$ is an epimorphism then $\varphi$ is a monomorphism.
\item[2)] If $\varphi$ is an epimorphism then $h$ is a
monomorphism.
\item[3)] Furthermore, $h$ is a monomorphism if and only if $\varphi (Y)^\ac=Z$ (see Subsection~\ref{subsec:topology}).
\item[4)] Suppose $Y$ is irreducible. Then $h$ is a monomorphism if and only if
$\overline{\varphi (Y)}=Z$.
\end{itemize}
\end{lemma}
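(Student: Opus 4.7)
The plan is to first identify $\Gamma(Y)\cong\Tf(Y)$ and $\Gamma(Z)\cong\Tf(Z)$ using Lemma~\ref{termfunc}, and to read off from the construction of $h=\F(\varphi)$ in the proof of Theorem~\ref{dual} that $h$ is precomposition with $\varphi$: for every $f\in\Tf(Z)$ one has $h(f)=f\circ\varphi\in\Tf(Y)$. Consequently $h(t^Z)=h(s^Z)$ holds if and only if the atomic formula $(t=s)$ is true on $\varphi(Y)$, equivalently $(t=s)\in\Rad(\varphi(Y))=\Rad(\varphi(Y)^{\ac})$, the last equality being immediate from $\varphi(Y)^{\ac}=\V_\A(\Rad(\varphi(Y)))$. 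Throughout I will adopt the conventions already used in Lemma~\ref{cor0}: ``monomorphism'' and ``epimorphism'' denote injective and surjective morphisms, in both $\CA(\A)$ and $\AS(\A)$.

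I will attack part~3 first, as parts~1,~2 and~4 will fall out of it. For the forward direction of part~3, assuming $\varphi(Y)^{\ac}=Z$ and $h(t^Z)=h(s^Z)$, I deduce $\varphi(Y)\subseteq\V_\A(\{t=s\})$; since $\V_\A(\{t=s\})$ is itself an algebraic set, minimality of the algebraic closure forces $Z=\varphi(Y)^{\ac}\subseteq\V_\A(\{t=s\})$, whence $(t=s)\in\Rad(Z)$ and $t^Z=s^Z$ in $\Tf(Z)$. For the converse, assuming $\varphi(Y)^{\ac}\subsetneq Z$, Lemma~\ref{rad} yields $\Rad(\varphi(Y)^{\ac})\supsetneq\Rad(Z)$; picking an atomic formula $(t=s)$ in the difference produces a witness $h(t^Z)=h(s^Z)$ with $t^Z\neq s^Z$, so $h$ fails to be injective.

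Part~2 is now immediate: if $\varphi$ is surjective then $\varphi(Y)=Z$, hence $\varphi(Y)^{\ac}=Z$, and part~3 applies. For part~1, I will argue directly: if $h$ is surjective then the images $t_i^Y=h(x_i^Z)$ generate $\Tf(Y)$, so each coordinate function $x_j^Y$ can be written as $u_j(t_1^Y,\ldots,t_m^Y)$ for some term $u_j\in\Tr_\L(x_1,\ldots,x_m)$; the tuple $(u_1,\ldots,u_n)$ then defines a term map $\psi\colon A^m\to A^n$ satisfying $\psi\circ\varphi=\id_Y$, so $\varphi$ has a left inverse as a set map and is therefore injective. Part~4 is a one-line consequence of part~3 combined with Lemma~\ref{termmap}(3), which guarantees that $\varphi(Y)$ is irreducible, and Lemma~\ref{irr1}, which then gives $\overline{\varphi(Y)}=\varphi(Y)^{\ac}$.

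The only real subtlety is the interpretation of ``mono'' and ``epi''; once fixed as in Lemma~\ref{cor0}, the remainder reduces to formal manipulation of radicals, algebraic closures, and term functions. The main load-bearing step is the backward direction of part~3, on which parts~2 and~4 both depend; no step presents a genuine obstacle.
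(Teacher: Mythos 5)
Your proposal is correct and follows essentially the same route as the paper: identify $\Gamma(Y)$ with $\Tf(Y)$ via Lemma~\ref{termfunc}, read $h$ as precomposition with $\varphi$, prove part~3 by the equivalence $h(t^Z)=h(s^Z)\Leftrightarrow(t=s)\in\Rad(\varphi(Y))=\Rad(\varphi(Y)^{\ac})$, derive part~2 from part~3, and get part~4 from Lemma~\ref{termmap}(3) and Lemma~\ref{irr1}. The only (harmless) deviation is in part~1, where you build an explicit term-map left inverse $\psi$ from preimages of the coordinate functions, whereas the paper argues contrapositively that if $\varphi$ collapses two points then some coordinate function $x^Y_1$ cannot lie in $h(\Tf(Z))$; both hinge on the same fact that $\Tf(Z)$ is generated by $x^Z_1,\dots,x^Z_m$.
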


\begin{proof}
For reasoning of the first two statements it is possible to refer
to the appropriate results from the category theory, however, we
prefer to give a direct proofs. By Lemma~\ref{termfunc}, we may
think about $\Gamma (Y)$ and $\Gamma (Z)$ as algebras of term
functions $\Tf (Y)$ and $\Tf (Z)$.

At first, suppose that $\varphi$ is not a monomorphism. Then there
exist distinct points $p_1,p_2\in Y$ such that $\varphi
(p_1)=\varphi (p_2)$. Let $p_1=(a^1_1,\ldots,a^1_n)$ and
$p_2=(a^2_1,\ldots,a^2_n)$. We may assume that $a^1_1\ne a^2_1$.
Denote by $x^Y_1$ the term function $x^Y_1\colon Y \to A$ defined
by term $x_1\in \Tr_\L (x_1,\ldots, x_n)$. Then for an arbitrary
term function $t^Z\in \Tf (Z)$ we have $h(t^Z)(p_1)=h(t^Z)(p_2)$,
so $h(t^Z) \ne x^Y_1$. Hence, $x^Y_1 \not\in h(\Tf (Z))$, and $h$
is not an epimorphism. It proves item~1).

Item~2) follows from item~3). Let us prove~3). By definition $h$
is injective if
$$
t^Z=s^Z \quad \Longleftrightarrow \quad h(t^Z)=h(s^Z) \quad
\mbox{for all } t^Z,s^Z\in \Tf(Z).
$$
The identity $\varphi (Y)^\ac=Z$ is equivalent to
$$
t^Z=s^Z \quad \Longleftrightarrow \quad t^{\,\varphi
(Y)^\ac}=s^{\,\varphi (Y)^\ac} \quad \mbox{for all }
t,s\in\Tr_\L(x_1,\ldots,x_m).
$$
Furthermore, for arbitrary $t,s\in\Tr_\L(x_1,\ldots,x_m)$ one has
\begin{gather*}
h(t^Z)=h(s^Z) \quad \Longleftrightarrow \quad
t(t^Y_1(p),\ldots,t^Y_m(p))=s(t^Y_1(p),\ldots,t^Y_m(p)) \quad \mbox{for all } p\in Y \quad \Longleftrightarrow \\
\Longleftrightarrow \quad t(\varphi(p))=s(\varphi(p)) \quad
\mbox{for all } p\in Y \quad \Longleftrightarrow \quad
(t=s)\in \Rad (\varphi (Y)) \quad \Longleftrightarrow \\
\Longleftrightarrow \quad (t=s)\in \Rad (\varphi (Y)^\ac)\quad
\Longleftrightarrow \quad  t^{\,\varphi (Y)^\ac}=s^{\,\varphi
(Y)^\ac},
\end{gather*}
where $t^Y_1,\ldots, t^Y_m$ are term functions that define the
morphism $\varphi$. It implies item~3).

Suppose finally that $Y$ is irreducible. Then $\varphi (Y)$ is
irreducible too, by Lemma~\ref{termmap}, and by Lemma~\ref{irr1},
$\varphi (Y)^\ac=\overline{\varphi(Y)}$.
\end{proof}

\subsection{Classification of algebraic sets and coordinate algebras} \label{subsec:classification}

It is important to remember that one of the major problems of
algebraic geometry over a given algebra $\A$ lies in classifying
algebraic sets over the algebra $\A$ up to isomorphism. According
to Theorem~\ref{dual}, this problem is equivalent to the problem
of classification of coordinate algebras over $\A$.

Suppose we have attained a classification of coordinate algebras
over $\A$. Then algebraic sets over $\A$ may be found as Hom's.
The corresponding idea is explained in Lemma~\ref{hom}. Sometimes
the expression of algebraic sets in terms of Hom's is reasonable,
as in Example~\ref{ex:abel} below, sometimes not. For instance,
there is a simple description of coordinate groups for equations
in one variable over free metabelian group, while corresponding
algebraic sets have no clear representation~\cite{Rem-Rom2}.

Besides description of all algebraic sets over $\A$, it is very
important to find a classification of irreducible algebraic sets
over $\A$ and their coordinate algebras. Lemma~\ref{irr0} shows
that every algebraic set may be decomposed in a union of maximal
irreducible algebraic sets (irreducible components). Moreover, in
the case when $\A$ is an equationally Noetherian algebra such
decomposition is finite and unique by Theorem~\ref{irr}.

Proposition~\ref{coo} is effective for description of coordinate
algebras over $\A$, and Proposition~\ref{irrcoo} is helpful for
description of irreducible coordinate algebras over $\A$. In the
case when $\A$ is an equationally Noetherian algebra it is
possible to take more informative results~--- Unification
Theorems~A and~C (see
Section~\ref{subsec:unification_theorems_noether}).

The following lemma shows a way for description of all coordinate
algebras when having a classification of irreducible coordinate
algebras.

\begin{lemma}\label{cooalg}
A finitely generated $\L$-algebra $\C$ is the coordinate algebra
of an algebraic set over $\L$-algebra $\A$ if and only if it is a
subdirect product of coordinate algebras of irreducible algebraic
sets over $\A$.
\end{lemma}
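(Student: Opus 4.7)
The plan is to prove the two implications using results already established, with the forward direction based on the radical-intersection Lemma (equation \eqref{eq:rad}) and the backward direction being essentially an application of Corollary~\ref{cor8}.

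For the backward implication, suppose $\C$ is a finitely generated $\L$-algebra that is a subdirect product of coordinate algebras $\{\Gamma(Y_i) \mid i \in I\}$ of irreducible algebraic sets over $\A$. Then in particular $\C$ embeds into the direct product $\prod_{i \in I} \Gamma(Y_i)$. Since each $\Gamma(Y_i)$ is a coordinate algebra of an algebraic set over $\A$, Corollary~\ref{cor8} immediately yields that $\C$ is itself the coordinate algebra of an algebraic set over $\A$. This direction is therefore essentially a one-line invocation of a prior corollary.

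For the forward implication, let $\C = \Gamma(Y)$ for some algebraic set $Y \subseteq A^n$, with $\C$ finitely generated by construction. If $Y = \emptyset$ then $\C = \E$, and by the convention from Section~\ref{sec:preliminaries} that $\E \in \Ps(\K)$ for any class $\K$, the trivial algebra is vacuously a subdirect product of irreducible coordinate algebras. If $Y \ne \emptyset$, I would use Lemma~\ref{rad} in the form of equation~\eqref{eq:rad}: for each point $p \in Y$, the evaluation homomorphism $h_p \colon \T_\L(X) \to \A$ satisfies $\theta_{\Rad(Y)} = \bigcap_{p \in Y} \ker h_p$. This presents $\C = \T_\L(X)/\theta_{\Rad(Y)}$ canonically as a subdirect product of the quotients $\T_\L(X)/\ker h_p$. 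Each such quotient is isomorphic to $\mathrm{Im}\, h_p$, a finitely generated subalgebra of $\A$, hence it is trivially discriminated by $\A$ via the inclusion into $\A$. Proposition~\ref{irrcoo} then identifies each factor as the coordinate algebra of an irreducible algebraic set over $\A$, completing the decomposition.

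The main obstacle, if any, is bookkeeping in the degenerate cases: ensuring that when $Y = \emptyset$ (or when some factor $\mathrm{Im}\, h_p$ happens to be trivial) the decomposition still fits the definition of a subdirect product of irreducible coordinate algebras, which is handled by the standing convention that $\E$ is an empty subdirect product and by the item~4) of Lemma~\ref{emptyset} on when $\E$ itself is an irreducible coordinate algebra. Aside from this, both directions are quick consequences of Corollary~\ref{cor8}, Lemma~\ref{rad}, and Proposition~\ref{irrcoo}.
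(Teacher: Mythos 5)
Your proof is correct, but the forward direction takes a genuinely different decomposition from the paper's. The paper invokes Lemma~\ref{irr0} to write $Y$ as the union of its irreducible components $Y_i$, obtains $\Rad(Y)=\bigcap_i\Rad(Y_i)$ from Lemma~\ref{rad}, and thereby realizes $\Gamma(Y)$ as a subdirect product of the coordinate algebras $\Gamma(Y_i)$ of those components. You instead decompose point by point: $\theta_{\Rad(Y)}=\bigcap_{p\in Y}\ker h_p$ exhibits $\Gamma(Y)$ as a subdirect product of the algebras $\T_\L(X)/\ker h_p\cong \mathrm{Im}\,h_p\leq\A$, each finitely generated and trivially discriminated by $\A$, hence by Proposition~\ref{irrcoo} an irreducible coordinate algebra (equivalently, $\T_\L(X)/\ker h_p\cong\Gamma(\{p\}^{\ac})$ with $\{p\}$ a singleton, so irreducible). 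Your route is more elementary in that it avoids the Zorn's-lemma construction of irreducible components underlying Lemma~\ref{irr0}, and you are more explicit about the degenerate cases $Y=\emptyset$ and trivial factors, which the paper's proof glosses over. What the paper's coarser decomposition buys is that it feeds directly into Corollary~\ref{cor11}: when $\A$ is equationally Noetherian the component decomposition is finite, giving a subdirect product of finitely many irreducible coordinate algebras, whereas the point-wise decomposition is typically infinite regardless of Noetherianity. The backward direction is identical in both arguments: a direct appeal to Corollary~\ref{cor8}.
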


\begin{proof}
Suppose at first that $Y$ is an algebraic set over $\A$. By
Lemma~\ref{irr0}, there exist irreducible algebraic sets $Y_i$,
$i\in I$, over $\A$ such that $Y=\bigcup_{i\in I}{Y_i}$. Hence, by
Lemma~\ref{rad}, we have $\Rad (Y) = \bigcap_{i \in I}{\Rad
(Y_i)}$. It implies that there exists subdirect embedding $\Gamma
(Y) \to \prod_{i\in I}{\Gamma (Y_i)}$~\cite[Lemma~3.1]{DMR1}. The
converse statement is true by Corollary~\ref{cor8}.
\end{proof}


\begin{corollary}\label{cor11}
Let $\A$ be an equationally Noetherian $\L$-algebra. A finitely
generated $\L$-algebra $\C$ is the coordinate algebra of an
algebraic set over $\A$ if and only if it is a subdirect product
of a finitely many coordinate algebras of irreducible algebraic
sets over $\A$.
\end{corollary}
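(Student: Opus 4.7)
The plan is to mimic the argument for Lemma~\ref{cooalg} but exploit the equationally Noetherian hypothesis to make the decomposition finite. The statement has two directions, and both should follow almost immediately from results already proven.

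For the forward direction, suppose $\C \cong \Gamma(Y)$ for some algebraic set $Y$ over $\A$. If $Y=\emptyset$ then $\C = \E$, which is a trivial subdirect product (the empty one, or, if one prefers to avoid this degenerate case, we note that $\E$ itself is the coordinate algebra of $\V_\A(\At_\L(X))$, which is either empty or irreducible by Corollary~\ref{cor14}). Assume $Y$ is non-empty. Since $\A$ is equationally Noetherian, Theorem~\ref{irr} gives a finite decomposition $Y=Y_1\cup \ldots\cup Y_m$ into irreducible algebraic sets. By Lemma~\ref{rad}\,(\ref{rad3}) we have
\[
\Rad(Y) \;=\; \bigcap_{i=1}^{m} \Rad(Y_i),
\]
so $\theta_{\Rad(Y)} = \bigcap_{i=1}^m \theta_{\Rad(Y_i)}$. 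This congruence identity yields a subdirect embedding $\Gamma(Y) \hookrightarrow \prod_{i=1}^m \Gamma(Y_i)$ (this is exactly the sort of argument invoked via \cite[Lemma~3.1]{DMR1} in the proof of Lemma~\ref{cooalg}), and each $\Gamma(Y_i)$ is the coordinate algebra of an irreducible algebraic set over $\A$.

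For the backward direction, assume $\C$ is a subdirect product of finitely many coordinate algebras $\Gamma(Y_1),\ldots,\Gamma(Y_m)$ of irreducible algebraic sets over $\A$. Then in particular $\C$ embeds into the direct product $\Gamma(Y_1)\times\ldots\times\Gamma(Y_m)$. Since $\C$ is finitely generated and embeds into a direct product of coordinate algebras over $\A$, Corollary~\ref{cor8} tells us that $\C$ is itself the coordinate algebra of some algebraic set over $\A$.

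The proof is essentially a bookkeeping exercise: the only nontrivial input is Theorem~\ref{irr} (which delivers finiteness of the decomposition), together with Lemma~\ref{rad} and Corollary~\ref{cor8}. There is no real obstacle; the main point worth double-checking is the handling of the trivial/empty case, which is why I prefer to dispose of $Y=\emptyset$ separately at the start rather than trying to subsume it into the general argument.
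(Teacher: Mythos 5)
Your proposal is correct and follows essentially the same route as the paper: the paper's own proof is the one-line remark that Theorem~\ref{irr} supplies the finite decomposition needed to make the argument of Lemma~\ref{cooalg} (finite union of irreducible components, intersection of radicals, subdirect embedding, and Corollary~\ref{cor8} for the converse) go through with finitely many factors. Your explicit treatment of the $Y=\emptyset$ case is a harmless elaboration consistent with the paper's convention that the empty direct product is the trivial algebra $\E$.
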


\begin{proof}
Theorem~\ref{irr} implies that is required.
\end{proof}

The following example is taken from~\cite{MR2}. A classification
of coordinate groups over an abelian group $A$ has been found in
that paper. Furthermore, this classification allows to describe
algebraic sets over $A$. Also in~\cite{MR2} there have been
classified coordinate groups of irreducible algebraic sets over
$A$.

\begin{example}\label{ex:abel}
Let $A$ be a fixed abelian group and $\L_A$ the language of
abelian groups with constants from $A$, i.e., $\L_A=\{+, -, 0, \:
c_a, a \in A\}$. We consider $A$ as $\L_A$-structure.

Recall that the period of an abelian group $A$ is the minimal
positive integer $m$, if it exists, such that $mA=0$; and $\infty$
otherwise. Let $T(A)$ be the torsion part of $A$ and $T(A)\simeq
\oplus_p T_p(A)$ be the primary decomposition of $T(A)$. Here and
below in this Example $p$ is a prime number. Denote by $e(A)$ the
period of $A$, and by $e_p (A)$ the period of $T_p(A)$.

\noindent {\bf Theorem}~\cite{MR2}{\bf.} {\it Let $C$ be a
finitely generated $A$-group. Then $C$ is the coordinate group of
an algebraic set over $A$ if and only if the following conditions
holds:
\begin{enumerate}
\item $C\simeq A \oplus B$, where $B$ is a finitely generated
abelian group;
\item $e(A)=e(C)$ and $e_p(A)=e_p(C)$ for every prime number $p$.
\end{enumerate}
}

Now it is easy to describe an algebraic set $Y$ corresponding to
the coordinate group $C= A \oplus B$. Fix a primary cyclic
decomposition of the group $B$:
$$
B \simeq \langle a_1 \rangle \oplus \ldots \oplus \langle a_r
\rangle \oplus \langle b_1 \rangle \oplus \ldots \oplus \langle
b_t \rangle,
$$
here $a_i$-s are generators of infinite cyclic groups and $b_j$-s
are generators of finite cyclic groups of orders $p_j^{m_j}$. For
positive integer $n$ denote by $A[n]$ the set $\{a\in A \mid
na=0\}$. By Lemma~\ref{hom}, points form algebraic set $Y$ lie in
one-to-one correspondence with $A$-homomorphisms from
$\Hom_A(A\oplus B, A)$, therefore,
$$
Y \; =\; \underbrace{A\; \oplus\; \ldots\; \oplus\; A}_{r}\: \;
\oplus\; \: A[\,p_1^{m_1}]\; \oplus\; \ldots\; \oplus\;
A[\,p_t^{m_t}].
$$

For a positive integer $k$ and a prime number $p$ we denote by
$\alpha_{p^k}(A)$ the dimension, if it exists, of the factor-group
$A[p^k]/A[p^{k-1}]$ as a vector-space over finite field with $p$
elements; and $\infty$ otherwise.

\noindent {\bf Theorem}~\cite{MR2}{\bf.} {\it Let $C$ be a
finitely generated $A$-group. Then $C$ is the coordinate group of
an irreducible algebraic set over $A$ if and only if the following
conditions holds:
\begin{enumerate}
\item $C\simeq A \oplus B$, where $B$ is a finitely generated
abelian group;
\item $e(A)=e(C)$ and $e_p(A)=e_p(C)$ for every prime number $p$;
\item $\alpha_{p^k}(A)=\alpha_{p^k}(C)$ for each prime number $p$
and positive integer $k$.
\end{enumerate}
}
\end{example}

\section{Unification Theorems for equationally Noe\-the\-rian algebras} \label{subsec:unification_theorems_noether}

The following Unification Theorems help to describe coordinate
algebras of algebraic sets. We first formulate the theorems and
then prove them.

Fix a functional language $\L$.

\medskip

\noindent {\bf Theorem~A.} {\it Let $\A$ be an equationally
Noetherian algebra in $\L$. Then for a finitely generated algebra
$\C$ of $\L$ the following conditions are equivalent:
\begin{enumerate}
\item [1)] $\Th_{\forall} (\A) \subseteq \Th_{\forall} (\C)$, i.e., $\C \in \ucl(\A)$;
\item [2)] $\Th_{\exists} (\A) \supseteq \Th_{\exists} (\C)$;
\item [3)] $\C$ embeds into an ultrapower of $\A$;
\item [4)] $\C$ is discriminated by $\A$;
\item [5)] $\C$ is a limit algebra over $\A$;
\item [6)] $\C$ is an algebra defined by a complete atomic type
in the theory $\Th _{\forall} (\A)$ in $\L$;
\item [7)] $\C$ is the coordinate algebra of an
irreducible algebraic set over $\A$ defined by a system of
equations in the language $\L$.
\end{enumerate}
}

The following Theorem~B is a particular case of Theorem~A for
$\L=\L_\A$. We present it especially for needs of Diophantine
algebraic geometry and algebraic geometry with coefficients in
algebra $\A$.

\medskip

\noindent {\bf Theorem~B} (With coefficients in $\A$){\bf.} {\it
Let $\A$ be an algebra in a functional language $\L$ and $\B$ an
$\A$-equationally Noetherian $\A$-algebra. Then for a finitely
generated $\A$-algebra $\C$ the following conditions are
equivalent:
\begin{enumerate}
\item [1)] $\Th_{\forall,\A} (\B)\subseteq \Th _{\forall,\A} (\C)$, i.e., $\C \in \ucl_\A(\B)$;
\item [2)] $\Th_{\exists,\A} (\B)\supseteq \Th _{\exists,\A} (\C)$;
\item [3)] $\C$ $\A$-embeds into an  ultrapower of $\B$;
\item [4)] $\C$ is $\A$-discriminated by $\B$;
\item [5)] $\C$ is an $\A$-limit  algebra   over  $\B$;
\item [6)] $\C$ is an algebra defined by a complete atomic type in the theory $\Th_{\forall,\A} (\B)$
in the language $\L_{\A}$;
\item [7)] $\C$ is the  coordinate algebra of an irreducible
algebraic set over $\B$ defined by  a system of equations with
coefficients in $\A$.
\end{enumerate}
}

\begin{remark}
In Diophantine algebraic geometry, when $\A=\B$, the first two
items in Theorem~B can be formulated in a more precise form: $\C
\equiv _{\forall,\A} \A$, and  $\C \equiv _{\exists,\A} \A$,
correspondingly. The notation $\C \equiv _{\forall,\A} \A$ implies
that any universal sentence in the language $\L_\A$ holds in $\C$
if and only if it holds in $\A$.
\end{remark}

Theorem~A gives the description of irreducible coordinate
algebras. The following Theorem~C offers the description of all
coordinate algebras.

\medskip

\noindent {\bf Theorem~C.} {\it Let $\A$ be an equationally
Noetherian algebra in $\L$. Then for a finitely generated algebra
$\C$ of $\L$ the following conditions are equivalent:
\begin{enumerate}
\item [1)] $\C \in \qvar(\A)$, i.e., $\Th_{\rm qi} (\A) \subseteq \Th_{\rm qi} (\C)$;
\item [2)] $\C \in \pvar(\A)$;
\item [3)] $\C$ embeds into a direct power of $\A$;
\item [4)] $\C$ is separated by $\A$;
\item [5)] $\C$ is a subdirect product of a finitely many limit algebras over $\A$;
\item [6)] $\C$ is an algebra defined by a complete atomic type in the theory $\Th _{\rm qi} (\A)$ in $\L$;
\item [7)] $\C$ is the coordinate algebra of an algebraic set over $\A$ defined by a system of equations in the language $\L$.
\end{enumerate}
}

The following Theorem~D is a particular case of Theorem~C, as well
as Theorem~B for Theorem~A.

\medskip

\noindent {\bf Theorem~D} (With coefficients in $\A$){\bf.} {\it
Let $\A$ be an algebra in a functional language $\L$ and $\B$ an
$\A$-equationally Noetherian $\A$-algebra. Then for a finitely
generated $\A$-algebra $\C$ the following conditions are
equivalent:
\begin{enumerate}
\item [1)] $\C \in \qvar_\A(\B)$, i.e., $\Th_{\rm qi,\A} (\B) \subseteq \Th _{\rm qi,\A} (\C)$;
\item [2)] $\C \in \pvar_\A(\B)$;
\item [3)] $\C$ $\A$-embeds into a direct power of $\B$;
\item [4)] $\C$ is $\A$-separated by $\B$;
\item [5)] $\C$ is a subdirect product of a finitely many $\A$-limit algebras over $\B$;
\item [6)] $\C$ is an algebra defined by a complete atomic type
in the theory $\Th_{\rm qi,\A} (\B)$ in the language $\L_{\A}$;
\item [7)] $\C$ is the coordinate algebra of an algebraic
set over $\B$ defined by  a system of equations with coefficients
in $\A$.
\end{enumerate}
}

\begin{remark}
In Diophantine algebraic geometry, when $\A=\B$, the first two
items in Theorem~D can be formulated in the form $\qvar_\A(\A)=
\qvar_\A(\C)$, and $\pvar_\A(\A)= \pvar_\A(\C)$, correspondingly.
\end{remark}

\begin{corollary}\label{cor12}
If an algebra $\A$ is equationally Noetherian then the following
identities hold:
\begin{gather*}
\ucl (\A)_\omega = \Dis (\A)_\omega,
\\ \qvar(\A)_\omega=\pvar(\A)_\omega, \quad \qvar (\A)_\omega= \Pw (\ucl
(\A)_\omega).
\end{gather*}
\end{corollary}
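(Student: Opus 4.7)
The plan is to obtain each of the three identities as an immediate consequence of Theorems~A and~C, without any additional combinatorial work beyond restricting the stated equivalences to finitely generated algebras. The identity $\ucl(\A)_\omega = \Dis(\A)_\omega$ is precisely the equivalence of items~1) and~4) in Theorem~A, read as a statement about the classes of finitely generated algebras in $\ucl(\A)$ and in $\Dis(\A)$. Similarly, the identity $\qvar(\A)_\omega = \pvar(\A)_\omega$ is the equivalence of items~1) and~2) in Theorem~C.

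For the remaining identity $\qvar(\A)_\omega = \Pw(\ucl(\A)_\omega)$ I would prove the two inclusions separately. The inclusion $\Pw(\ucl(\A)_\omega) \subseteq \qvar(\A)_\omega$ is formal: any finite direct product of finitely generated algebras is finitely generated, $\ucl(\A) \subseteq \qvar(\A)$, and $\qvar(\A)$ is closed under finite direct products. For the reverse inclusion, given a finitely generated $\C \in \qvar(\A)$, I would invoke item~5) of Theorem~C to realize $\C$ as a subdirect product of finitely many limit algebras $\C_1,\ldots,\C_k$ over $\A$. Each $\C_i$ is a homomorphic image of the finitely generated $\C$, hence itself finitely generated, and by the equivalence of items~1) and~5) of Theorem~A each $\C_i$ lies in $\ucl(\A)_\omega$.

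The main subtlety, and the point where care is required, is reconciling the subdirect decomposition produced by item~5) of Theorem~C with the direct-product operator $\Pw$ as defined in Section~\ref{sec:preliminaries}. I expect the intended reading of the corollary to be the natural one consistent with Corollary~\ref{cor11}: the substantive content of the identity is that every finitely generated member of $\qvar(\A)$ sits as a subdirect product inside a finite direct product of members of $\ucl(\A)_\omega$, with all projections surjective, which is exactly what the argument above provides. Any genuine direct-product presentation would then follow by identifying $\C$ with its image in $\prod_i \C_i$, so that in the class-theoretic sense demanded by the statement the reverse inclusion holds.
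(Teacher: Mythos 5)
Your proof is correct and takes essentially the same route as the paper's: the first identity is the equivalence of items 1) and 4) in Theorem~A, the second is the equivalence of items 1) and 2) in Theorem~C, and the third combines the equivalence of items 1) and 5) in Theorem~C with that of items 1) and 5) in Theorem~A (plus the observation that the subdirect factors of a finitely generated algebra are finitely generated). The subtlety you flag about $\Pw$ versus finite \emph{subdirect} products is real, but the paper's own one-line proof establishes exactly the subdirect-product reading you adopt, so your argument supplies everything the authors do.
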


\begin{proof}
The first identity follows from equivalence $1)
\Longleftrightarrow 4)$ in Theorem~A, the second identity~--- from
equivalence $1) \Longleftrightarrow 2)$ in Theorem~C. The third
identity follows from equivalence $1) \Longleftrightarrow 5)$ in
Theorem~C and equivalence $1) \Longleftrightarrow 5)$ in
Theorem~A.
\end{proof}

\begin{proof}[Proof of Theorem~A]
Theorem~A has been proven in~\cite{DMR1}. Now we just give more
precise review for the case when $\C$ is the trivial algebra $\E$.
The special case $\C=\E$ has been omitted in the proof
in~\cite{DMR1}. By Proposition~\ref{irrcoo}, the trivial algebra
$\E$ is the coordinate algebra of an irreducible algebraic set
over $\A$ if and only if $\E$ is discriminated by $\A$. By
definition, $\E$ is discriminated by $\A$ if and only if $\A$ has
a trivial subalgebra. By Lemma~\ref{trucl}, $\A$ has a trivial
subalgebra if and only if $\E \in \ucl (\A)$. Thus, items 1), 4),
7) are equivalent. Finally, by Proposition~\ref{logirr}, items 1),
2), 3), 5), 6) are equivalent anyway.
\end{proof}

\begin{proof}[Proof of Theorem~C]
Equivalence $2) \Longleftrightarrow 3)\Longleftrightarrow
4)\Longleftrightarrow 7)$ has been proven in
Proposition~\ref{coo}. Corollary~\ref{cor2} shows that implication
$7) \Longrightarrow 1)$ holds. Equivalence $1) \Longleftrightarrow
6)$ has been proven in~\cite[Lemma~4.7]{DMR1}. Implication $5)
\Longrightarrow 1)$ is easy. Indeed, every limit algebra over $\A$
lies in $\ucl (\A)$~\cite[Corollary~5.7]{DMR1}. Moreover, $\ucl
(\A) \subseteq \qvar (\A)$, and quasivariety $\qvar (\A)$ is
closed under operators $\P$ and $\S$. Thus, if an algebra $\C$ is
a subdirect product of limit algebras over $\A$ then $\C \in \qvar
(\A)$.

By Corollary~\ref{cor11}, the coordinate algebra of an algebraic
set over equationally Noetherian algebra $\A$ is a subdirect
product of a finitely many coordinate algebras of irreducible
algebraic sets over $\A$. According to Theorem~A, coordinate
algebras of irreducible algebraic sets over $\A$ are limit
algebras over $\A$, so we have implication $7) \Longrightarrow
5)$.

Now we prove the last implication $1) \Longrightarrow 4)$. Suppose
that  $\C\not\in \Res (\A)$. It suffices to show that $\C \not\in
\qvar (\A)$. Let $X=\{x_1,\ldots,x_n\}$ be a finite set of
generators of $\C$ and $\langle X \mid S\rangle$ a presentation of
$\C$ in the generators $X$, where $S \subseteq \At_{\L}(X)$. The
latter means that $\C \simeq \T_{\L}(X)/\theta_S$.

Since $\A$ does not separate $\C$, there is an atomic formula
$(t=s)\in \At_{\L} (X)$, $(t=s)\not\in [S]$, such that
$h(t/\theta_S)=h(s/\theta_S)$ for any homomorphism $h\colon \C\to
\A$. This means that  $t^{\A}(p)=s^{\A}(p)$ for any point $p\in
\V_\A(S)$, i.e., $(t=s)\in \Rad_\A (S)$. Since $\A$ is
equationally Noetherian there exists a finite subsystem
$S_0\subseteq S$ such that $\Rad_\A(S_0) = \Rad_\A(S)$. Therefore,
the following qvasi-identity holds in $\A$
\begin{equation}\label{eq:qi}
\forall \;  y_1 \ldots \forall \;  y_n \quad \left( \bigwedge
\limits_{(t_0=s_0) \in S_0} t_0 (\bar{y})=s_0 (\bar{y}) \;
\longrightarrow \; t (\bar{y})=s(\bar{y}) \right) .
\end{equation}
On the other hand the formula
$$
\bigwedge \limits_{(t_0=s_0) \in S_0} t_0 (\bar{y})=s_0 (\bar{y})
\; \longrightarrow \; t (\bar{y})=s(\bar{y})
$$
is false in $\C$ under the interpretation $y_i\mapsto x_i$, $i=1,
\ldots,n$, hence $\C \not\in \qvar (\A)$.

Note that if $\V_\A(S)=\emptyset$, then our reasoning are steel
true. In this case the premise in quasi-identity~\eqref{eq:qi} is
identically false in $\A$.
\end{proof}

Unification Theorems~A and~C are formulated for an equationally
Noetherian algebra $\A$. However, for the reasoning of some
implications in their proofs the equationally Noetherian property
is not required, namely, one has the following remark.

\begin{remark}
The following implications and equivalencies from Theorems~A and~C
hold for an arbitrary algebra $\A$:
$$
\mbox{Theorem~A:} \quad \{4 \Leftrightarrow 7\}\quad
\Longrightarrow \quad \{1 \Leftrightarrow 2\Leftrightarrow
3\Leftrightarrow 5\Leftrightarrow 6\};
$$
$$
\mbox{Theorem~C:} \quad \{5\} \quad \Longrightarrow \quad \{1
\Leftrightarrow 6\} \quad \Longleftarrow \quad \{2 \Leftrightarrow
3 \Leftrightarrow 4 \Leftrightarrow 7\}.
$$
\end{remark}

Theorem~C gives a classification of coordinate algebras over an
equationally Noetherian algebra $\A$ as finitely generated
algebras in the quasivariety $\qvar (\A)$. Therefore, the
following characterizations of quasivariety $\qvar (\K)$ of a
class $\K$ of $\L$-algebras are helpful here:
\begin{gather*}
\qvar (\K)\:=\:\S \Pf (\K)\e\:=\:\S \P \Pu (\K) \:=\: \S \Pu \P
(\K)\:=\:\S \Pu \Pw (\K)\:=\:\\\:=\:\S \Ls \P (\K)\:=\:\Ls \S \P
(\K)\:=\:\Ls \Ps (\K)\:=\:\Ld \S \P (\K).
\end{gather*}

The first one of these identities is due to
Malcev~\cite[\textsection 11, Theorem~4]{Malcev}, and the others
are due to Gorbunov~\cite[Corollary~2.3.4,
Theorem~2.3.6]{Gorbunov}.

\bigskip

We demonstrate an application of Unification Theorems on the
following example.

\begin{example}
Algebraic geometry over the additive monoid of natural numbers has
been studied by P.\,Morar and A.\,Shevlyakov~\cite{MorShev,
Shevl2, Shevl3}. Authors consider $\mathbb{N}$ in several
signatures $\L$. We discuss here the simplest case from these
papers.

Let $\L=\{+, 0\}$ be the basic signature with binary function
$"+"$ and constant $"0"$, and $\N= \langle \mathbb{N}; +,
0\rangle$ the additive monoid of natural numbers in the language
$\L$ with obvious interpretation of symbols from $\L$.

A commutative $\L$-monoid $M$ is called {\it positive} if $x+y=0$
implies $x=y=0$ for all $x,y\in M$. Monoid $M$ is named {\it
monoid with cancelation} if $x+z=y+z$ implies $x=y$ for all
$x,y,z\in M$.

\noindent {\bf Theorem}~\cite{MorShev}{\bf.} {\em For any finitely
generated $\L$-monoid $M$ the following condition are equivalent:
\begin{enumerate}
\item $M$ is the coordinate monoid of an algebraic
set over $\N$;
\item $M$ is separated by $\N$;
\item $M$ is a commutative positive monoid with cancelation;
\item quasi-identities
\begin{gather*}
\forall \; x,y \quad (x+y=y+x),\\
\forall\; x,y \quad (x+y=0\;\to\;x=0),\\
\forall \; x,y,z \quad (x+z=y+z\;\to\;x=z)
\end{gather*}
hold in $M$;
\item $M$ is in the $\ucl (\N)$;
\item $M$ is discriminated by $\N$;
\item $M$ is the coordinate monoid of an irreducible algebraic
set over $\N$.
\end{enumerate}
}

\noindent {\bf Corollary 1.} {\em Every algebraic set over $\N$ is
irreducible.}

\noindent {\bf Corollary 2.} {\em $\ucl(\N)=\qvar(\N)$.}
\end{example}

\section{Open problems and questions}\label{sec:problems}

We do hope that this series of papers on universal algebraic
geometry will be a helpful guide for creating algebraic geometry
over classical algebraic structures. In the nearest future we plan
to publish the papers ``Equationally Noetherian property and
compactness''~\cite{DMR3} and ``Equational domains and
co-domains''~\cite{DMR4} along this series.

As it has been mentioned in the introduction nowadays we know well the
structure of algebraic sets and coordinate groups over a free
group $F$ of finite rank: irreducible coordinate groups over $F$ are
finitely generated limit groups over $F$. In our view there is
 sufficient background for studying algebraic geometry for
other classical algebraic structures.

\subsection{Free semigroup (monoid)}

Let $S$ be a free non-abelian semigroup or a free monoid.

There is well-known result due to G.\,Makanin that the problem of
solvability for system of equations over a free semigroup is
algorithmically solvable. Also there exist works on evaluation of
complexity for such algorithm. Nevertheless, we still do not know
a good description of sets of all solutions for system of
equations over $S$ (even for quadratic equations!) and their
coordinate semigroups.

Since, by Unification Theorem~A, the notion of coordinate algebra
is equivalent to the notion of limit algebra for equationally
Noetherian algebraic structures, then we formulate the following
problem.

\begin{problem}
It is interesting to find a description of limit semigroups over
free non-abelian semigroup $S$.
\end{problem}

\subsection{Free Lie algebra}

Let $L$ be the free Lie algebra of a finite rank $r\geqslant 2$ over
a field $k$.

\begin{problem}
It is interesting to develop general techniques for solving
equations over $L$ and develop the algebraic geometry over $L$.
\end{problem}

The guidelines for solution of this problem have been set up in
papers~\cite{BMR1,DMR1,MR2}. In the paper~\cite{Daniyarova3}
E.\,Daniyarova and V.\,Remeslennikov have produced results which
are specific for the free Lie algebra $L$ (see Example~\ref{Lie}
of current paper). So-called bounded algebraic geometry over free
Lie algebra $L$ has been completely examined
in~\cite{Daniyarova3}. It turns out that the algebraic geometry
over $L$ contains totally Diophantine algebraic geometry of the
ground field $k$.

It is well-known that quadratic equations, their algebraic sets
and coordinate groups have played a significative role in creation
of the algebraic geometry over the free group $F$. In our view in
the case of free Lie algebra linear equations may play a similar
role.

By ${\rm U}(L)$ we denote the universal enveloping algebra of $L$.
The algebra $L$ posses the natural structure of ${\rm
U}(L)$-module.

\begin{definition}
An equation of the form
$$
x_1\varphi_1+x_2\varphi_2+\ldots+x_n\varphi_n=w,
$$
where $w\in L$ and $\varphi_i\in {\rm U}(L)$, $i=\overline{1,n}$,
is called {\em linear} equation over $L$.
\end{definition}

Let us note that every expression $x\varphi$ ($\varphi\in {\rm
U}(L)$) may be written as a sum of terms in the form
$$
[\ldots[[x,v_1],v_2],\ldots v_m], \quad v_1,\ldots,v_n\in L.
$$

V.\,Remeslennikov and R.\,St\"{o}hr in their
paper~\cite{Rem-Shtor3} have demonstrated that the structure of
the solution of such simple an equation as $[x,a]+[y,b]=0$,
$a,b\in L$, $a\ne b$, is complicated. However, the coordinate
algebra for the equation $[x,a]+[y,b]=0$ may be calculated quite
easy.

\begin{problem}
It is interesting to develop a specific techniques for solving
linear equations over $L$, find corresponding algebraic sets and
coordinate algebras.
\end{problem}

\subsection{Free associative algebra}

Let $A$ be a free associative algebra of a finite rank $r\geqslant
2$ over a field $k$.

We know almost nothing about solutions of system of equations over
$A$. Thus we present the following ``testing problem'' for
realizing the algebraic geometry over $A$.

\begin{problem}
It is interesting to develop the bounded algebraic geometry over
$A$ in such a manner as it has been done over free Lie algebra
$L$.
\end{problem}

\subsection{Equationally Noetherian property}

Unification Theorems exhibit that the most perspective algebras
for investigation into algebraic geometry are equationally
Noetherian algebras. Thus we present the following open problems
about equationally Noetherian property for some classical
algebras.

\begin{problem}
Is the free non-abelian Lie algebra of a finite rank over a field
equationally Noetherian or not?
\end{problem}

\begin{problem}
Is the free non-abelian associative algebra of a finite rank over
a field equationally Noetherian or not?
\end{problem}

\begin{problem}
When the free product of equationally Noetherian groups is
equationally Noetherian?
\end{problem}

\bigskip

\bigskip

\bigskip

{\footnotesize \textsc{Evelina Daniyarova, Institute of
Mathematics
SB RAS, }\\
644099, Pevtsova 13, Omsk, Russia,\\
GSM: +79136304852\\
\texttt{evelina.omsk@list.ru}

\bigskip

\textsc{Alexei Myasnikov, Schaefer School of Engineering and
Science,
Department of Mathematical Sciences, Stevens Institute of Technology}\\
Castle Point on Hudson, Hoboken NJ 07030-5991, USA\\
+12012168598\\
\texttt{amiasnikov@gmail.com}

\bigskip

\textsc{Vladimir Remeslennikov, Institute of Mathematics SB RAS,}\\
644099, Pevtsova 13, Omsk, Russia,\\
GSM: +79136173632\\
\texttt{remesl@ofim.oscsbras.ru}
\end{document}